\newcommand\addvmargin[1]{
\node[fit=(current bounding box),inner ysep=#1,inner xsep=0]{};
}
\newtheorem{thm}{\sc Theorem.}[section]
\newtheorem{lem}[thm]{\sc Lemma.}
\newtheorem{rem}[thm]{\sc Remark.}
\renewcommand{\theequation}{\arabic{section}.\arabic{equation}}
\newenvironment{AMS}%
{{\upshape\bfseries AMS subject classifications. }\ignorespaces}{}
\newenvironment{keywords}{{\upshape\bfseries Key words. }\ignorespaces}{}
\newcommand{\bRgeq}{{\mathbb R}_{\geq 0}}
\newcommand{\RZ}{{\mathbb R} \slash {\mathbb Z}}
\newcommand{\bR}{{\mathbb R}}
\newcommand{\bN}{{\mathbb N}}
\newcommand{\bZ}{\mathbb{Z}}
\newcommand{\spa}{\operatorname{span}}
\newcommand{\Gauss}{{\mathcal{K}}}
\newcommand{\doctorkappa}{\mathfrak{K}}
\newcommand{\ratio}{{\mathfrak r}}
\newcommand{\dH}[1]{\;{\rm d}{\mathcal{H}}^{#1}} 
\newcommand{\drho}{\;{\rm d}\rho}
\newcommand{\Vh}{\underline{V}^h}
\newcommand{\Vhpartial}{\underline{V}^h_\partial}
\newcommand{\Vhpartialzero}{\underline{V}^h_{\partial_0}}
\newcommand{\Vpartial}{\underline{V}_\partial}
\newcommand{\Vpartialzero}{\underline{V}_{\partial_0}}
\newcommand{\Wpartialzero}{L^2(I)}
\newcommand{\vecWpartialzero}{[L^2(I)]^2}
\newcommand{\Whpartialzero}{W^h_{(\partial_0)}}
\newcommand{\vecWhpartialzero}{\underline{W}_{(\partial_0)}^h}
\newcommand{\dd}[1]{\frac{\rm d}{{\rm d}#1}}
\newcommand{\ddt}{\dd{t}}
\newcommand{\ek}{e}
\newcommand{\ttau}{\Delta t}
\newcommand{\sliprho}{\widehat\varrho_{\partial\mathcal{S}}}
\newcommand{\BGNmckappa}{\mathcal{A}}
\newcommand{\GDmckappa}{\mathcal{B}}
\newcommand{\BGNmc}{\mathcal{C}}
\newcommand{\GDmc}{\mathcal{D}}
\newcommand{\normal}{{\rm n}}
\def\epsilon{\varepsilon} 
\newcommand{\mint}{\textstyle\mints\displaystyle}
\newcommand{\mints}{\int\!\!\!\!\!{\rm-}}
\newcommand{\errorXx}{\|\Gamma - \Gamma^h\|_{L^\infty}}
\begin{document}
\title{
Variational discretization of \\ axisymmetric curvature flows
}
\author{John W. Barrett\footnotemark[2] \and 
        Harald Garcke\footnotemark[3]\ \and 
        Robert N\"urnberg\footnotemark[2]}

\renewcommand{\thefootnote}{\fnsymbol{footnote}}
\footnotetext[2]{Department of Mathematics, 
Imperial College London, London, SW7 2AZ, UK}
\footnotetext[3]{Fakult{\"a}t f{\"u}r Mathematik, Universit{\"a}t Regensburg, 
93040 Regensburg, Germany}

\date{}

\maketitle

\begin{abstract}
We present natural axisymmetric variants of schemes
for curvature flows introduced earlier by the present
authors and analyze them in detail. 
Although numerical methods for geometric flows have
been used frequently in axisymmetric settings, numerical analysis
results so far are rare.
In this paper, we present stability, equidistribution, existence and uniqueness
results for the introduced approximations. 
Numerical computations show that these schemes are
very efficient in computing numerical solutions of geometric flows
as only a spatially one-dimensional problem has to be solved. The good
mesh properties of the schemes also allow them to compute in very complex
axisymmetric geometries.
\end{abstract} 

\begin{keywords} mean curvature flow, axisymmetry, parametric finite
elements, Gauss curvature flow, stability, equidistributed meshes
\end{keywords}

\begin{AMS} 65M60, 65M12, 53C44, 35K55
\end{AMS}
\renewcommand{\thefootnote}{\arabic{footnote}}

\setcounter{equation}{0}
\section{Introduction} \label{sec:intro}
Numerical approximations of curvature flows such as the mean curvature
flow and the Gauss curvature flow have been studied intensively 
during the last 30 years. In many situations the axisymmetry of these
geometric flows can be used to reduce the dimension of the governing 
equations, and so numerical methods have been used frequently in such 
axisymmetric settings. However, results on the numerical analysis
of such schemes so far are rare. 
In this paper we present parametric finite element
approximations for axisymmetric curvature flows, and carefully analyse their
properties.

In general, in 
curvature driven evolution equations the normal velocity
of a hypersurface in 
$\bR^3$ is given by an expression involving the mean and/or the Gauss
curvature of the surface. Evolving surfaces are of interest in geometry,
and they can appear in application areas such as materials science, for example
as grain boundaries. In addition, evolution laws involving the curvature 
of the surface arise in situations, where surface quantities are coupled
to the surrounding volume by additional fields, which for example arises
in the evolution of phase boundaries or in two-phase flow. In any case
solving the evolution law for the surface with a stable discretization
of curvature is a corner stone of a reliable and efficient numerical
method.

Approaches to solve surface evolution equations numerically involve
different descriptions of the evolving surface. Traditionally level set
methods, phase field methods or parametric front tracking methods have
been used.
For example, parametric finite element approximations of curvature flows
have been considered in \cite{Dziuk91,gflows3d,ElliottF17,KovacsLL18arxiv}.
We refer to the review paper \cite{DeckelnickDE05}, and the references
therein, for further information on numerical methods for general geometric
evolution equations.

In this paper we aim to numerically compute a family of hypersurfaces
 $(\mathcal{S}(t))_{t \geq 0} \subset \bR^3$, which we
later assume to be axisymmetric, and which fulfills a geometric
evolution law involving its principal curvatures. We will focus on
the mean curvature flow, which for $\mathcal{S}(t)$ is given by the 
evolution law
\begin{equation} \label{eq:mcfS}
\mathcal{V}_{\mathcal{S}} = k_m\qquad\text{ on } \mathcal{S}(t)\,,
\end{equation}
and which is the $L^2$--gradient flow for $\mathcal{H}^2(\mathcal{S}(t))$, 
since
\begin{equation*} 
\ddt\, \mathcal{H}^2(\mathcal{S}(t))
= - \int_{\mathcal{S}(t)} k_m\,\mathcal{V}_{\mathcal{S}} \dH{2}
= - \int_{\mathcal{S}(t)} (\mathcal{V}_{\mathcal{S}})^2 \dH{2} 
\end{equation*}
for surfaces without boundary.
Here $\mathcal{V}_{\mathcal{S}}$ denotes the normal velocity of 
$\mathcal{S}(t)$ in the direction of the normal $\vec\normal_{\mathcal{S}}$.
Moreover, $k_m$ is the mean curvature of $\mathcal{S}(t)$, i.e.\ the sum of the
principal curvatures of $\mathcal{S}(t)$, see \cite{Mantegazza11} 
for an introduction to the mean curvature flow.

We also consider the nonlinear mean curvature flow
\begin{equation} \label{eq:nlmcf}
\mathcal{V}_{\mathcal{S}} = f(k_m)
\quad\text{on }\ \mathcal{S}(t)\,,
\end{equation}
where $f:(a,b)\rightarrow\mathbb{R}$ with $-\infty\leq a<b\leq\infty$,
is a strictly monotonically increasing continuous function, as well as the
volume preserving variant
\begin{equation} \label{eq:nlmcf2}
\mathcal{V}_{\mathcal{S}} = f(k_m)
- \frac{\int_{\mathcal{S}} f(k_m) \dH{2}}{\int_{\mathcal{S}} 1 \dH{2}}
\quad\text{on }\ \mathcal{S}(t)\,.
\end{equation}
Possible choices for $f$ are
\begin{subequations}
\begin{equation}\label{eq:fbeta}
f(r) =|r|^{\beta-1}r, \quad \beta\in\mathbb{R}_{>0}\,,
\end{equation}
or
\begin{equation}\label{eq:fimcf}
f(r) = - r^{-1} 
\end{equation}
for the inverse mean curvature flow. These two choices have applications 
for example in image processing or in general relativity, see
\cite{MikulaS01,HuiskenI01} and the references therein. 
Of course, (\ref{eq:nlmcf}) with
\begin{equation} \label{eq:fmcf}
f(r) = r
\end{equation}
\end{subequations}
collapses to (\ref{eq:mcfS}). 

If $\Omega(t)$ denotes the region enclosed by $\mathcal{S}(t)$, i.e.\
$\mathcal{S}(t) = \partial\Omega(t)$, then the flow (\ref{eq:nlmcf2}) is such
that
\begin{equation} \label{eq:dVdt}
\ddt\,\mathcal{L}^3(\Omega(t)) = 
\int_{\mathcal{S}(t)} \mathcal{V}_{\mathcal{S}} \dH{2}
 = 0\,,
\end{equation}
where here we assume that 
$\vec\normal_{\mathcal{S}}$ is the outer normal
to $\Omega(t)$ on $\mathcal{S}(t)$. This justifies the expression volume
preserving flow. These flows are of interest in geometry and we refer to
\cite{Huisken87,Athanassenas97,Cabezas-RivasS10,Hartley16} 
for more information.

More generally, we can also consider flows of the form
\begin{equation} \label{eq:Fmg}
\mathcal{V}_{\mathcal{S}} = F(k_m, k_g) = F(k_1 + k_2, k_1\,k_2)
\quad\text{on }\ \mathcal{S}(t)\,,
\end{equation}
where $k_g = k_1\,k_2$ denotes the Gauss curvature of $\mathcal{S}(t)$,
with $k_1$ and $k_2$ the two principal curvatures. 
Of course, (\ref{eq:Fmg}) with $F(r,s) = f(r)$ reduces to (\ref{eq:nlmcf}). On
the other hand, the choice $F(r,s) = -s$, for closed surfaces, leads to 
the Gauss curvature flow
\begin{equation} \label{eq:Gaussflow}
\mathcal{V}_{\mathcal{S}} = - k_g \quad\text{on }\ \mathcal{S}(t)\,,
\end{equation}
see e.g.\ \cite[(1.14)]{willmore}, where in (\ref{eq:Gaussflow}) we again
assume that $\vec\normal_{\mathcal{S}}$ is the outer normal
to $\Omega(t)$ on $\mathcal{S}(t)$. Such flows have found considerable
interest in geometry recently and we refer to
\cite{Gerhardt90,McCoyMW14,McCoyMW15,Urbas90} for more information.
One reason why the Gauss curvature flow is of particular interest, 
is because this flow allows to study the fate of the rolling stones, 
see \cite{Andrews99}.  

In this paper, we consider the case that $\mathcal{S}(t)$ is 
an axisymmetric surface, that is rotationally symmetric with respect to the
$x_2$--axis. We further assume that $\mathcal{S}(t)$ is made up of a single 
connected component, with or without boundary. Clearly, in the latter case 
the boundary $\partial\mathcal{S}(t)$ of $\mathcal{S}(t)$ consists of either
one or two circles that each lie within a hyperplane that is parallel to the
$x_1-x_3$--plane. For the evolving family of surfaces we allow for the
following types of boundary conditions. A boundary circle may assumed to be
fixed, it may be allowed to move vertically along the boundary of a fixed
infinite cylinder that is aligned with the axis of rotation, 
or it may be allowed to expand and shrink within a 
hyperplane that is parallel to the $x_1-x_3$--plane. 
Depending on the postulated free energy, certain
angle conditions will arise where $\mathcal{S}(t)$ meets the external 
boundary. If the free energy is just surface area,
$\mathcal{H}^2(\mathcal{S}(t))$, then a $90^\circ$ degree contact angle
condition arises. We refer to Section~\ref{sec:1} below for further details,
in particular with regard to more general contact angles.

The dimensionally reduced formulation has several severe advantages both 
analytically as well as numerically. In analysis it has been used for
example to study the onset of singularities, see 
\cite{Huisken90,DziukK91,Matioc07,LeCrone14,McCoyMW15} and other
singularity formation mechanisms, see \cite{BasaSS94,BernoffBW98}. 
Numerically it leads to equations
which are far easier to solve and at the same time problems with the
mesh topology do not occur. Therefore, axisymmetric settings have been
frequently used for numerical computations of surface evolutions.
For example, graph formulations for axisymmetric geometric evolution laws
have been considered in \cite{ColemanFM96,DeckelnickDE03,DeckelnickS10}, 
while a finite difference approximation of a parametric description for
the evolution of general axisymmetric surfaces has been studied in
\cite{MayerS02}. Hence the latter is closely related to the presented
work, although we stress that it does not contain any numerical analysis.
Moreover, also more complex problems such as for example two phase flows or
biomembranes, in which also curvature effects play a role, have been
treated in an axially symmetric setting. We refer to 
\cite{GanesanT08,VeerapaneniGBZ09,HuKL14,CoxL15,Zhao17preprint}, and we expect
that our approach will have an impact on such more complex evolutions as
well.
In terms of the numerical analysis for
the approximation of axisymmetric surface evolutions only very few results have
appeared in the literature so far, see e.g.\ 
\cite{DeckelnickDE03,DeckelnickS10} in the context of a graph formulation
for the higher order curvature flows surface diffusion and Willmore flow, 
respectively.
To the best of our knowledge, our paper contains the first stability
results for fully discrete approximations of axisymmetric mean curvature flow.
In addition, we consider the numerical analysis of approximations
for axisymmetric higher order flows, such as surface diffusion and 
Willmore flow, in the recently appeared article \cite{axisd}.

The present authors in the last ten years introduced parametric finite
element methods
for geometric evolution equations which have the property that the mesh
generically behaves well during the evolution. We also refer to the recent
work \cite{ElliottF17} for a method which also leads to good meshes.
This is an advantage compared to earlier front tracking approaches in which 
often the meshes degenerated during the evolution such that the computations
had to be stopped. In a series of papers, 
\cite{triplej,triplejMC,gflows3d,willmore,ejam3d}, we were able to analyze 
mesh properties and showed stability results. In particular, in two
dimensions a semi-discrete version of the method led to
equidistribution of mesh points.
In this paper we introduce a parametric finite element method for
the axisymmetric formulations of the surface evolution equations
discussed above relying on ideas of our earlier work. However, a lot
of new techniques have to be introduced stemming partly from the fact
that close to the axis of rotation the equations, depending on the formulation,
become either singular or degenerate, and 
partly because one has to decide how to deal with the equidistribution 
property.
We will discuss several ways to handle these issues and will show 
stability, equidistribution, existence and uniqueness
results for the new schemes.

This paper is organised as follows. In Section~\ref{sec:1} we introduce several
weak formulations which will be crucial for the parametric finite
element approximations introduced later.
In Section~\ref{sec:sd} we derive semidiscrete, i.e. continuous in time
discrete in space discretizations, and discuss stability and
equidistribution properties.
Section~\ref{sec:fd} is devoted to fully discrete schemes for which
existence results are shown for linear as well as nonlinear variants, as
well as uniqueness results for linear schemes. In addition, we show stability
for a fully discrete, mildly nonlinear discretization. Finally, we
present several numerical results demonstrating that the majority of
the schemes led to efficient, reliable results for mean curvature flow
as well as for fully nonlinear curvature flows including its mass
preserving variants.

\setcounter{equation}{0}
\section{Weak formulations} \label{sec:1}
\begin{figure}
\center
\newcommand{\AxisRotator}[1][rotate=0]{%
    \tikz [x=0.25cm,y=0.60cm,line width=.2ex,-stealth,#1] \draw (0,0) arc (-150:150:1 and 1);%
}
\begin{tikzpicture}[every plot/.append style={very thick}, scale = 1]
\begin{axis}[axis equal,axis line style=thick,axis lines=center, xtick style ={draw=none}, 
ytick style ={draw=none}, xticklabels = {}, 
yticklabels = {}, 
xmin=-0.2, xmax = 0.8, ymin = -0.4, ymax = 2.55]
after end axis/.code={  
   \node at (axis cs:0.0,1.5) {\AxisRotator[rotate=-90]};
   \draw[blue,->,line width=2pt] (axis cs:0,0) -- (axis cs:0.5,0);
   \draw[blue,->,line width=2pt] (axis cs:0,0) -- (axis cs:0,0.5);
   \node[blue] at (axis cs:0.5,-0.2){$\vec\ek_1$};
   \node[blue] at (axis cs:-0.2,0.5){$\vec\ek_2$};
   \draw[red,very thick] (axis cs: 0,0.7) arc[radius = 70, start angle= -90, end angle= 90];
   \node[red] at (axis cs:0.7,1.9){$\Gamma$};
}
\end{axis}
\end{tikzpicture} \qquad \qquad
\tdplotsetmaincoords{120}{50}
\begin{tikzpicture}[scale=2, tdplot_main_coords,axis/.style={->},thick]
\draw[axis] (-1, 0, 0) -- (1, 0, 0);
\draw[axis] (0, -1, 0) -- (0, 1, 0);
\draw[axis] (0, 0, -0.2) -- (0, 0, 2.7);
\draw[blue,->,line width=2pt] (0,0,0) -- (0,0.5,0) node [below] {$\vec\ek_1$};
\draw[blue,->,line width=2pt] (0,0,0) -- (0,0.0,0.5);
\draw[blue,->,line width=2pt] (0,0,0) -- (0.5,0.0,0);
\node[blue] at (0.2,0.4,0.1){$\vec\ek_3$};
\node[blue] at (0,-0.2,0.3){$\vec\ek_2$};
\node[red] at (0.7,0,1.9){$\mathcal{S}$};
\node at (0.0,0.0,2.4) {\AxisRotator[rotate=-90]};

\tdplottransformmainscreen{0}{0}{1.4}
\shade[tdplot_screen_coords, ball color = red] (\tdplotresx,\tdplotresy) circle (0.7);
\end{tikzpicture}
\caption{Sketch of $\Gamma$ and $\mathcal{S}$, as well as 
the unit vectors $\vec\ek_1$, $\vec\ek_2$ and $\vec\ek_3$.}
\label{fig:sketch}
\end{figure}

Let $\RZ$ be the periodic interval $[0,1]$, and set
\[
I = \RZ\,, \text{ with } \partial I = \emptyset\,,\quad \text{or}\quad
I = (0,1)\,, \text{ with } \partial I = \{0,1\}\,.
\]
We consider the axisymmetric situation, where 
$\vec x(t) : \overline I \to \bR^2$ 
is a parameterization of $\Gamma(t)$. 
Throughout $\Gamma(t)$ represents the generating curve of a
surface $\mathcal{S}(t)$ 
that is axisymmetric with respect to the $x_2$--axis, see
Figure~\ref{fig:sketch}. In particular, on defining
\begin{equation*} 
\vec\Pi_3^3(r, z, \theta) = 
(r \cos\theta, z, r \sin\theta)^T 
\quad\text{for}\quad r\in \bRgeq\,,\ z \in \bR\,,\ \theta \in [0,2\,\pi]
\end{equation*}
and
\begin{equation*} 
\Pi_2^3(r, z) = \{\vec\Pi_3^3(r, z, \theta) : \theta \in [0,2\,\pi)\}\,,
\end{equation*}
we have that
\begin{equation} \label{eq:SGamma}
\mathcal{S}(t)  = 
\bigcup_{(r,z)^T \in \Gamma(t)} \Pi_2^3(r, z)
= \bigcup_{\rho \in \overline I} \Pi_2^3(\vec x(\rho,t))\,.
\end{equation}
Here we allow $\Gamma(t)$ to be either a closed curve, parameterized over
$\RZ$, which corresponds to $\mathcal{S}(t)$ being a genus-1 surface
without boundary.
Or $\Gamma(t)$ may be an open curve, parameterized over $[0,1]$.
Then $\Gamma(t)$ has two endpoints, and each endpoint can either correspond to
an interior point of $\mathcal{S}(t)$, or to a boundary circle of
$\mathcal{S}(t)$. Endpoints of $\Gamma(t)$ that correspond to an interior point
of the surface $\mathcal{S}(t)$ are attached to the $x_2$--axis, 
on which they can freely move up and down. For example, if both endpoints
of $\Gamma(t)$ are attached to the $x_2$--axis,
then $\mathcal{S}(t)$ is a genus-0 surface without boundary.
If only one end of $\Gamma(t)$ is attached to the $x_2$--axis, 
then $\mathcal{S}(t)$ is an open surface with boundary, where the boundary
consists of a single connected component.
If no endpoint of $\Gamma(t)$ is attached to the $x_2$--axis, 
then $\mathcal{S}(t)$ is an open surface with boundary, where the boundary
consists of two connected components. 

In particular, we always assume that, for all $t \in [0,T]$,
\begin{subequations}
\begin{align} 
\vec x(\rho,t) \,.\,\vec\ek_1 & > 0 \quad 
\forall\ \rho \in \overline I\setminus \partial_0 I\,,\label{eq:xpos} \\
\vec x(\rho,t) \,.\,\vec\ek_1 &= 0 \quad 
\forall\ \rho \in \partial_0 I\,,\label{eq:axibc} \\
\vec x_t(\rho,t) \,.\,\vec\ek_i &= 0 \quad 
\forall\ \rho \in \partial_i I \,, \ i =1,2\,,  \label{eq:freeslipbc} \\
\vec x_t(\rho,t) &= \vec 0 \quad 
\forall\ \rho \in \partial_D I \,, \label{eq:noslipbc}
\end{align}
\end{subequations}
where $\partial_D I \cup \bigcup_{i=0}^2 \partial_i I = \partial I$ 
is a disjoint partitioning of $\partial I$, with $\partial_0 I$
denoting the subset of boundary points 
of $I$ that correspond to endpoints of $\Gamma(t)$ attached to the 
$x_2$--axis. Moreover, $\partial_D I \cup \bigcup_{i=1}^2 \partial_i I$ 
denotes the subset of boundary points of $I$ that model components of the
boundary of $\mathcal{S}(t)$. Here endpoints in $\partial_D I$ correspond
to fixed boundary circles of $\mathcal{S}(t)$, 
that lie within a hyperplane parallel to
the $x_1-x_3$--plane $\bR \times \{0\} \times \bR$.
Endpoints in $\partial_1 I$ correspond to boundary circles of $\mathcal{S}(t)$
that can move freely along the boundary of an infinite cylinder
that is aligned with the axis of rotation.
Endpoints in $\partial_2 I$ correspond to boundary circles of $\mathcal{S}(t)$
that can expand/shrink freely within a hyperplane parallel to
the $x_1-x_3$--plane $\bR \times \{0\} \times \bR$.
See Table~\ref{tab:diagram} for a visualization of the different types of 
boundary nodes. 
\begin{table}
\center
\caption{The different types of boundary nodes enforced by 
(\ref{eq:axibc})--(\ref{eq:noslipbc}).}
\begin{tabular}{ccc}
\hline
$\partial I$ & $\partial \Gamma$ & $\partial\mathcal{S}$ \\ \hline
$\partial_0 I$ &
\begin{tikzpicture}[scale=0.5,baseline=40]
\begin{axis}[axis equal,axis line style=thick,axis lines=center, 
xtick style ={draw=none}, ytick style ={draw=none}, xticklabels = {}, 
yticklabels = {}, xmin=-0.1, xmax = 2, ymin = -2, ymax = 2]
\addplot[mark=*,color=blue,mark size=6pt] coordinates {(0,1)};
\draw[<->,line width=3pt,color=red] (axis cs:0.3,0.5) -- (axis cs:0.3,1.5);
\node at (axis cs:0.5,-0.3){\Large$\vec\ek_1$};
\node at (axis cs:-0.3,0.5){\Large$\vec\ek_2$};
\end{axis}
\addvmargin{1mm}
\end{tikzpicture} 
& N/A \\ 
$\partial_D I$ &
\begin{tikzpicture}[scale=0.5,baseline=40]
\begin{axis}[axis equal,axis line style=thick,axis lines=center, 
xtick style ={draw=none}, ytick style ={draw=none}, xticklabels = {}, 
yticklabels = {}, xmin=-0.1, xmax = 2, ymin = -2, ymax = 2]
\addplot[mark=*,color=blue,mark size=6pt] coordinates {(2,1)};
\node at (axis cs:0.5,-0.3){\Large$\vec\ek_1$};
\node at (axis cs:-0.3,0.5){\Large$\vec\ek_2$};
\end{axis}
\addvmargin{1mm}
\end{tikzpicture} 
& 
\begin{tikzpicture}[baseline=0]
\draw[color=blue,thick] (0,0) circle [x radius=2cm, y radius=1cm];
\addvmargin{1mm}
\end{tikzpicture} 
\\ 
$\partial_1 I$ &
\begin{tikzpicture}[scale=0.5,baseline=40]
\begin{axis}[axis equal,axis line style=thick,axis lines=center, 
xtick style ={draw=none}, ytick style ={draw=none}, xticklabels = {}, 
yticklabels = {}, xmin=-0.1, xmax = 2, ymin = -2, ymax = 2]
\addplot[mark=*,color=blue,mark size=6pt] coordinates {(2,1)};
\draw[<->,line width=3pt,color=red] (axis cs:2.3,0.5) -- (axis cs:2.3,1.5);
\draw[thick,color=blue] (axis cs:2,-2) -- (axis cs:2,2);
\node at (axis cs:0.5,-0.3){\Large$\vec\ek_1$};
\node at (axis cs:-0.3,0.5){\Large$\vec\ek_2$};
\end{axis}
\addvmargin{1mm}
\end{tikzpicture} 
& 
\begin{tikzpicture}[baseline=0]
\draw[color=blue,thick] (0,0) circle [x radius=2cm, y radius=1cm];
\draw[<->,color=red,line width=2pt] (2.2,-0.5) -- (2.2,0.5);
\draw[color=blue,thin] (2,-1) -- (2,1);
\draw[color=blue,thin] (-2,-1) -- (-2,1);
\addvmargin{1mm}
\end{tikzpicture} 
\\ 
$\partial_2 I$ &
\begin{tikzpicture}[scale=0.5,baseline=40]
\begin{axis}[axis equal,axis line style=thick,axis lines=center, 
xtick style ={draw=none}, ytick style ={draw=none}, xticklabels = {}, 
yticklabels = {}, xmin=-0.1, xmax = 2, ymin = -2, ymax = 2]
\addplot[mark=*,color=blue,mark size=6pt] coordinates {(2,1)};
\draw[<->,line width=3pt,color=red] (axis cs:1.5,0.7) -- (axis cs:2.5,0.7);
\draw[thick,color=blue] (axis cs:0,1) -- (axis cs:4,1);
\node at (axis cs:0.5,-0.3){\Large$\vec\ek_1$};
\node at (axis cs:-0.3,0.5){\Large$\vec\ek_2$};
\end{axis}
\addvmargin{1mm}
\end{tikzpicture} 
& 
\begin{tikzpicture}[baseline=0]
\draw[color=blue,thick] (0,0) circle [x radius=2cm, y radius=1cm];
\draw[color=blue,thin] (0,0) circle [x radius=1.5cm, y radius=0.75cm];
\draw[color=blue,thin] (0,0) circle [x radius=2.5cm, y radius=1.25cm];
\draw[<->,color=red,line width=2pt] (1.5,0) -- (2.5,0);
\draw[<->,color=red,line width=2pt] (-2.5,0) -- (-1.5,0);
\addvmargin{1mm}
\end{tikzpicture} \\ \hline
\end{tabular}
\label{tab:diagram}
\end{table}%

On assuming that
\begin{equation} \label{eq:xrho}
|\vec x_\rho| \geq c_0 > 0 \qquad \forall\ \rho \in \overline I\,,
\end{equation}
we introduce the arclength $s$ of the curve, i.e.\ $\partial_s =
|\vec{x}_\rho|^{-1}\,\partial_\rho$, and set
\begin{equation} \label{eq:tau}
\vec\tau(\rho,t) = \vec x_s(\rho,t) = 
\frac{\vec x_\rho(\rho,t)}{|\vec x_\rho(\rho,t)|} \qquad \mbox{and}
\qquad \vec\nu(\rho,t) = -[\vec\tau(\rho,t)]^\perp\,,
\end{equation}
where $(\cdot)^\perp$ denotes a clockwise rotation by $\frac{\pi}{2}$.

On recalling (\ref{eq:SGamma}), we observe that the normal
$\vec\normal_{\mathcal{S}}$ on $\mathcal{S}(t)$ is given by
\begin{equation} \label{eq:nuS}
\vec\normal_{\mathcal{S}}(\vec\Pi_3^3(\vec x(\rho,t),\theta)) = 
\begin{pmatrix}
(\vec\nu(\rho,t)\,.\,\vec\ek_1)\,\cos\theta \\
\vec\nu(\rho,t)\,.\,\vec\ek_2 \\
(\vec\nu(\rho,t)\,.\,\vec\ek_1)\,\sin\theta 
\end{pmatrix}
 \quad\text{for}\quad
\rho \in \overline I\,,\ t \in [0,T]\,,\ \theta \in [0,2\,\pi)\,.
\end{equation}
Similarly, the normal velocity $\mathcal{V}_{\mathcal{S}}$ of $\mathcal{S}(t)$ 
in the direction $\vec\normal_{\mathcal{S}}$ is given by
\begin{equation} \label{eq:calVS}
\mathcal{V}_{\mathcal{S}} = \vec x_t(\rho,t)\,.\,\vec\nu(\rho,t) \quad\text{on }
\Pi_2^3(\vec x(\rho,t)) \subset \mathcal{S}(t)\,,
\quad \forall\ \rho \in \overline I\,,\ t \in [0,T]\,.
\end{equation}

For the curvature $\varkappa$ of $\Gamma(t)$ it holds that
\begin{equation} \label{eq:varkappa}
\varkappa\,\vec\nu = \vec\varkappa = \vec\tau_s =
\frac1{|\vec x_\rho|} \left[ \frac{\vec x_\rho}{|\vec x_\rho|} \right]_\rho
.
\end{equation}

An important role in this paper is played by the surface area of the 
surface $\mathcal{S}(t)$, which is equal to
\begin{equation} \label{eq:A}
\mathcal{H}^2(\mathcal{S}(t)) = A(\vec x(t)) = 
2\,\pi\,\int_I \vec x(\rho,t)\,.\,\vec\ek_1\,|\vec x_\rho(\rho,t)|
\drho\,.
\end{equation}
Often the surface area, $A(\vec x(t))$, 
will play the role of the free energy in our paper. 
But for an open surface $\mathcal{S}(t)$, with boundary 
$\partial\mathcal{S}(t)$, 
we consider contact energy contributions which are discussed in
\cite{Finn86}, see also \cite[(2.21)]{ejam3d}. 
In the axisymmetric setting the relevant energy is given by
\begin{equation} \label{eq:E}
E(\vec x(t)) = A(\vec x(t))
+ 2\,\pi\,\sum_{p\in\partial_1 I} 
\sliprho^{(p)}\,(\vec x(p,t)\,.\,\vec\ek_1)\,\vec x(p,t)\,.\,\vec\ek_2
+ \pi\,\sum_{p\in\partial_2 I} 
\sliprho^{(p)}\,(\vec x(p,t)\,.\,\vec\ek_1)^2\,,
\end{equation}
where we recall from (\ref{eq:freeslipbc}) that, for $i=1,2$, either 
$\partial_i I = \emptyset$, $\{0\}$, $\{1\}$ or $\{0,1\}$.
In the above $\sliprho^{(p)} \in \bR$, for $p\in \{0,1\}$, are given constants.
Here $\sliprho^{(p)}$, for $p \in \partial_1 I$, 
denotes the change in contact energy
density in the direction of $-\vec\ek_2$, that the two phases separated by
the interface $\mathcal{S}(t)$ have with the infinite cylinder at
the boundary circle of $\mathcal{S}(t)$ represented by $\vec x(p,t)$. 
Similarly, $\sliprho^{(p)}$, for $p \in \partial_2 I$,
denotes the change in contact energy
density in the direction of $-\vec\ek_1$, that the two phases separated by
the interface $\mathcal{S}(t)$ have with the hyperplane 
$\bR\times\{0\}\times\bR$ 
at the boundary circle of $\mathcal{S}(t)$ represented by
$\vec x(p,t)$. 
These changes in contact energy lead to the contact angle conditions
\begin{subequations}
\begin{align} 
(-1)^p\,\vec\tau(p,t)\,.\,\vec\ek_2 &= \sliprho^{(p)} 
\qquad p \in \partial_1 I\,,\label{eq:mcbc1} \\
(-1)^p\,\vec\tau(p,t)\,.\,\vec\ek_1 &= \sliprho^{(p)} 
\qquad p \in \partial_2 I\,,\label{eq:mcbc2}
\end{align}
\end{subequations}
for all $t \in (0,T]$.
In most cases, the contact energies are assumed to be
the same, so that $\sliprho^{(0)}=\sliprho^{(1)}=0$, which leads to
$90^\circ$ contact angle conditions in (\ref{eq:mcbc1},b), and means that
(\ref{eq:E}) collapses to (\ref{eq:A}).
See \cite{ejam3d} for more details on contact angles and contact energies.
We note that a necessary condition to
admit a solution to (\ref{eq:mcbc1}) or to (\ref{eq:mcbc2}) is that 
\begin{equation} \label{eq:rhobound}
|\sliprho^{(p)}| \leq 1 \qquad p \in \{0,1\}\,.
\end{equation}
In addition, we observe that the energy (\ref{eq:E}) is not bounded from below
if $\sliprho^{(p)} \not=0 $ for $p\in\partial_1 I$ or if
$\sliprho^{(p)}<0$ for $p\in\partial_2 I$.

For later use we note that
\begin{align}
\ddt\, E(\vec x(t)) & = 2\,\pi\,\int_I \left[\vec x_t\,.\,\vec\ek_1
+ \vec x\,.\,\vec\ek_1\,\frac{(\vec x_t)_\rho\,.\,\vec x_\rho}{|\vec x_\rho|^2}
\right] |\vec x_\rho| \drho \nonumber \\ & \qquad
+ 2\,\pi\,\sum_{p\in\partial_1 I} 
\sliprho^{(p)}\left[
(\vec x_t(p,t)\,.\,\vec\ek_1)\,\vec x(p,t)\,.\,\vec\ek_2
+ (\vec x(p,t)\,.\,\vec\ek_1)\,\vec x_t(p,t)\,.\,\vec\ek_2
\right]
\nonumber \\ & \qquad
+ 2\,\pi\,\sum_{p\in\partial_2 I} 
\sliprho^{(p)}\,(\vec x(p,t)\,.\,\vec\ek_1)\,\vec x_t(p,t)\,.\,\vec\ek_1
\,.
\label{eq:dEdt}
\end{align}
Moreover, we recall that expressions for the mean curvature and the 
Gauss curvature of $\mathcal{S}(t)$ are given by
\begin{equation} \label{eq:meanGaussS}
\varkappa_{\mathcal{S}} = 
\varkappa - 
\frac{\vec\nu\,.\,\vec\ek_1}{\vec x\,.\,\vec\ek_1}
\quad\text{and}\quad
\Gauss_{\mathcal{S}} = -
\varkappa\,\frac{\vec\nu\,.\,\vec\ek_1}{\vec x\,.\,\vec\ek_1}
\quad\text{on }\ \overline{I}\,,
\end{equation}
respectively; see e.g.\ \cite[(6)]{CoxL15}. More precisely, if $k_m$ and $k_g$
denote the mean and Gauss curvatures of $\mathcal{S}(t)$, then
\begin{equation*} 
k_m = \varkappa_{\mathcal{S}}(\rho,t) 
\ \text{ and }\
k_g = \Gauss_{\mathcal{S}}(\rho,t) 
\quad\text{on }
\Pi_2^3(\vec x(\rho,t)) \subset \mathcal{S}(t)\,,
\quad \forall\ \rho \in \overline I\,,\ t \in [0,T]\,.
\end{equation*}
In the literature, the two terms making up $\varkappa_{\mathcal{S}}$
in (\ref{eq:meanGaussS}) 
are often referred to as in-plane and azimuthal curvatures,
respectively, with their sum being equal to the mean curvature.
We note that combining (\ref{eq:meanGaussS}) and (\ref{eq:varkappa}) yields 
that
\begin{equation} \label{eq:kappaS}
\varkappa_{\mathcal{S}}\,\vec\nu = 
\vec x_{ss}
- \frac{\vec\nu\,.\,\vec\ek_1}{\vec x\,.\,\vec\ek_1}\,\vec\nu = 
\frac1{|\vec x_\rho|} \left[ \frac{\vec x_\rho}{|\vec x_\rho|} \right]_\rho
- \frac{\vec\nu\,.\,\vec\ek_1}{\vec x\,.\,\vec\ek_1}\,\vec\nu\,.
\end{equation}
It follows from (\ref{eq:kappaS}) and (\ref{eq:tau}) that
\begin{equation} \label{eq:veckappaS}
\vec x\,.\,\vec\ek_1\,\varkappa_{\mathcal{S}}\,\vec\nu = 
(\vec x\,.\,\vec\ek_1)\,\vec x_{ss} - (\vec\nu\,.\,\vec\ek_1)\,\vec\nu 
= ((\vec x\,.\,\vec\ek_1)\,\vec x_s)_s - \vec\ek_1 \,.
\end{equation}
A weak formulation of (\ref{eq:veckappaS}) will form the basis of our stable
approximations for mean curvature flow and surface diffusion.
Clearly, for a smooth surface with bounded mean curvature it follows from
(\ref{eq:kappaS}) that 
\begin{equation} \label{eq:bcnu}
\vec\nu(\rho,t) \,.\,\vec\ek_1 = 0
\qquad \forall\ \rho \in \partial_0 I\,,\quad \forall\ t\in[0,T]\,,
\end{equation}
which, on recalling (\ref{eq:tau}), is clearly equivalent to
\begin{equation} \label{eq:bc}
\vec x_\rho(\rho,t) \,.\,\vec\ek_2 = 0
\qquad \forall\ \rho \in \partial_0 I\,,\quad \forall\ t\in[0,T]\,.
\end{equation}
A precise derivation of (\ref{eq:bc}) in the context of a weak formulation
of (\ref{eq:kappaS}) will be given in the Appendix~\ref{sec:A1}.

\subsection{Mean curvature flow}
In terms of the axisymmetric description of $\mathcal{S}(t)$, the evolution law
(\ref{eq:mcfS}) can be written as
\begin{subequations}
\begin{equation} \label{eq:xt}
\vec x_t\,.\,\vec\nu = \varkappa_{\mathcal{S}} = \varkappa - 
\frac{\vec\nu\,.\,\vec\ek_1}{\vec x\,.\,\vec\ek_1}
\qquad\text{on } I\,,
\end{equation}
with, on recalling (\ref{eq:axibc}--d),
\begin{align} \label{eq:xtbc}
\vec x_t(\rho,t)\,.\,\vec\ek_1 & = 0 \quad
\forall\ \rho\in\partial_0 I\,, \quad 
\vec x_t(\rho,t)\,.\,\vec\ek_i = 0 \quad
\forall\ \rho\in\partial_i I\,,\ i = 1,2\,, \nonumber \\  
\vec x_t(\rho,t) & = \vec 0 \quad \forall\ \rho\in\partial_D I\,, 
\qquad \forall\ t\in[0,T]\,,
\end{align}
\end{subequations}
as well as (\ref{eq:bc}) and (\ref{eq:mcbc1},b).

Let 
\begin{align*} 
\Vpartialzero & = \{ \vec\eta \in [H^1(I)]^2 : \vec\eta(\rho)\,.\,\vec\ek_1 = 0
\quad \forall\ \rho \in \partial_0 I\}\,, \nonumber \\ 
\Vpartial & = \{\eta\in \Vpartialzero : \vec\eta(\rho)\,.\,\vec\ek_i = 0
\quad \forall\ \rho \in \partial_i I\,,\ i=1,2, \quad
\vec\eta(\rho) = \vec 0 \quad \forall\ \rho \in \partial_D I\}\,.
\end{align*}
Then we consider the following weak formulation of (\ref{eq:xt},b),
on recalling (\ref{eq:varkappa}).

$(\BGNmckappa)$:
Let $\vec x(0) \in \Vpartialzero$. For $t \in (0,T]$
find $\vec x(t) \in [H^1(I)]^2$, with
$\vec x_t(t) \in \Vpartial$, and $\varkappa(t)\in L^2(I)$ 
such that
\begin{subequations}
\begin{align}
& \int_I \vec x_t\,.\,\vec\nu\,\chi\,|\vec x_\rho|\drho
= \int_I \left(\varkappa - \frac{\vec\nu\,.\,\vec\ek_1}{\vec x\,.\,\vec\ek_1}
\right) \chi\,|\vec x_\rho| \drho \quad \forall\ \chi \in L^2(I)\,,
\label{eq:xtweak} \\
& \int_I \varkappa\,\vec\nu\,.\,\vec\eta\, |\vec x_\rho| \drho
+ \int_I (\vec x_\rho\,.\,\vec\eta_\rho)\,|\vec x_\rho|^{-1} \drho = 
- \sum_{i=1}^2 
\sum_{p \in \partial_i I} \sliprho^{(p)}\,\vec\eta(p)\,.\,\vec\ek_{3-i}
\quad \forall\ \vec\eta \in \Vpartial\,.
\label{eq:varkappaweak}
\end{align}
\end{subequations}
We note that (\ref{eq:varkappaweak}) weakly imposes (\ref{eq:bc})
and (\ref{eq:mcbc1},b).
We observe that (\ref{eq:xt}) degenerates for $\vec x\,.\,\vec\ek_1 = 0$,
i.e.\ when $\rho \in \partial_0 I$. Hence this degeneracy is balanced by
the condition (\ref{eq:bcnu}). In fact, 
on recalling (\ref{eq:varkappa}) it holds that
\begin{align}
\lim_{\rho\to \rho_0} 
\frac{\vec\nu(\rho,t)\,.\,\vec\ek_1}{\vec x(\rho,t)\,.\,\vec\ek_1} &
= \lim_{\rho\to \rho_0} 
\frac{\vec\nu_\rho(\rho,t)\,.\,\vec\ek_1}{\vec x_\rho(\rho,t)\,.\,\vec\ek_1}
= \vec\nu_s(\rho_0,t)\,.\,\vec\tau(\rho_0,t) 
= -\varkappa(\rho_0,t)
\nonumber \\ & \hspace{6cm}
\quad \forall\ \rho_0\in\partial_0 I\,,\
\forall\ t \in [0,T]\,.
\label{eq:bclimit}
\end{align}
We remark that the weak formulation $(\BGNmckappa)$ is close in spirit to the
weak formulations introduced in \cite{triplejMC,gflows3d} for mean curvature
flow. In particular, the tangential component of $\vec x_t$ is not prescribed,
which on the discrete level leads to an equidistribution property.

Choosing $\vec\eta = (\vec x\,.\,\vec\ek_1)\,\vec x_t \in \Vpartial$
in (\ref{eq:varkappaweak})
and $\chi = (\vec x\,.\,\vec\ek_1)\,(\vec x_t\,.\,\vec\nu)$ in 
(\ref{eq:xtweak}), 
we obtain on recalling (\ref{eq:dEdt}), $\vec x_t \in \Vpartial$, 
(\ref{eq:tau}) and (\ref{eq:xpos}) that
\begin{align}
& \ddt\, E(\vec x(t)) \nonumber \\ & \quad
= 2\,\pi\,\int_I \left[\vec x_t\,.\,\vec\ek_1
+ \vec x\,.\,\vec\ek_1\,\frac{(\vec x_t)_\rho\,.\,\vec x_\rho}{|\vec x_\rho|^2}
\right] |\vec x_\rho| \drho 
+ 2\,\pi \sum_{i=1}^2 \sum_{p\in\partial_i I} 
\sliprho^{(p)}\,(\vec x(p,t)\,.\,\vec\ek_1)\,\vec x_t(p,t)\,.\,\vec\ek_{3-i}
\nonumber \\ & \quad
= 2\,\pi\,\int_I \vec x_t\,.\left[\vec\ek_1
- (\vec\ek_1\,.\,\vec\tau)\,\vec\tau
\right] |\vec x_\rho| \drho
- 2\,\pi\,\int_I (\vec x\,.\,\vec\ek_1)\,\varkappa\,\vec\nu\,.\,
\vec x_t\, |\vec x_\rho| \drho
\nonumber \\ & \quad
= 2\,\pi\,\int_I 
(\vec x_t\,.\,\vec\nu)\,\vec\ek_1\,.\,\vec\nu\,|\vec x_\rho| \drho
- 2\,\pi\,\int_I (\vec x\,.\,\vec\ek_1)\,\varkappa\,
\vec x_t\,.\,\vec\nu\, |\vec x_\rho| \drho
\nonumber \\ & \quad
= - 2\,\pi\,\int_I \vec x\,.\,\vec\ek_1
\left[\varkappa - 
\frac{\vec\nu\,.\,\vec\ek_1}{\vec x\,.\,\vec\ek_1}\right]
\vec x_t\,.\,\vec\nu\, |\vec x_\rho| \drho
= - 2\,\pi\,\int_I \vec x\,.\,\vec\ek_1\left(
\vec x_t\,.\,\vec\nu\right)^2 |\vec x_\rho| \drho \leq 0\,.
\label{eq:gradflow}
\end{align}

An alternative strong formulation of mean curvature flow, in the axisymmetric
setting, to (\ref{eq:xt}) is given by
\begin{equation} \label{eq:mcdziuk}
\vec x_t = \vec\varkappa - 
\frac{\vec\nu\,.\,\vec\ek_1}{\vec x\,.\,\vec\ek_1}\,\vec\nu\,,
\end{equation}
with (\ref{eq:xtbc}), where we have recalled (\ref{eq:varkappa}).
We consider the following weak formulation of (\ref{eq:mcdziuk}). 

$(\GDmckappa)$:
Let $\vec x(0) \in \Vpartialzero$. For $t \in (0,T]$
find $\vec x(t) \in [H^1(I)]^2$, with $\vec x_t(t) \in \Vpartial$, and 
$\vec\varkappa(t)\in [L^2(I)]^2$ such that
\begin{subequations}
\begin{align}
& \int_I \vec x_t\,.\,\vec\chi\,|\vec x_\rho|\drho
= \int_I \left(\vec\varkappa\,.\,\vec\chi - 
\frac{\vec\nu\,.\,\vec\ek_1}{\vec x\,.\,\vec\ek_1}\,\vec\nu\,.\,
\vec\chi\right)|\vec x_\rho| \drho \qquad \forall\ 
\vec\chi \in [L^2(I)]^2\,,
\label{eq:Dziuka} \\
& \int_I \vec\varkappa\,.\,\vec\eta\, |\vec x_\rho| \drho
+ \int_I (\vec x_\rho\,.\,\vec\eta_\rho)\,|\vec x_\rho|^{-1} \drho =
- \sum_{i=1}^2 
 \sum_{p \in \partial_i I} \sliprho^{(p)}\,\vec\eta(p)\,.\,\vec\ek_{3-i}
\qquad \forall\ \vec\eta \in \Vpartial\,.
\label{eq:Dziukb}
\end{align}
\end{subequations}
Similarly to (\ref{eq:varkappaweak}), we observe that
(\ref{eq:Dziukb}) weakly imposes (\ref{eq:bc}) and (\ref{eq:mcbc1},b). 
We remark that the weak formulation $(\GDmckappa)$ in some sense
is close in spirit to the 
weak formulations introduced in \cite{Dziuk91,Dziuk94} for mean curvature
flow. In particular, the tangential component of $\vec x_t$ is fixed to
be zero, as the right hand side of (\ref{eq:Dziuka}) is normal, recall
(\ref{eq:varkappa}). 

Choosing $\vec\chi = \vec\eta = (\vec x\,.\,\vec\ek_1)\,\vec x_t \in \Vpartial$
in (\ref{eq:Dziuka},b), we obtain, similarly to (\ref{eq:gradflow}), that
\begin{align}
& \ddt\, E(\vec x(t)) \nonumber \\ & \quad
= 2\,\pi\,\int_I \left[\vec x_t\,.\,\vec\ek_1
+ \vec x\,.\,\vec\ek_1\,\frac{(\vec x_t)_\rho\,.\,\vec x_\rho}{|\vec x_\rho|^2}
\right] |\vec x_\rho| \drho 
+ 2\,\pi\sum_{i=1}^2
\sum_{p\in\partial_i I} 
\sliprho^{(p)}\,(\vec x(p,t)\,.\,\vec\ek_1)\,\vec x_t(p,t)\,.\,\vec\ek_{3-i}
\nonumber \\ & \quad
= 2\,\pi\,\int_I 
(\vec x_t\,.\,\vec\nu)\,\vec\ek_1\,.\,\vec\nu\,|\vec x_\rho| \drho
- 2\,\pi\,\int_I (\vec x\,.\,\vec\ek_1)\,\vec\varkappa\,.\,
\vec x_t\, |\vec x_\rho| \drho
\nonumber \\ & \quad
= - 2\,\pi\,\int_I \vec x\,.\,\vec\ek_1
\left[\vec\varkappa - 
\frac{\vec\nu\,.\,\vec\ek_1}{\vec x\,.\,\vec\ek_1}\,\vec\nu\right] .\,
\vec x_t\, |\vec x_\rho| \drho
= - 2\,\pi\,\int_I \vec x\,.\,\vec\ek_1\,|\vec x_t|^2 |\vec x_\rho| \drho
\leq 0\,.
\label{eq:GDgradflow}
\end{align}

We remark that it does not appear possible to mimic either 
(\ref{eq:gradflow}) for $(\BGNmckappa)$
or (\ref{eq:GDgradflow}) for $(\GDmckappa)$ on the
discrete level. 
Hence, in order to develop stable approximations, we
investigate alternative formulations based on (\ref{eq:veckappaS}).
The first formulation corresponds to the strong formulation
$(\vec x\,.\,\vec\ek_1)\,\vec x_t\,.\,\vec\nu = \vec
x\,.\,\vec\ek_1\,\varkappa_{\mathcal{S}}$, together with (\ref{eq:veckappaS}).

$(\BGNmc)$:
Let $\vec x(0) \in \Vpartialzero$. For $t \in (0,T]$
find $\vec x(t) \in [H^1(I)]^2$, with $\vec x_t(t) \in \Vpartial$, and 
$\varkappa_{\mathcal{S}}(t) \in \Wpartialzero$ such that
\begin{subequations}
\begin{align}
& \int_I (\vec x \,.\,\vec\ek_1)\,\vec x_t\,.\,\vec\nu\,
\chi\,|\vec x_\rho|\drho
= \int_I \vec x \,.\,\vec\ek_1\,\varkappa_{\mathcal{S}}\,\chi\,
|\vec x_\rho|\drho \qquad \forall\ \chi \in \Wpartialzero\,,
\label{eq:bgnnewa} \\
& \int_I \vec x \,.\,\vec\ek_1\,\varkappa_{\mathcal{S}}\,\vec\nu\,.\,
\vec\eta\,|\vec x_\rho|\drho
+ \int_I \left[\vec\eta \,.\,\vec\ek_1
+ \vec x\,.\,\vec\ek_1 \,\frac{\vec x_\rho\,.\,\vec\eta_\rho}{|\vec x_\rho|^2}
\right] |\vec x_\rho| \drho 
\nonumber \\ & \hspace{4cm}
= - \sum_{i=1}^2 \sum_{p \in \partial_i I} \sliprho^{(p)}\,
(\vec x(p,t)\,.\,\vec\ek_1)\,\vec\eta(p)\,.\,\vec\ek_{3-i}
\qquad \forall\ \vec\eta \in \Vpartial\,.
\label{eq:bgnnewb} 
\end{align}
\end{subequations}
The second formulation corresponds to the strong formulation
$(\vec x\,.\,\vec\ek_1)\,\vec x_t = \vec
x\,.\,\vec\ek_1\,\vec\varkappa_{\mathcal{S}}$, 
where $\vec\varkappa_{\mathcal{S}} = \varkappa_{\mathcal{S}}\,\vec\nu$,
together with (\ref{eq:veckappaS}).

$(\GDmc)$:
Let $\vec x(0) \in \Vpartialzero$. For $t \in (0,T]$
find $\vec x(t) \in [H^1(I)]^2$, with $\vec x_t(t) \in \Vpartial$, 
and $\vec\varkappa_{\mathcal{S}}(t) \in \vecWpartialzero$ such that
\begin{subequations}
\begin{align}
& \int_I (\vec x \,.\,\vec\ek_1)\,\vec x_t\,.\,\vec\chi\,|\vec x_\rho|\drho
= \int_I (\vec x \,.\,\vec\ek_1)\,\vec\varkappa_{\mathcal{S}}\,.\,\vec\chi\,
|\vec x_\rho|\drho \qquad\forall\ \vec\chi \in \vecWpartialzero\,,
\label{eq:Dziuknewa} \\ &
\int_I (\vec x \,.\,\vec\ek_1)\,\vec\varkappa_{\mathcal{S}}\,.\,
\vec\eta\,|\vec x_\rho|\drho
+ \int_I \left[\vec\eta \,.\,\vec\ek_1
+ \vec x\,.\,\vec\ek_1
\,\frac{\vec x_\rho\,.\,\vec\eta_\rho}{|\vec x_\rho|^2}
\right] |\vec x_\rho| \drho 
\nonumber \\ & \hspace{4cm}
= - \sum_{i=1}^2 \sum_{p \in \partial_i I} \sliprho^{(p)}\,
(\vec x(p,t)\,.\,\vec\ek_1)\,\vec\eta(p)\,.\,\vec\ek_{3-i}
\qquad \forall\ \vec\eta \in \Vpartial\,.
\label{eq:Dziuknewb} 
\end{align}
\end{subequations}
We note that the variational formulation for $\vec\varkappa_{\mathcal{S}}$
in (\ref{eq:Dziuknewb}) has previously been employed in 
\cite[p.\ 124]{GanesanT08}. 

Choosing $\vec\eta = \vec x_t \in \Vpartial$ in (\ref{eq:bgnnewb}) and 
$\chi = \varkappa_{\mathcal{S}}$ in (\ref{eq:bgnnewa}), we obtain
for the formulation $(\BGNmc)$, on recalling (\ref{eq:dEdt}), that
\begin{equation} \label{eq:bgnnewstab}
-\frac1{2\,\pi}\,
\ddt\, E(\vec x(t)) = \int_I \vec x \,.\,\vec\ek_1\,
|\varkappa_{\mathcal{S}}|^2\,|\vec x_\rho|\drho
= \int_I \vec x \,.\,\vec\ek_1\,
(\vec x_t\,.\,\vec\nu)^2\,|\vec x_\rho|\drho \geq 0\,.
\end{equation} 
Similarly, choosing $\vec\eta = \vec x_t \in\Vpartial$ in (\ref{eq:Dziuknewb})
and $\vec\chi = \vec\varkappa_{\mathcal{S}}$ in (\ref{eq:Dziuknewa}), 
we obtain for the formulation $(\GDmc)$ that
\begin{equation} \label{eq:Dziuknewstab}
-\frac1{2\,\pi}\,
\ddt\, E(\vec x(t)) = \int_I \vec x \,.\,\vec\ek_1\,
|\vec\varkappa_{\mathcal{S}}|^2\,|\vec x_\rho|\drho
= \int_I \vec x \,.\,\vec\ek_1\,|\vec x_t|^2\,|\vec x_\rho|\drho
\geq 0\,.
\end{equation} 
We observe that (\ref{eq:bgnnewb}) and (\ref{eq:Dziuknewb}) weakly impose
(\ref{eq:mcbc1},b). But, in contrast to (\ref{eq:varkappaweak}) and 
(\ref{eq:Dziukb}), it is not obvious  that they also weakly impose
(\ref{eq:bc}), due to the presence of the degenerate weight 
$\vec x\,.\,\vec\ek_1$. However, we show in the Appendix~\ref{sec:A1}
that in fact they also weakly impose (\ref{eq:bc}).

We also note that the formulation $(\BGNmc)$ is loosely related to 
$(\BGNmckappa)$, in the sense that the tangential component of $\vec x_t$ 
is not prescribed. But in contrast to $(\BGNmckappa)$, discretizations of
$(\BGNmc)$ cannot be shown to have an equidistribution property.
In a similar way, the formulation $(\GDmc)$ is loosely related to 
$(\GDmckappa)$, in the sense that the velocity $\vec x_t$ 
is purely in the normal direction, recall (\ref{eq:veckappaS}) and
(\ref{eq:varkappa}).
Finally, we observe that the variable $\varkappa_{\mathcal{S}}$ can be
eliminated from $(\BGNmc)$, by choosing $\chi = \vec\nu\,.\,\vec\eta$ 
in (\ref{eq:bgnnewa}) for $\vec\eta \in \Vpartial$, and then combining
(\ref{eq:bgnnewa}) and (\ref{eq:bgnnewb}). Similarly, 
$\vec\varkappa_{\mathcal{S}}$ can be eliminated from $(\GDmc)$
by choosing $\vec\chi = \vec\eta$ in (\ref{eq:Dziuknewa}) for 
$\vec\eta \in \Vpartial$, and then combining
(\ref{eq:Dziuknewa}) and (\ref{eq:Dziuknewb}).
We remark that the formulation $(\BGNmc)$, with the 
variable $\varkappa_{\mathcal{S}}$, as well as the formulation 
$(\BGNmckappa)$, are useful with a view towards introducing
numerical approximations of higher order flows, such as surface diffusion,
see \cite{axisd}.

\subsection{Nonlinear mean curvature flow}
It is a simple matter to extend the formulations $(\BGNmckappa)$ and $(\BGNmc)$
to the nonlinear flow (\ref{eq:nlmcf}). In principle this can also be achieved
for $(\GDmckappa)$ and $(\GDmc)$, but as the mean curvature needs to be
recovered from the mean curvature vector, the resulting formulations are less
natural. Hence we concentrate on $(\BGNmckappa)$ and $(\BGNmc)$.
For the former, replacing the right hand side in (\ref{eq:xtweak}) 
with $\int_I f(\varkappa - \frac{\vec\nu\,.\,\vec\ek_1}{\vec x\,.\,\vec\ek_1}
)\, \chi\,|\vec x_\rho| \drho$ yields a weak formulation 
for (\ref{eq:nlmcf}), which we call $(\BGNmckappa^f)$. 
Similarly, replacing $\varkappa_{\mathcal{S}}$ with
$f(\varkappa_{\mathcal{S}})$ in (\ref{eq:bgnnewa}) generalizes 
$(\BGNmc)$ to $(\BGNmc^f)$ for (\ref{eq:nlmcf}). 

Similarly to (\ref{eq:bgnnewstab}),
and using the same choices of $\vec\eta$ and $\chi$, 
it can be shown that solutions to $(\BGNmc^f)$ satisfy
\begin{equation} \label{eq:nlbgnnewstab}
-\frac1{2\,\pi}\,
\ddt\, E(\vec x(t)) = \int_I \vec x \,.\,\vec\ek_1\,
f(\varkappa_{\mathcal{S}})\,\varkappa_{\mathcal{S}}\,|\vec x_\rho|\drho
\,,
\end{equation} 
which yields stability if $f$ is monotonically increasing with $f(0)=0$.

Finally, we may also generalize these nonlinear formulations to the volume
preserving flow (\ref{eq:nlmcf2}). 

$(\BGNmckappa^{f,V})$:
Let $\vec x(0) \in \Vpartialzero$. For $t \in (0,T]$
find $\vec x(t) \in [H^1(I)]^2$, with
$\vec x_t(t) \in \Vpartial$, and $\varkappa(t)\in L^2(I)$ 
such that
\begin{subequations}
\begin{align}
& \int_I \vec x_t\,.\,\vec\nu\,\chi\,|\vec x_\rho|\drho
= \int_I f\left(\varkappa - \frac{\vec\nu\,.\,\vec\ek_1}{\vec x\,.\,\vec\ek_1}
\right) \chi\,|\vec x_\rho| \drho 
\nonumber \\ & \hspace{3.5cm}
- \frac{\int_I \vec x\,.\,\vec\ek_1\,
f(\varkappa - \frac{\vec\nu\,.\,\vec\ek_1}{\vec x\,.\,\vec\ek_1})\,
|\vec x_\rho| \drho}
{\int_I \vec x\,.\,\vec\ek_1\,|\vec x_\rho| \drho}\,
\int_I \chi\,|\vec x_\rho| \drho 
\quad \forall\ \chi \in L^2(I)\,,
\label{eq:nlmcfVa} \\
& \int_I \varkappa\,\vec\nu\,.\,\vec\eta\, |\vec x_\rho| \drho
+ \int_I (\vec x_\rho\,.\,\vec\eta_\rho)\,|\vec x_\rho|^{-1} \drho = 
- \sum_{i=1}^2
\sum_{p \in \partial_i I} \sliprho^{(p)}\,\vec\eta(p)\,.\,\vec\ek_{3-i}
\quad \forall\ \vec\eta \in \Vpartial\,.
\label{eq:nlmcfVb}
\end{align}
\end{subequations}
Choosing $\chi = 2\,\pi\,\vec x\,.\,\vec\ek_1$ in (\ref{eq:nlmcfVa}) yields,
on recalling (\ref{eq:dVdt}), that 
\begin{equation} \label{eq:dVdt0}
\pm \ddt\,\mathcal{L}^3(\Omega(t)) = 
\int_{\mathcal{S}(t)} \mathcal{V}_{\mathcal{S}} \dH{2}
= 2\,\pi\,\int_I (\vec x\,.\,\vec\ek_1)
\,\vec x_t\,.\,\vec\nu\,|\vec x_\rho|\drho = 0\,,
\end{equation}
where $\mathcal{S}(t) = \partial\Omega(t)$, 
and where the sign in (\ref{eq:dVdt0}) depends on whether 
$\vec\normal_{\mathcal{S}}$ is the outer or inner normal
to $\Omega(t)$ on $\mathcal{S}(t)$, recall (\ref{eq:nuS}) and (\ref{eq:calVS}). 

$(\BGNmc^{f,V})$:
Let $\vec x(0) \in \Vpartialzero$. For $t \in (0,T]$
find $\vec x(t) \in [H^1(I)]^2$, with $\vec x_t(t) \in \Vpartial$, and 
$\varkappa_{\mathcal{S}}(t) \in \Wpartialzero$ such that
\begin{subequations}
\begin{align}
& \int_I (\vec x \,.\,\vec\ek_1)\,\vec x_t\,.\,\vec\nu\,
\chi\,|\vec x_\rho|\drho
= \int_I \vec x \,.\,\vec\ek_1\,f(\varkappa_{\mathcal{S}})\,\chi\,
|\vec x_\rho|\drho
\nonumber \\ & \hspace{3cm}
- \frac{\int_I \vec x\,.\,\vec\ek_1\,f(\varkappa_{\mathcal{S}})\,
|\vec x_\rho| \drho}
{\int_I \vec x\,.\,\vec\ek_1\,|\vec x_\rho| \drho}\,
\int_I \vec x \,.\,\vec\ek_1\,\chi\,|\vec x_\rho| \drho 
 \qquad \forall\ \chi \in \Wpartialzero\,,
\label{eq:nlmcfVnewa} \\
& \int_I \vec x \,.\,\vec\ek_1\,\varkappa_{\mathcal{S}}\,\vec\nu\,.\,
\vec\eta\,|\vec x_\rho|\drho
+ \int_I \left[\vec\eta \,.\,\vec\ek_1
+ \vec x\,.\,\vec\ek_1 \,\frac{\vec x_\rho\,.\,\vec\eta_\rho}{|\vec x_\rho|^2}
\right] |\vec x_\rho| \drho 
\nonumber \\ & \hspace{4cm}
= - \sum_{i=1}^2 \sum_{p \in \partial_i I} \sliprho^{(p)}\,
(\vec x(p,t)\,.\,\vec\ek_1)\,\vec\eta(p)\,.\,\vec\ek_{3-i}
\qquad \forall\ \vec\eta \in \Vpartial\,.
\label{eq:nlmcfVnewb} 
\end{align}
\end{subequations}
Choosing $\chi = 2\,\pi$ in (\ref{eq:nlmcfVnewa}) yields (\ref{eq:dVdt0}), as
before.
Moreover, and similarly to (\ref{eq:nlbgnnewstab}), 
it can be shown for solutions of 
$(\BGNmc^{f,V})$ in the case (\ref{eq:fmcf}) that
\begin{equation} \label{eq:nlVbgnnewstab}
-\frac1{2\,\pi}\,
\ddt\, E(\vec x(t)) = \int_I \vec x \,.\,\vec\ek_1\,
|\varkappa_{\mathcal{S}}|^2\,|\vec x_\rho|\drho
- \left[\int_I \vec x\,.\,\vec\ek_1\,|\vec x_\rho| \drho\right]^{-1}
\left|\int_I \vec x\,.\,\vec\ek_1\,\varkappa_{\mathcal{S}}\,
|\vec x_\rho| \drho\right|^2 
\geq 0\,,
\end{equation} 
where we have noted the Cauchy--Schwarz inequality. It does not appear possible
to extend the stability result (\ref{eq:nlVbgnnewstab}) 
to the case of more general $f$.

\setcounter{equation}{0}
\section{Semidiscrete schemes} \label{sec:sd}

Let $[0,1]=\bigcup_{j=1}^J I_j$, $J\geq3$, be a
decomposition of $[0,1]$ into intervals given by the nodes $q_j$,
$I_j=[q_{j-1},q_j]$. 
For simplicity, and without loss of generality,
we assume that the subintervals form an equipartitioning of $[0,1]$,
i.e.\ that 
\begin{equation} \label{eq:Jequi}
q_j = j\,h\,,\quad \mbox{with}\quad h = J^{-1}\,,\qquad j=0,\ldots, J\,.
\end{equation}
Clearly, if $I=\RZ$ we identify $0=q_0 = q_J=1$.

The necessary finite element spaces are given by
$V^h  = \{\chi \in C(\overline I) : \chi\!\mid_{I_j} 
\mbox{ is linear}\ \forall\ j=1\to J\}$, 
$\Vh = [V^h]^2$, $\Vhpartialzero  = \Vh \cap \Vpartialzero$ and 
$\Vhpartial = \Vh \cap \Vpartial$.
Let $\{\chi_j\}_{j=j_0}^J$ denote the standard basis of $V^h$,
where $j_0 = 0$ if $I = (0,1)$ and $j_0 = 1$ if $I=\RZ$.
For later use, we let $\pi^h:C(\overline I)\to V^h$ 
be the standard interpolation operator at the nodes $\{q_j\}_{j=0}^J$.
Let $(\cdot,\cdot)$ denote the $L^2$--inner product on $I$, and 
define the mass lumped $L^2$--inner product $(f,g)^h$,
for two piecewise continuous functions, with possible jumps at the 
nodes $\{q_j\}_{j=1}^J$, via
\begin{equation}
( f, g )^h = \tfrac12\,h\,\sum_{j=1}^J 
\left[(f\,g)(q_j^-) + (f\,g)(q_{j-1}^+)\right],
\label{eq:ip0}
\end{equation}
where we define
$f(q_j^\pm)=\underset{\delta\searrow 0}{\lim}\ f(q_j\pm\delta)$.
The definition (\ref{eq:ip0}) naturally extends to vector valued functions.
It is easily shown that
\begin{equation} \label{eq:normequiv}
(\eta, \eta) \leq (\eta,\eta)^h \leq 3\,(\eta,\eta)\qquad
\forall\ \eta \in V^h\,.
\end{equation}

Let $(\vec X^h(t))_{t\in[0,T]}$, with $\vec X^h(t)\in \Vhpartialzero$,
be an approximation to $(\vec x(t))_{t\in[0,T]}$ and define
$\Gamma^h(t) = \vec X^h(t)(\overline I)$. Throughout this section we assume
that
\begin{equation} \label{eq:Xhpos}
\vec X^h(\rho,t) \,.\,\vec\ek_1 > 0 \quad 
\forall\ \rho \in \overline I\setminus \partial_0 I\,,
\qquad \forall\ t \in [0,T]\,.
\end{equation}
Assuming that $|\vec{X}^h_\rho| > 0$ almost everywhere on $I$,
and similarly to (\ref{eq:tau}), we set
\begin{equation*} 
\vec\tau^h = \vec X^h_s = \frac{\vec X^h_\rho}{|\vec X^h_\rho|} 
\qquad \mbox{and} \qquad \vec\nu^h = -(\vec\tau^h)^\perp\,.
\end{equation*}
We note that
\begin{equation} \label{eq:tauchieta}
(\vec\tau^h, (\vec\pi^h[\chi\,\vec\eta])_\rho) 
= (\vec\tau^h, (\chi\,\vec\eta)_\rho)
\qquad \forall\ \chi \in C(\overline I)\,,\ \vec\eta \in [C(\overline I)]^2\,.
\end{equation}
For later use, we let $\vec\omega^h \in \underline V^h$ be the mass-lumped 
$L^2$--projection of $\vec\nu^h$ onto $\underline V^h$, i.e.\
\begin{equation} \label{eq:omegah}
\left(\vec\omega^h, \vec\varphi \, |\vec X^h_\rho| \right)^h 
= \left( \vec\nu^h, \vec\varphi \, |\vec X^h_\rho| \right)
= \left( \vec\nu^h, \vec\varphi \, |\vec X^h_\rho| \right)^h
\qquad \forall\ \vec\varphi\in\underline V^h\,.
\end{equation}

Recall from (\ref{eq:A}) and (\ref{eq:E}) that
\begin{align} \label{eq:Eh}
E(\vec X^h(t)) & = 2\,\pi\left(\vec X^h(t)\,.\,\vec\ek_1 ,
|\vec X^h_\rho(t)|\right) \nonumber \\ & \quad
+ 2\,\pi \sum_{p\in \partial_1 I} 
\sliprho^{(p)}\,(\vec X^h(p,t)\,.\,\vec\ek_1)\,\vec X^h(p,t)\,.\,\vec\ek_2
+ \pi \sum_{p\in \partial_2 I} 
\sliprho^{(p)}\,(\vec X^h(p,t)\,.\,\vec\ek_1)^2\,.
\end{align}
We have, similarly to (\ref{eq:dEdt}), that
\begin{align}
\ddt\, E(\vec X^h(t)) & = 2\,\pi \left( \left[\vec X^h_t\,.\,\vec\ek_1
+ \vec X^h\,.\,\vec\ek_1 
\,\frac{(\vec X^h_t)_\rho\,.\,\vec X^h_\rho}
{|\vec X^h_\rho|^2} \right], |\vec X^h_\rho| \right)
\nonumber \\ & \qquad
+ 2\,\pi\,\sum_{p\in \partial_1 I} 
\sliprho^{(p)}\left[(\vec X^h_t(p,t)\,.\,\vec\ek_1)\,
\vec X^h(p,t)\,.\,\vec\ek_2 + 
(\vec X^h(p,t)\,.\,\vec\ek_1)\,
\vec X^h_t(p,t)\,.\,\vec\ek_2 \right]
\nonumber \\ & \qquad
+ 2\,\pi\,\sum_{p\in \partial_2 I} 
\sliprho^{(p)}\,(\vec X^h(p,t)\,.\,\vec\ek_1)\,
\vec X^h_t(p,t)\,.\,\vec\ek_1\,.
\label{eq:dEhdt}
\end{align}

\subsection{Mean curvature flow}

In view of the degeneracy on the right hand side of (\ref{eq:xt}), and on
recalling (\ref{eq:bclimit}) and (\ref{eq:omegah}), we introduce,
given a $\kappa^h(t) \in V^h$, the function 
$\doctorkappa^h(\kappa^h(t),t) \in V^h$ such that
\begin{equation} \label{eq:calKh}
[\doctorkappa^h (\kappa^h(t),t)](q_j) = \begin{cases}
\dfrac{\vec\omega^h(q_j,t)\,.\,\vec\ek_1}{\vec X^h(q_j,t)\,.\,\vec\ek_1}
& q_j \in \overline I \setminus \partial_0 I\,, \\
- \kappa^h(q_j,t) & q_j \in \partial_0 I\,.
\end{cases}
\end{equation}

Our semidiscrete finite element approximation of 
$(\BGNmckappa)$, (\ref{eq:xtweak},b), is given as follows.

$(\BGNmckappa_h)^h$:
Let $\vec X^h(0) \in \Vhpartialzero$. For $t \in (0,T]$
find $\vec X^h(t) \in \Vh$, with $\vec X^h_t(t) \in \Vhpartial$, and
$\kappa^h(t) \in V^h$ such that
\begin{subequations}
\begin{align}
\left(\vec X^h_t, \chi\,\vec\nu^h\,|\vec X^h_\rho|\right)^h
= \left(\kappa^h - \doctorkappa^h (\kappa^h),
\chi\,|\vec X^h_\rho| \right)^h 
\qquad \forall\ \chi \in V^h\,, \label{eq:sda}\\
\left(\kappa^h\,\vec\nu^h, \vec\eta\,|\vec X^h_\rho|\right)^h
+ \left(\vec X^h_\rho, \vec\eta_\rho\,|\vec X^h_\rho|^{-1}\right) 
= - \sum_{i=1}^2
\sum_{p \in \partial_i I} \sliprho^{(p)}\,\vec\eta(p)\,.\,\vec\ek_{3-i}
\qquad \forall\ \vec\eta \in \Vhpartial\,.
\label{eq:sdb}
\end{align}
\end{subequations}

\begin{rem} \label{rem:equid}
Let $\vec{h}_j(t) = \vec{X}^h(q_j,t) - \vec{X}^h(q_{j-1},t)$ 
for $j = 1,\ldots, J$, and set $\vec h_0 = \vec h_J$ if 
$\partial I = \emptyset$. Then, if 
$(\vec X^h(t), \kappa^h(t)) \in \Vh \times V^h$ satisfies
{\rm (\ref{eq:sdb})}, it holds that
\begin{equation}
|\vec{h}_j(t)| = |\vec{h}_{j - 1}(t)| \quad \mbox{if} \quad 
\vec{h}_j(t) \nparallel \vec{h}_{j - 1}(t) \quad 
\begin{cases} 
j = 1 ,\ldots, J & \partial I = \emptyset\,, \\
j = 2 ,\ldots, J & \partial I \not= \emptyset\,.
\end{cases}
\label{eq:equid}
\end{equation}
The equidistribution property {\rm (\ref{eq:equid})} can be shown by choosing
$\vec\eta = \chi_{j-1}\,[\vec\omega^h(q_{j-1},t)]^\perp \in \Vhpartial$ 
in {\rm (\ref{eq:sdb})}, recall {\rm (\ref{eq:omegah})}. 
See also \cite[Remark~2.4]{triplej} and \cite[Remark~2.5]{triplejMC}
for more details.

We also remark that it follows from {\rm (\ref{eq:omegah})} that
\begin{equation} \label{eq:omeganorm}
|\vec\omega^h(q_{j-1},t)| < 1 \quad \mbox{if} \quad 
\vec{h}_j(t) \nparallel \vec{h}_{j - 1}(t) \quad 
\begin{cases} 
j = 1 ,\ldots, J & \partial I = \emptyset\,, \\
j = 2 ,\ldots, J & \partial I \not= \emptyset\,.
\end{cases}
\end{equation}
\end{rem}

We note that mass lumping in (\ref{eq:sdb}) is crucial for the proof of the
equidistribution property (\ref{eq:equid}). Hence we only
consider the variant $(\BGNmckappa_h)^h$ with mass lumping.
Of course, in the case $\partial_0 I= \emptyset$, an alternative scheme to 
(\ref{eq:sda},b) is
\begin{equation} \label{eq:sd2a}
\left(\vec X^h_t, \chi\,\vec\nu^h\,|\vec X^h_\rho|\right)^h
= \left(\kappa^h - \frac{\vec\nu^h\,.\,\vec\ek_1}{\vec X^h\,.\,\vec\ek_1},
\chi\,|\vec X^h_\rho| \right)^h 
\qquad \forall\ \chi \in V^h\,,
\end{equation}
together with (\ref{eq:sdb}). Note that if $\partial_0 I= \emptyset$ 
then (\ref{eq:sda}) 
collapses to (\ref{eq:sd2a}) with $\vec\nu^h$ replaced by $\vec\omega^h$.
Unfortunately, neither choice appears to lead to a stability proof.

In an attempt to prove stability, we choose 
$\vec\eta = \vec\pi^h[(\vec X^h\,.\,\vec\ek_1)\,\vec X^h_t]$
in (\ref{eq:sdb}). Then it follows from 
(\ref{eq:dEhdt}), $\vec X^h_t \in \Vhpartial$, 
(\ref{eq:tauchieta}) and (\ref{eq:omegah}) that
\begin{align}
\ddt\,E(\vec X^h(t)) & 
= 2\,\pi \left( \left[\vec X^h_t\,.\,\vec\ek_1
+ \vec X^h\,.\,\vec\ek_1\,\frac{(\vec X^h_t)_\rho\,.\,\vec X^h_\rho}
{|\vec X^h_\rho|^2} \right], |\vec X^h_\rho| \right)
\nonumber \\ & \qquad
+ 2\,\pi \sum_{i=1}^2 \sum_{p\in \partial_i I} 
\sliprho^{(p)}\,(\vec X^h(p,t)\,.\,\vec\ek_1)\,
\vec X^h_t(p,t)\,.\,\vec\ek_{3-i}\,.
\nonumber \\ & 
= 2\,\pi\left(\vec X^h_t,\left[\vec\ek_1
- (\vec\ek_1\,.\,\vec\tau^h)\,\vec\tau^h
\right] |\vec X^h_\rho|\right)
- 2\,\pi\left(\vec X^h\,.\,\vec\ek_1\,\kappa^h\,\vec\nu^h,
\vec X^h_t\, |\vec X^h_\rho|\right)^h
\nonumber \\ &
= 2\,\pi\left(\vec X^h_t\,.\,\vec\nu^h,\vec\ek_1\,.\,\vec\nu^h\,
|\vec X^h_\rho|\right)
- 2\,\pi\left(\vec X^h\,.\,\vec\ek_1\,\kappa^h,
\vec X^h_t\,.\,\vec\nu^h\, |\vec X^h_\rho|\right)^h
\nonumber \\ &
= 2\,\pi\left(\vec X^h_t\,.\,\vec\nu^h,\vec\ek_1\,.\,\vec\nu^h\,|\vec
X^h_\rho|\right)^h
- 2\,\pi\left(\vec X^h\,.\,\vec\ek_1\,\kappa^h,
\vec X^h_t\,.\,\vec\nu^h\, |\vec X^h_\rho|\right)^h
\nonumber \\ &
= - 2\,\pi\left(\vec X^h\,.\,\vec\ek_1\left[\kappa^h
- \frac{\vec\nu^h\,.\,\vec\ek_1}{\vec X^h\,.\,\vec\ek_1}\right], 
\vec X^h_t\,.\,\vec\nu^h\, |\vec X^h_\rho|\right)^h
\nonumber \\ &
= - 2\,\pi\left(\vec X^h\,.\,\vec\ek_1, \kappa^h\,
\vec X^h_t\,.\,\vec\omega^h\, |\vec X^h_\rho|\right)^h
+ 2\,\pi\left( \vec\nu^h\,.\,\vec\ek_1,
\vec X^h_t\,.\,\vec\nu^h\, |\vec X^h_\rho|\right)^h.
\label{eq:stabh}
\end{align}
Moreover, considering for simplicity the case $\partial_0 I= \emptyset$,
and choosing \linebreak
$\chi = -\pi^h[2\,\pi\,(\vec X^h\,.\,\vec\ek_1)\,(\vec
X^h_t\,.\,\vec\omega^h)]$ in (\ref{eq:sd2a}) 
yields, on noting (\ref{eq:omegah}), that
\begin{align} \label{eq:Xhnorm}
0&\geq
- 2\,\pi
\left(\vec X^h\,.\,\vec\ek_1,(\vec X^h_t\,.\,\vec\omega^h)^2\,
|\vec X^h_\rho| \right)^h 
=-2\,\pi\left(\vec X^h\,.\,\vec\ek_1\left[\kappa^h
- \frac{\vec\omega^h\,.\,\vec\ek_1}{\vec X^h\,.\,\vec\ek_1}\right], 
\vec X^h_t\,.\,\vec\omega^h\, |\vec X^h_\rho|\right)^h \nonumber \\ &
= -2\,\pi\left(\vec X^h\,.\,\vec\ek_1, \kappa^h\,
\vec X^h_t\,.\,\vec\omega^h\, |\vec X^h_\rho|\right)^h
+ 2\,\pi\left(\vec\omega^h\,.\,\vec\ek_1, 
\vec X^h_t\,.\,\vec\omega^h\, |\vec X^h_\rho|\right)^h \nonumber \\ &
= -2\,\pi\left(\vec X^h\,.\,\vec\ek_1, \kappa^h\,
\vec X^h_t\,.\,\vec\omega^h\, |\vec X^h_\rho|\right)^h
+ 2\,\pi\left(\vec\nu^h\,.\,\vec\ek_1, 
\vec X^h_t\,.\,\vec\omega^h\, |\vec X^h_\rho|\right)^h .
\end{align}
Unfortunately, the right hand sides in (\ref{eq:stabh}) and
(\ref{eq:Xhnorm}) are not equal, recall (\ref{eq:omegah}),
and so combining (\ref{eq:stabh}) and
(\ref{eq:Xhnorm}) does not yield a stability result. 
On the other hand, the function 
$(\vec X^h\,.\,\vec\ek_1)\,(\vec X^h_t\,.\,\vec\nu^h)$ is discontinuous,
and so
$\pi^h[(\vec X^h\,.\,\vec\ek_1)\,(\vec X^h_t\,.\,\vec\nu^h)]$ 
is not well-defined, and cannot be chosen as a test function in (\ref{eq:sda})
or (\ref{eq:sd2a}).

However, the fully discrete variant of $(\BGNmckappa_h)^h$, (\ref{eq:sda},b), 
performs very well in practice.

A semidiscrete approximation of $(\GDmckappa)$,
(\ref{eq:Dziuka},b), is given as follows. 

$(\GDmckappa_h)^h$:
Let $\vec X^h(0) \in \Vhpartialzero$. For $t \in (0,T]$
find $\vec X^h(t) \in \Vh$, with $\vec X^h_t(t) \in \Vhpartial$, and
$\vec\kappa^h(t) \in \Vh$, such that
\begin{subequations}
\begin{align}
\left(\vec X^h_t, \vec\chi\,|\vec X^h_\rho| \right)^h
= \left(\vec\kappa^h - \vec{\doctorkappa}^h(\vec\kappa^h),
\vec\chi\,|\vec X^h_\rho|\right)^h 
\qquad \forall\ \vec\chi \in \Vh\,, \label{eq:sddziuka}\\
\left(\vec\kappa^h, \vec\eta\,|\vec X^h_\rho|\right)^h
+ \left(\vec X^h_\rho, \vec\eta_\rho\,|\vec X^h_\rho|^{-1}\right)
= - \sum_{i=1}^2 
\sum_{p \in \partial_i I} \sliprho^{(p)}\,\vec\eta(p)\,.\,\vec\ek_{3-i}
\qquad \forall\ \vec\eta \in \Vhpartial\,,
\label{eq:sddziukb}
\end{align}
\end{subequations}
where $\vec{\doctorkappa}^{h}(\vec\kappa^h) \in \Vh$ 
is such that
\begin{equation} \label{eq:dziukcalKh}
[\vec{\doctorkappa}^{h}(\vec\kappa^h(t),t))](q_j) = \begin{cases}
\dfrac{\vec\omega^h(q_j,t)\,.\,\vec\ek_1}{\vec X^h(q_j,t)\,.\,\vec\ek_1}\,
\dfrac{\vec\omega^h(q_j,t)}{|\vec\omega^h(q_j,t)|^2} 
& q_j \in \overline I \setminus \partial_0 I\,, \\
- \vec\kappa^h(q_j,t) & q_j \in \partial_0 I\,.
\end{cases}
\end{equation}
The rescaling factor $|\vec\omega^h(q_j,t)|^2$ in {\rm (\ref{eq:dziukcalKh})}
normalizes the discrete vertex normals $\vec\omega^h(q_j,t)$, 
recall (\ref{eq:omeganorm}), which is the 
most natural approach. Similarly to $(\BGNmckappa_h)^h$, 
it does not appear possible to
prove a stability result for $(\GDmckappa_h)^h$.

However, it turns out that approximations of the formulations 
$(\BGNmc)$ and $(\GDmc)$ can be shown to be stable. In particular,
our semidiscrete approximations of 
$(\BGNmc)$, (\ref{eq:bgnnewa},b), and $(\GDmc)$, (\ref{eq:Dziuknewa},b), 
are given as follows, where we first define
\[
W^h = V^h\,,\quad
W^h_{\partial_0} = \{ \chi \in V^h : \chi(\rho) = 0
\quad \forall\ \rho \in \partial_0 I\}\,, \quad 
\underline W^h = \underline V^h\,,\quad
\underline W^h_{\partial_0} = [W^h_{\partial_0}]^2\,.
\]

$(\BGNmc_h)^{(h)}$:
Let $\vec X^h(0) \in \Vhpartialzero$. For $t \in (0,T]$
find $\vec X^h(t) \in \Vh$, with $\vec X^h_t(t) \in \Vhpartial$, and
$\kappa_{\mathcal{S}}^h(t) \in \Whpartialzero$ such that
\begin{subequations}
\begin{align}
& \left((\vec X^h\,.\,\vec\ek_1)\,
\vec X^h_t, \chi\,\vec\nu^h\,|\vec X^h_\rho|\right)^{(h)}
= \left(\vec X^h\,.\,\vec\ek_1\,\kappa_{\mathcal{S}}^h,
\chi\,|\vec X^h_\rho|\right)^{(h)} \qquad \forall\ \chi \in \Whpartialzero\,,
\label{eq:mcsdnewa} \\
&
\left(\vec X^h\,.\,\vec\ek_1\,\kappa_{\mathcal{S}}^h\,\vec\nu^h,
\vec\eta\,|\vec X^h_\rho|\right)^{(h)}
+ \left( \vec\eta \,.\,\vec\ek_1, |\vec X^h_\rho|\right)
+ \left( (\vec X^h\,.\,\vec\ek_1)\,
\vec X^h_\rho,\vec\eta_\rho\, |\vec X^h_\rho|^{-1} \right)
\nonumber \\ & \hspace{4cm}
= - \sum_{i=1}^2
\sum_{p \in \partial_i I} \sliprho^{(p)}\,
(\vec X^h(p,t)\,.\,\vec\ek_1)\,\vec\eta(p)\,.\,\vec\ek_{3-i}
\qquad \forall\ \vec\eta \in \Vhpartial\,.
\label{eq:mcsdnewb}
\end{align}
\end{subequations}
Here and throughout we use the notation $\cdot^{(h)}$ to denote an 
expression with or without the superscript $h$, and similarly
for the subscripts $\cdot_{(\partial_0)}$. I.e.\ the scheme
$(\BGNmc_h)^h$ employs mass lumping, recall (\ref{eq:ip0}), and
seeks $\kappa_S(t) \in W^h_{\partial_0}$, while 
the scheme $(\BGNmc_h)$ employs true integration throughout
and seeks $\kappa_S(t) \in W^h = V^h$.

$(\GDmc_h)^{(h)}$:
Let $\vec X^h(0) \in \Vhpartialzero$. For $t \in (0,T]$
find $\vec X^h(t) \in \Vh$, with $\vec X^h_t(t) \in \Vhpartial$, and
$\vec\kappa_{\mathcal{S}}^h(t) \in \vecWhpartialzero$ such that
\begin{subequations}
\begin{align}
& \left((\vec X^h\,.\,\vec\ek_1)\,
\vec X^h_t,\vec\chi\,|\vec X^h_\rho|\right)^{(h)}
= \left((\vec X^h\,.\,\vec\ek_1)\,\vec\kappa_{\mathcal{S}}^h,
\vec\chi\,|\vec X^h_\rho|\right)^{(h)} \qquad \forall\ \vec\chi \in 
\vecWhpartialzero\,,
\label{eq:dziuksdnewa} \\
&
\left((\vec X^h\,.\,\vec\ek_1)\,\vec\kappa_{\mathcal{S}}^h,
\vec\eta\,|\vec X^h_\rho|\right)^{(h)}
+ \left( \vec\eta \,.\,\vec\ek_1, |\vec X^h_\rho|\right)
+ \left( (\vec X^h\,.\,\vec\ek_1)\,
\vec X^h_\rho,\vec\eta_\rho\, |\vec X^h_\rho|^{-1} \right) 
\nonumber \\ & \hspace{4cm}
= - \sum_{i=1}^2 \sum_{p \in \partial_i I} \sliprho^{(p)}\,
(\vec X^h(p,t)\,.\,\vec\ek_1)\,\vec\eta(p)\,.\,\vec\ek_{3-i}
\quad \forall\ \vec\eta \in \Vhpartial\,.
\label{eq:dziuksdnewb}
\end{align}
\end{subequations}
We observe that $(\BGNmc_h)^{h}$ and 
$(\GDmc_h)^{h}$ do not
depend on the values of $\kappa_{\mathcal{S}}^h$ and 
$\vec\kappa_{\mathcal{S}}^h$,
respectively, on $\partial_0 I$. Hence we fix these values to be zero by 
requiring that $\kappa_{\mathcal{S}}^h \in W^h_{\partial_0}$ and
$\vec\kappa_{\mathcal{S}}^h \in \underline W^h_{\partial_0}$,
and by using a reduced set of test functions in (\ref{eq:mcsdnewa}) and
(\ref{eq:dziuksdnewa}). As a consequence, it seems at first that 
$\vec X^h_t$ is not defined on $\partial_0 I$. However, $\vec X^h$ on 
$\partial_0 I$ is determined through (\ref{eq:mcsdnewb}) and
(\ref{eq:dziuksdnewb}), respectively.

We have on choosing $\chi = \kappa_{\mathcal{S}}^h$ in (\ref{eq:mcsdnewa}),
$\vec\chi = \vec\kappa_{\mathcal{S}}^h$ in (\ref{eq:dziuksdnewa}) and
$\vec\eta = \vec X^h_t$ in (\ref{eq:mcsdnewb}) 
and (\ref{eq:dziuksdnewb}), on recalling (\ref{eq:dEhdt}), that
\begin{equation} \label{eq:dEhepsdt}
-\frac1{2\,\pi}\,
\ddt\, E(\vec X^h(t)) = 
\begin{cases}
\left(\vec X^h\,.\,\vec\ek_1\,
|\kappa_{\mathcal{S}}^h|^2, |\vec X^h_\rho|\right)^{(h)},\\
\left(\vec X^h\,.\,\vec\ek_1\,
|\vec\kappa_{\mathcal{S}}^h|^2, |\vec X^h_\rho|\right)^{(h)} ,\\
\end{cases}
\end{equation}
respectively. This shows that both methods are stable, where we recall
(\ref{eq:Xhpos}). 
Similarly to (\ref{eq:Dziuknewstab}) and (\ref{eq:bgnnewstab}), we observe that
(\ref{eq:dEhepsdt}) implies for $(\GDmc_h)^{(h)}$ and $(\BGNmc_h)^h$ that
\[
-\frac1{2\,\pi}\,
\ddt\, E(\vec X^h(t)) = 
\begin{cases}
\left(\vec X^h\,.\,\vec\ek_1\,|\vec X^h_t|^2, |\vec X^h_\rho|\right)^{(h)},\\
\left(\vec X^h\,.\,\vec\ek_1\,
(\vec X^h_t\,.\,\vec\omega^h)^2, |\vec X^h_\rho|\right)^h,
\end{cases}
\]
respectively, where we have recalled (\ref{eq:omegah}).
This shows that they can be interpreted as
natural $L^2$--gradient flows of (\ref{eq:Eh}).

We observe that it is possible to eliminate $\vec\kappa^h_{\mathcal{S}}$ from
the schemes $(\GDmc_h)^{(h)}$, which yields (\ref{eq:dziuksdnewb}) with
$\vec\kappa^h_{\mathcal{S}}$ replaced by $\vec X^h_t$. Similarly, 
$\kappa^h_{\mathcal{S}}$ can be removed from the scheme 
$(\BGNmc_h)^{h}$ to yield (\ref{eq:mcsdnewb}) with 
$\kappa^h_{\mathcal{S}}\,\vec\nu^h$ replaced by
$(\vec X^h_t\,.\,\vec\omega^h)\,\vec\omega^h$, on recalling
(\ref{eq:omegah}). For the scheme
$(\BGNmc_h)$ this elimination procedure is not possible.

For the reader's convenience, Table~\ref{tab:schemes} summarises the
main properties of all the schemes introduced in this section.
\begin{table}
\center
\begin{tabular}{l|c|c|c}
scheme & stability proof & implicit tangential motion & equidistribution \\ \hline
$(\BGNmckappa_h)^h$ & no & yes & yes \\
$(\GDmckappa_h)^{h}$ & no & no & no \\
$(\BGNmc_h)^{(h)}$ & yes & yes & no \\
$(\GDmc_h)^{(h)}$ & yes & no & no \\
\end{tabular}
\caption{Properties of the different semidiscrete schemes for mean curvature
flow.}
\label{tab:schemes}
\end{table}%

\subsection{Nonlinear mean curvature flow}
Replacing $\kappa^h - \doctorkappa^h(\kappa^h)$ with
$f(\kappa^h - \doctorkappa^h(\kappa^h))$ in (\ref{eq:sda}) yields the scheme
$(\BGNmckappa_h^f)^h$. 
Similarly, the scheme $(\BGNmckappa_h^{f,V})^h$ is given by
(\ref{eq:sda},b) with the right hand side in (\ref{eq:sda}) 
replaced by
\begin{equation} \label{eq:nlVsda}
\left( f(\kappa^h - \doctorkappa^h(\kappa^h)),
\chi\,|\vec X^h_\rho|\right)^h 
- \frac{\left(\vec X^h\,.\,\vec\ek_1, f(\kappa^h - \doctorkappa^h(\kappa^h))\,
|\vec X^h_\rho|\right)^{h}}
{\left(\vec X^h\,.\,\vec\ek_1, |\vec X^h_\rho|\right)}
\left(\chi,|\vec X^h_\rho|\right)^{h} .
\end{equation}
These two schemes inherit the equidistribution property, 
recall (\ref{eq:equid}).
Replacing $\kappa^h_{\mathcal{S}}$ with
$\pi^h[f(\kappa^h_{\mathcal{S}})]$ in (\ref{eq:mcsdnewa}) yields the schemes
$(\BGNmc_h^f)^{(h)}$ and similarly we can define 
$(\BGNmc_h^{f,V})^{(h)}$ by replacing the right hand side in 
(\ref{eq:mcsdnewa}) by
\begin{equation} \label{eq:nlVmcsdnewa}
\left( \vec X^h\,.\,\vec\ek_1\,\pi^h[f(\kappa^h_{\mathcal{S}})],
\chi\,|\vec X^h_\rho|\right)^{(h)} 
- \frac{\left(\vec X^h\,.\,\vec\ek_1, \pi^h[f(\kappa^h_{\mathcal{S}})]\,
|\vec X^h_\rho|\right)^{(h)}}
{\left(\vec X^h\,.\,\vec\ek_1, |\vec X^h_\rho|\right)}
\left(\vec X^h\,.\,\vec\ek_1,\chi\,|\vec X^h_\rho|\right)^{(h)} .
\end{equation}
Similarly to (\ref{eq:dEhepsdt}),
and using the same choices of $\vec\eta$ and $\chi$, 
it can be shown that solutions to 
the scheme $(\BGNmc_h^f)^{(h)}$ satisfy
$-\frac1{2\,\pi}\,\ddt\, E(\vec X^h(t)) = 
\left((\vec X^h\,.\,\vec\ek_1)\,f(\kappa_{\mathcal{S}}^h),
\kappa_{\mathcal{S}}^h\, |\vec X^h_\rho|\right)^{(h)}$,
which yields a stability bound for $(\BGNmc_h^f)^{h}$
if $f$ is monotonically increasing with $f(0) = 0$. Of course,
(\ref{eq:nlVmcsdnewa}) is a discrete analogue of (\ref{eq:nlbgnnewstab}). 
Moreover, solutions to $(\BGNmc_h^{f,V})^{(h)}$, in the case (\ref{eq:fmcf}),
satisfy
\begin{align*} 
& -\frac1{2\,\pi}\,\ddt\, E(\vec X^h(t)) 
\nonumber \\ & \qquad
= \left(\vec X^h \,.\,\vec\ek_1\,
|\kappa_{\mathcal{S}}^h|^2,|\vec X^h_\rho| \right)^{(h)}
- \left[ \left( \vec X^h\,.\,\vec\ek_1,|\vec X^h_\rho| 
\right)\right]^{-1}
\left|\left( \vec X^h\,.\,\vec\ek_1,\kappa_{\mathcal{S}}^h\,
|\vec X^h_\rho| \right)^{(h)}\right|^2 \geq 0\,,
\end{align*} 
similarly to (\ref{eq:nlVbgnnewstab}), where here we have also used a
Cauchy--Schwarz inequality for the mass lumped inner product (\ref{eq:ip0}). 
Finally, solutions to the scheme $(\BGNmc_h^{f,V})$ conserve the 
volume of the domain $\Omega^h(t) \subset \bR^3$ that is enclosed by 
the three-dimensional axisymmetric surface 
$\mathcal{S}^h(t)$ that is generated by the curve $\Gamma^h(t)$.
To see this, choose $\chi = 2\,\pi$ in  (\ref{eq:mcsdnewa}),
with the modified right hand side (\ref{eq:nlVmcsdnewa}), to obtain 
\begin{equation} \label{eq:constchi}
0 = 2\,\pi \left(\vec X^h\,.\,\vec\ek_1,
\vec X^h_t\,.\, \vec\nu^h\,|\vec X^h_\rho|\right)
= \int_{\mathcal{S}^h(t)} \mathcal{V}^h_{\mathcal{S}^h} \dH{2}
= \ddt\,\mathcal{L}^3(\Omega^h(t))\,,
\end{equation}
recall (\ref{eq:dVdt}). 
Here $\mathcal{V}^h_{\mathcal{S}^h}(t)$ denotes the normal velocity of
$\mathcal{S}^h(t)$ in the direction of $\vec\nu^h_{\mathcal{S}^h}(t)$,
the outer normal to $\Omega^h(t)$ on $\mathcal{S}^h(t)$, 
where $\vec\nu^h_{\mathcal{S}^h}(t)$ is induced by $\vec\nu^h$ through
a discrete analogue of (\ref{eq:nuS}). Using the same testing procedure for the
scheme $(\BGNmc_h^{f,V})^h$ yields that 
\begin{equation} \label{eq:constchih}
0 = 2\,\pi \left(\vec X^h\,.\,\vec\ek_1,
\vec X^h_t\,.\, \vec\nu^h\,|\vec X^h_\rho|\right)^h\,,
\end{equation}
and so the enclosed volume is only approximately preserved, compare with
(\ref{eq:constchi}). 
Finally, 
choosing $\chi = \vec X^h\,.\,\vec\ek_1$ in $(\BGNmckappa_h^{f,V})^h$,
recall (\ref{eq:nlVsda}), also
yields (\ref{eq:constchih}), and so an approximate volume preservation
property. 

\setcounter{equation}{0}
\section{Fully discrete schemes} \label{sec:fd}

Let $0= t_0 < t_1 < \ldots < t_{M-1} < t_M = T$ be a
partitioning of $[0,T]$ into possibly variable time steps 
$\ttau_m = t_{m+1} - t_{m}$, $m=0\to M-1$. 
We set $\ttau = \max_{m=0\to M-1}\ttau_m$.
For a given $\vec{X}^m\in \Vhpartialzero$,
assuming that $|\vec{X}^m_\rho| > 0$ almost everywhere on $I$,
we set $\vec\nu^m = - \frac{[\vec X^m_\rho]^\perp}{|\vec X^m_\rho|}$.
Let $\vec\omega^m \in \Vh$ be the natural fully discrete analogue of
$\vec\omega^h \in \Vh$, recall (\ref{eq:omegah}), i.e.\
\begin{equation} \label{eq:omegam}
\left(\vec\omega^m, \vec\varphi \, |\vec X^m_\rho| \right)^h 
= \left( \vec\nu^m, \vec\varphi \, |\vec X^m_\rho| \right)
= \left( \vec\nu^m, \vec\varphi \, |\vec X^m_\rho| \right)^h
\qquad \forall\ \vec\varphi\in\underline V^h\,.
\end{equation}

\subsection{Mean curvature flow}

Similarly to (\ref{eq:calKh}), and given a $\kappa^{m+1} \in V^h$,
we introduce $\doctorkappa^{m}(\kappa^{m+1}) \in V^h$ such that
\begin{equation} \label{eq:calKm}
[\doctorkappa^{m}(\kappa^{m+1})](q_j) = \begin{cases}
\dfrac{\vec\omega^m(q_j)\,.\,\vec\ek_1}{\vec X^m(q_j)\,.\,\vec\ek_1}
& q_j \in \overline I \setminus \partial_0 I\,, \\
- \kappa^{m+1}(q_j) & q_j \in \partial_0 I\,.
\end{cases}
\end{equation}

Then our fully discrete analogue of $(\BGNmckappa_h)^h$,
(\ref{eq:sda},b), is given as follows.

$(\BGNmckappa_m)^h$:
Let $\vec X^0 \in \Vhpartialzero$. For $m=0,\ldots,M-1$, 
find $(\delta\vec X^{m+1}, \kappa^{m+1}) \in \Vhpartial \times V^h$,
where $\vec X^{m+1} = \vec X^m + \delta \vec X^{m+1}$, 
such that
\begin{subequations}
\begin{align}
& 
\left(\frac{\vec X^{m+1} - \vec X^m}{\ttau_m}, \chi\,\vec\nu^m\,|\vec
X^m_\rho|\right)^h
= \left(\kappa^{m+1} - \doctorkappa^{m}(\kappa^{m+1}),
\chi\,|\vec X^m_\rho|\right)^h 
\quad \forall\ \chi \in V^h\,, \label{eq:fda}\\
& \left(\kappa^{m+1}\,\vec\nu^m, \vec\eta\,|\vec X^m_\rho|\right)^h
+ \left(\vec X^{m+1}_\rho, \vec\eta_\rho\,|\vec X^m_\rho|^{-1}\right) 
= - \sum_{i=1}^2 
\sum_{p \in \partial_i I} \sliprho^{(p)}\,\vec\eta(p)\,.\,\vec\ek_{3-i}
\quad \forall\ \vec\eta \in \Vhpartial\,.
\label{eq:fdb}
\end{align}
\end{subequations}

We make the following mild assumptions.
\begin{tabbing}
$(\mathfrak A)$ \quad \= Let
$|\vec{X}^m_\rho| > 0$ for almost all $\rho\in I$, and let
$\vec{X}^m \,.\,\vec\ek_1 > 0$ for all $\rho\in \overline I \setminus
\partial_0 I$.\\
$(\mathfrak B)^h$\quad \>
Let $\mathcal Z^{h} = 
\left\{ \left( \vec\nu^m , \chi |\vec X^m_\rho| \right)^{h} 
: \chi \in V^h \right \} \subset \bR^2$ and assume that
$\dim \spa \mathcal Z^{h} = 2$.
\end{tabbing}
Note that the assumption $(\mathfrak B)^h$, on recalling
(\ref{eq:omegah}), is equivalent to assuming that \linebreak
$\dim \spa\{\vec{\omega}^m(q_j)\}_{j=0}^{J}= 2$.

\begin{lem} \label{lem:ex}
Let $\vec X^m \in \Vhpartialzero$ satisfy the assumptions $(\mathfrak A)$
and $(\mathfrak B)^h$.
Then there exists a unique solution 
$(\delta\vec X^{m+1}, \kappa^{m+1}) \in \Vhpartial \times V^h$ to 
$(\BGNmckappa_m)^h$.
\end{lem}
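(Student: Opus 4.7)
\proof
The system $(\BGNmckappa_m)^h$ is a square linear system for $(\delta \vec X^{m+1},\kappa^{m+1}) \in \Vhpartial \times V^h$, so it suffices to prove uniqueness. The plan is to take the homogeneous system (zero right hand sides) with unknowns $(\delta\vec X, \kappa)$, test appropriately to exploit the natural symmetry between (\ref{eq:fda}) and (\ref{eq:fdb}), and combine this with assumption $(\mathfrak B)^h$.

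The key observation about $\doctorkappa^m$ from (\ref{eq:calKm}) is that its nodal values at $q_j \in \overline I \setminus \partial_0 I$ depend only on $\vec X^m$ (not on $\kappa^{m+1}$), while its nodal values at $q_j \in \partial_0 I$ are $-\kappa^{m+1}(q_j)$. In the homogeneous system, the data-dependent part is dropped, so $(\kappa - \doctorkappa^m(\kappa))(q_j)$ equals $\kappa(q_j)$ at interior nodes and $2\,\kappa(q_j)$ at nodes in $\partial_0 I$. Testing the homogeneous (\ref{eq:fda}) with $\chi = \kappa$ and the homogeneous (\ref{eq:fdb}) with $\vec\eta = \delta\vec X \in \Vhpartial$, and then comparing the two identities (the term $(\kappa\,\vec\nu^m,\delta\vec X\,|\vec X^m_\rho|)^h$ cancels), I expect to obtain an identity of the form
\begin{equation*}
\ttau_m \left(\kappa\,(\kappa - \doctorkappa^m(\kappa)),|\vec X^m_\rho|\right)^h + \left((\delta\vec X)_\rho,(\delta\vec X)_\rho\,|\vec X^m_\rho|^{-1}\right) = 0\,.
\end{equation*}
Both terms are nonnegative by the above observation and by assumption $(\mathfrak A)$, which ensures the weights are strictly positive. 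Hence each term vanishes, which forces $\kappa \equiv 0$ in $V^h$ and $(\delta\vec X)_\rho \equiv 0$ on $I$, so that $\delta\vec X = \vec a$ is a constant vector on $\overline I$.

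It remains to show $\vec a = \vec 0$. Returning to the homogeneous (\ref{eq:fda}) with $\kappa = 0$, I have
\begin{equation*}
\left(\vec a, \chi\,\vec\nu^m\,|\vec X^m_\rho|\right)^h = 0 \qquad \forall\ \chi \in V^h\,,
\end{equation*}
which, by (\ref{eq:omegam}), is equivalent to $\vec a \cdot (\vec\omega^m,\chi\,|\vec X^m_\rho|)^h = 0$ for every $\chi \in V^h$. Running $\chi$ through the nodal basis $\{\chi_j\}$ shows that $\vec a$ is orthogonal to every element of the set $\mathcal Z^h$ from $(\mathfrak B)^h$, and since $\dim \spa \mathcal Z^h = 2$, it follows that $\vec a = \vec 0$. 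Therefore $\delta\vec X = \vec 0$, which completes the uniqueness proof and hence also yields existence.

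The main technical point is the bookkeeping of $\doctorkappa^m$: one must recognise that only its nodal values on $\partial_0 I$ contribute a (still sign-definite) linear dependence on $\kappa^{m+1}$, while the remaining nodal values give only a contribution to the right hand side that drops out of the homogeneous argument. The use of $(\mathfrak B)^h$ in the last step is essential, since the argument so far leaves the constant $\vec a$ undetermined when $\partial_D I$ is empty and the other boundary conditions only pin down one component of $\vec a$.
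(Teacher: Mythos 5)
Your proof is correct and follows essentially the same route as the paper: the homogeneous system is tested with $\chi=\kappa$ and $\vec\eta=\delta\vec X$, the bookkeeping of $\doctorkappa^m$ produces exactly the sign-definite weight (equal to $1$ at interior nodes and $2$ on $\partial_0 I$, the paper's $\lambda$), and the constant $\delta\vec X$ is killed via $(\mathfrak B)^h$. No gaps.
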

\begin{proof}
We note that since $\vec X^m \in \Vhpartialzero$ satisfies the assumption 
$(\mathfrak A)$, the right hand side of (\ref{eq:fda}) is well-defined.
As (\ref{eq:fda},b) is linear, existence follows from uniqueness. 
To investigate the latter, we consider the system: 
Find $(\delta\vec X,\kappa) \in \Vhpartial \times V^h$ such that
\begin{subequations}
\begin{align}
\left(\frac{\delta\vec X}{\ttau_m}, \chi\,\vec\nu^m\,|\vec
X^m_\rho|\right)^h
= \left(\lambda\,\kappa ,
\chi\,|\vec X^m_\rho|\right)^h 
\qquad \forall\ \chi \in V^h\,, \label{eq:proofa}\\
\left(\kappa\,\vec\nu^m, \vec\eta\,|\vec X^m_\rho|\right)^h
+ \left((\delta\vec X)_\rho, \vec\eta_\rho\,|\vec X^m_\rho|^{-1}\right) = 0 
\qquad \forall\ \vec\eta \in \Vhpartial\,,
\label{eq:proofb}
\end{align}
\end{subequations}
where we recall from (\ref{eq:calKm}) that $\lambda \in V^h$ with
\begin{equation} \label{eq:betam}
\lambda(q_j) = \begin{cases} 
1 & q_j \in \overline I \setminus \partial_0 I\,, \\
2 & q_j \in \partial_0 I\,.
\end{cases}
\end{equation}
Choosing $\chi=\kappa\in V^h$ in (\ref{eq:proofa}) and 
$\vec\eta= \delta\vec X \in \Vhpartial$ in (\ref{eq:proofb}) yields that
\begin{equation} \label{eq:unique0}
\left(|(\delta\vec X)_\rho|^2, |\vec X^m_\rho|^{-1}\right)
+\ttau_m \left(\lambda\,|\kappa|^2 , |\vec X^m_\rho|\right)^h = 0\,.
\end{equation}
It follows from (\ref{eq:unique0}) that $\kappa = 0$ and that
$\delta\vec X \equiv \vec X^c\in\bR^2$; and hence that
\begin{equation}
0 = \left(\vec X^c, \chi\,\vec\nu^m\,|\vec X^m_\rho|\right)^h = 
\vec X^c \,. \left(\vec\nu^m, \chi\,|\vec X^m_\rho|\right)^h 
\quad \forall\ \chi \in V^h \,. \label{eq:unique1}
\end{equation}
It follows from (\ref{eq:unique1}) and
assumption $(\mathfrak B)^h$ that $\vec X^c=\vec0$.
Hence we have shown that (\ref{eq:fda},b) has a unique solution
$(\delta\vec X^{m+1},\kappa^{m+1}) \in \Vhpartial\times V^h$.
\end{proof}

We remark that a fully discrete approximation of $(\GDmckappa_h)^h$, 
(\ref{eq:sddziuka},b), is given by:

$(\GDmckappa_m)^h$:
Let $\vec X^0 \in \Vhpartialzero$. For $m=0,\ldots,M-1$, 
find $(\delta\vec X^{m+1}, \vec\kappa^{m+1}) \in \Vhpartial \times \Vh$,
where $\vec X^{m+1} = \vec X^m + \delta \vec X^{m+1}$, 
such that
\begin{subequations}
\begin{align}
\left(\frac{\vec X^{m+1} - \vec X^m}{\ttau_m}, \vec\chi\,|\vec X^m_\rho|
\right)^h
= \left(\vec\kappa^{m+1} - \vec{\doctorkappa}^{m}(\vec\kappa^{m+1}),
\vec\chi\,|\vec X^m_\rho|\right)^h 
\qquad \forall\ \vec\chi \in \Vh\,, \label{eq:fddziuka}\\
\left(\vec\kappa^{m+1}, \vec\eta\,|\vec X^m_\rho|\right)^h
+ \left(\vec X^{m+1}_\rho, \vec\eta_\rho\,|\vec X^m_\rho|^{-1}\right)
= - \sum_{i=1}^2
\sum_{p \in \partial_i I} \sliprho^{(p)}\,\vec\eta(p)\,.\,\vec\ek_{3-i}
\qquad \forall\ \vec\eta \in \Vhpartial\,,
\label{eq:fddziukb}
\end{align}
\end{subequations}
where $\vec{\doctorkappa}^{m}(\vec\kappa^{m+1}) \in \Vh$ 
is such that
\begin{equation*} 
[\vec{\doctorkappa}^{m}(\vec\kappa^{m+1})](q_j) = \begin{cases}
\dfrac{\vec\omega^m(q_j)\,.\,\vec\ek_1}{\vec X^m(q_j)\,.\,\vec\ek_1}\,
\dfrac{\vec\omega^m(q_j)}{|\vec\omega^m(q_j)|^2} 
& q_j \in \overline I \setminus \partial_0 I\,, \\
- \vec\kappa^{m+1}(q_j) & q_j \in \partial_0 I\,.
\end{cases}
\end{equation*}
In practice the scheme {\rm (\ref{eq:fddziuka},b)}, 
for reasonable time step sizes, can lead to oscillations and poor results,
see e.g.\ Figure~\ref{fig:torusR1r05} below.

\begin{lem} \label{lem:GDex}
Let $\vec X^m \in \Vhpartialzero$ satisfy the assumption $(\mathfrak A)$.
There exists a unique solution
$(\delta\vec X^{m+1}, \vec\kappa^{m+1}) \in \Vhpartial \times \Vh$ to 
$(\GDmckappa_m)^h$.
\end{lem}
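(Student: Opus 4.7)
The plan is to mirror the proof of Lemma~\ref{lem:ex} while exploiting the fact that the curvature variable $\vec\kappa^{m+1}$ is now vector-valued, which I expect will allow us to dispense entirely with any analogue of assumption $(\mathfrak B)^h$. Since $(\GDmckappa_m)^h$ is a square linear system in $(\delta\vec X^{m+1}, \vec\kappa^{m+1}) \in \Vhpartial\times\Vh$, and since assumption $(\mathfrak A)$ ensures that the right hand side of (\ref{eq:fddziuka}) is well defined, existence will follow from uniqueness. Accordingly, I would study the associated homogeneous problem: find $(\delta\vec X, \vec\kappa) \in \Vhpartial \times \Vh$ such that
\begin{align*}
\left(\tfrac{\delta\vec X}{\ttau_m}, \vec\chi\,|\vec X^m_\rho|\right)^h
&= \left(\lambda\,\vec\kappa, \vec\chi\,|\vec X^m_\rho|\right)^h
\qquad \forall\ \vec\chi \in \Vh\,, \\
\left(\vec\kappa, \vec\eta\,|\vec X^m_\rho|\right)^h
+ \left((\delta\vec X)_\rho, \vec\eta_\rho\,|\vec X^m_\rho|^{-1}\right)
&= 0
\qquad \forall\ \vec\eta \in \Vhpartial\,,
\end{align*}
where $\lambda \in V^h$ is precisely the nodal multiplier from (\ref{eq:betam}); the value $\lambda(q_j)=2$ on $\partial_0 I$ reflects the identity $\vec\kappa - \vec{\doctorkappa}^m(\vec\kappa) = 2\,\vec\kappa$ at those nodes, while at every other node the $\vec X^m$-dependent component of $\vec{\doctorkappa}^m$ is independent of $\vec\kappa$ and so only contributes to the (already vanishing) data.

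Next, I would test the first equation with $\vec\chi = \vec\kappa \in \Vh$ and the second with $\vec\eta = \delta\vec X \in \Vhpartial$, and combine them to obtain the energy identity
\begin{equation*}
\left(|(\delta\vec X)_\rho|^2,\, |\vec X^m_\rho|^{-1}\right)
+ \ttau_m\,\left(\lambda\,|\vec\kappa|^2,\, |\vec X^m_\rho|\right)^h = 0\,.
\end{equation*}
Since $\lambda \geq 1$ pointwise and $|\vec X^m_\rho|>0$ almost everywhere by $(\mathfrak A)$, both non-negative contributions must vanish, forcing $\vec\kappa \equiv \vec 0$ and $(\delta\vec X)_\rho \equiv \vec 0$, so that $\delta\vec X \equiv \vec X^c$ for some constant vector $\vec X^c \in \bR^2$.

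The final step is to conclude $\vec X^c = \vec 0$. Inserting $\vec\kappa = \vec 0$ and $\delta\vec X = \vec X^c$ into the first homogeneous equation gives $(\vec X^c, \vec\chi\,|\vec X^m_\rho|)^h = 0$ for every $\vec\chi \in \Vh$. In contrast to Lemma~\ref{lem:ex}, the test function here is vector-valued, so choosing $\vec\chi = \chi_j\,\vec\ek_i$ for each basis index $j$ and each $i\in\{1,2\}$, and noting that $(\chi_j,|\vec X^m_\rho|)^h > 0$ by $(\mathfrak A)$, yields $\vec X^c\cdot\vec\ek_i = 0$ and therefore $\vec X^c = \vec 0$. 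This is the only genuinely new ingredient relative to the proof of Lemma~\ref{lem:ex}, and it is precisely the reason why no analogue of $(\mathfrak B)^h$ is required; I do not anticipate any further obstacle in the argument.
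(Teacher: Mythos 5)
Your proposal is correct and follows essentially the same route as the paper: the same homogeneous system, the same energy identity forcing $\vec\kappa=\vec 0$, and then the observation that the first equation, tested against arbitrary vector-valued $\vec\chi\in\Vh$, kills the remaining constant $\vec X^c$ without any analogue of $(\mathfrak B)^h$. Your remark that the fully vector-valued test space is exactly what makes $(\mathfrak B)^h$ unnecessary matches the paper's (much terser) argument precisely.
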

\begin{proof}
Similarly to the proof of Lemma~\ref{lem:ex}, we obtain that
\begin{equation} \label{eq:uniqueGD0}
\left(|(\delta\vec X)_\rho|^2, |\vec X^m_\rho|^{-1}\right)
+\ttau_m \left(\lambda\,|\vec\kappa|^2 , |\vec X^m_\rho|\right)^h = 0\,,
\end{equation}
where $(\delta\vec X,\vec\kappa) \in \Vhpartial \times \Vh$ solve
the linear homogeneous system corresponding to (\ref{eq:fddziuka},b).
It follows from (\ref{eq:uniqueGD0}) that $\vec\kappa = \vec 0$ and then
from the homogeneous variant of (\ref{eq:fddziuka}) 
that $\delta\vec X = \vec 0$.
Hence we have shown that (\ref{eq:fddziuka},b) has a unique solution
$(\delta\vec X^{m+1},\vec\kappa^{m+1}) \in \Vhpartial\times\Vh$.
\end{proof}

Our fully discrete analogues of the schemes $(\BGNmc_h)^{(h)}$,
(\ref{eq:mcsdnewa},b), and $(\GDmc_h)^{(h)}$, (\ref{eq:dziuksdnewa},b), 
are given as follows.

$(\BGNmc_m)^{(h)}$:
Let $\vec X^0 \in \Vhpartialzero$. For $m=0,\ldots,M-1$, 
find $(\delta\vec X^{m+1}, \kappa_{\mathcal{S}}^{m+1}) \in \Vhpartial \times
\Whpartialzero$, where $\vec X^{m+1} = \vec X^m + \delta \vec X^{m+1}$, 
such that
\begin{subequations}
\begin{align}
& \left(\vec X^m\,.\,\vec\ek_1\,
\frac{\vec X^{m+1} - \vec X^m}{\ttau_m}, \chi\,\vec\nu^m\,
|\vec X^m_\rho|\right)^{(h)}
= \left(\vec X^m\,.\,\vec\ek_1\,\kappa_{\mathcal{S}}^{m+1},
\chi\,|\vec X^m_\rho|\right)^{(h)} \quad \forall\ \chi \in \Whpartialzero\,,
\label{eq:fdnewa} \\
&
\left(\vec X^m\,.\,\vec\ek_1\,\kappa_{\mathcal{S}}^{m+1}\,\vec\nu^m,
\vec\eta\,|\vec X^m_\rho|\right)^{(h)}
+ \left( \vec\eta \,.\,\vec\ek_1, |\vec X^m_\rho|\right)
+ \left( (\vec X^m\,.\,\vec\ek_1)\,
\vec X^{m+1}_\rho,\vec\eta_\rho\, |\vec X^m_\rho|^{-1} \right)
\nonumber \\ & \qquad
= - \sum_{i=1}^2 \sum_{p \in \partial_i I} \sliprho^{(p)}\,
(\vec X^m(p)\,.\,\vec\ek_1)\,\vec\eta(p)\,.\,\vec\ek_{3-i}
\quad \forall\ \vec\eta \in \Vhpartial\,.
\label{eq:fdnewb}
\end{align}
\end{subequations}

For the second variant, which is going to lead to systems of nonlinear
equations and for which a stability result can be shown, we introduce
the notation $[ r ]_\pm = \pm \max \{ \pm r, 0 \}$ for $r \in \bR$.

$(\BGNmc_{m,\star})^{(h)}$:
Let $\vec X^0 \in \Vhpartialzero$. For $m=0,\ldots,M-1$, 
find $(\delta\vec X^{m+1}, \kappa_{\mathcal{S}}^{m+1}) \in \Vhpartial \times
\Whpartialzero$, where $\vec X^{m+1} = \vec X^m + \delta \vec X^{m+1}$, 
such that
\begin{subequations}
\begin{align}
& \left(\vec X^m\,.\,\vec\ek_1\,
\frac{\vec X^{m+1} - \vec X^m}{\ttau_m}, \chi\,\vec\nu^m\,
|\vec X^m_\rho|\right)^{(h)}
= \left(\vec X^m\,.\,\vec\ek_1\,\kappa_{\mathcal{S}}^{m+1},
\chi\,|\vec X^m_\rho|\right)^{(h)} \quad \forall\ \chi \in \Whpartialzero\,,
\label{eq:fdnonlineara} \\
&
\left(\vec X^m\,.\,\vec\ek_1\,\kappa_{\mathcal{S}}^{m+1}\,\vec\nu^m,
\vec\eta\,|\vec X^m_\rho|\right)^{(h)}
+ \left( \vec\eta \,.\,\vec\ek_1, |\vec X^{m+1}_\rho|\right)
+ \left( (\vec X^m\,.\,\vec\ek_1)\,
\vec X^{m+1}_\rho,\vec\eta_\rho\, |\vec X^m_\rho|^{-1} \right)
\nonumber \\ & \qquad
= - \sum_{p \in \partial_1 I} \sliprho^{(p)}\,
(\vec X^m(p)\,.\,\vec\ek_1)\,\vec\eta(p)\,.\,\vec\ek_2
\nonumber \\ & \qquad \quad
 - \sum_{p \in \partial_2 I} ( ( [\sliprho^{(p)}]_+ \,
\,\vec X^{m+1}(p)
+ [\sliprho^{(p)}]_- \,
\,\vec X^{m}(p))
\,.\,\vec\ek_1)\,\vec\eta(p)\,.\,\vec\ek_1
\quad \forall\ \vec\eta \in \Vhpartial\,.
\label{eq:fdnonlinearb}
\end{align}
\end{subequations}

$(\GDmc_m)^{(h)}$:
Let $\vec X^0 \in \Vhpartialzero$. For $m=0,\ldots,M-1$, 
find $(\delta\vec X^{m+1},\vec\kappa_{\mathcal{S}}^{m+1})\in \Vhpartial \times
\vecWhpartialzero$, where $\vec X^{m+1} = \vec X^m + \delta \vec X^{m+1}$, 
such that
\begin{subequations}
\begin{align}
& \left(\vec X^m\,.\,\vec\ek_1\,
\frac{\vec X^{m+1} - \vec X^m}{\ttau_m}, \vec\chi\,|\vec X^m_\rho|\right)^{(h)}
= \left((\vec X^m\,.\,\vec\ek_1)\,\vec\kappa_{\mathcal{S}}^{m+1},
\vec\chi\,|\vec X^m_\rho|\right)^{(h)} \quad \forall\ \vec\chi \in
\vecWhpartialzero\,,
\label{eq:fddziuknewa} \\
&
\left((\vec X^m\,.\,\vec\ek_1)\,\vec\kappa_{\mathcal{S}}^{m+1},
\vec\eta\,|\vec X^m_\rho|\right)^{(h)}
+ \left( \vec\eta \,.\,\vec\ek_1, |\vec X^m_\rho|\right)
+ \left( (\vec X^m\,.\,\vec\ek_1)\,
\vec X^{m+1}_\rho,\vec\eta_\rho\, |\vec X^m_\rho|^{-1} \right)
\nonumber \\ & \hspace{4cm}
= - \sum_{i=1}^2 \sum_{p \in \partial_i I} \sliprho^{(p)}\,
(\vec X^m(p)\,.\,\vec\ek_1)\,\vec\eta(p)\,.\,\vec\ek_{3-i}
\quad \forall\ \vec\eta \in \Vhpartial\,.
\label{eq:fddziuknewb}
\end{align}
\end{subequations}

$(\GDmc_{m,\star})^{(h)}$:
Let $\vec X^0 \in \Vhpartialzero$. For $m=0,\ldots,M-1$, 
find $(\delta\vec X^{m+1},\vec\kappa_{\mathcal{S}}^{m+1})\in \Vhpartial \times
\vecWhpartialzero$, where $\vec X^{m+1} = \vec X^m + \delta \vec X^{m+1}$, 
such that
\begin{subequations}
\begin{align}
& \left(\vec X^m\,.\,\vec\ek_1\,
\frac{\vec X^{m+1} - \vec X^m}{\ttau_m}, \vec\chi\,|\vec X^m_\rho|\right)^{(h)}
= \left((\vec X^m\,.\,\vec\ek_1)\,\vec\kappa_{\mathcal{S}}^{m+1},
\vec\chi\,|\vec X^m_\rho|\right)^{(h)} \quad \forall\ \vec\chi \in
\vecWhpartialzero\,,
\label{eq:dziukfdnonlineara} \\
&
\left((\vec X^m\,.\,\vec\ek_1)\,\vec\kappa_{\mathcal{S}}^{m+1},
\vec\eta\,|\vec X^m_\rho|\right)^{(h)}
+ \left( \vec\eta \,.\,\vec\ek_1, |\vec X^{m+1}_\rho|\right)
+ \left( (\vec X^m\,.\,\vec\ek_1)\,
\vec X^{m+1}_\rho,\vec\eta_\rho\, |\vec X^m_\rho|^{-1} \right)
\nonumber \\ & \qquad
= - \sum_{p \in \partial_1 I} \sliprho^{(p)}\,
(\vec X^m(p)\,.\,\vec\ek_1)\,\vec\eta(p)\,.\,\vec\ek_2
\nonumber \\ & \qquad \quad
 - \sum_{p \in \partial_2 I} ( ( [\sliprho^{(p)}]_+ \,
\,\vec X^{m+1}(p)
+ [\sliprho^{(p)}]_- \,
\,\vec X^{m}(p))
\,.\,\vec\ek_1)\,\vec\eta(p)\,.\,\vec\ek_1
\quad \forall\ \vec\eta \in \Vhpartial\,.
\label{eq:dziukfdnonlinearb}
\end{align}
\end{subequations}
Here we observe that $(\BGNmc_m)^{(h)}$ and $(\GDmc_m)^{(h)}$ are
linear schemes, while $(\BGNmc_{m,\star})^{(h)}$ and
$(\GDmc_{m,\star})^{(h)}$ are nonlinear. For the linear schemes we can
prove existence and uniqueness, while for the nonlinear schemes we can prove
unconditional stability. For the scheme $(\GDmc_{m,\star})^{(h)}$ we can also
prove existence if $\partial_0 I = \emptyset$ and if 
$\ttau_m$ is sufficiently small. It does not appear possible to extend the
techniques of the existence proof for $(\GDmc_{m,\star})^{(h)}$ to the scheme
$(\BGNmc_{m,\star})^{(h)}$.
We note that in practice we solve the nonlinear
schemes with a Newton iteration, which in all our experiments 
always converged with at most three iterations.

\begin{rem} \label{rem:oneliner}
Similarly to the semidiscrete variants,
we observe that in most of the above fully discrete schemes it is possible to
eliminate the discrete curvatures, $\kappa_{\mathcal{S}}^{m+1}$ or
$\vec\kappa_{\mathcal{S}}^{m+1}$. 
For example, on recalling {\rm (\ref{eq:omegah})} and
on choosing $\chi = \pi^h[\vec\eta\,.\,\vec\omega^m] \in \Whpartialzero$ in
{\rm (\ref{eq:fdnonlineara})} for $\vec\eta \in \Vhpartial$, the scheme
$(\BGNmc_{m,\star})^h$ reduces to: 
Find $\delta\vec X^{m+1}\in \Vhpartial$ such that, for all
$\vec\eta \in \Vhpartial$,
\begin{align}
& \left(\vec X^m\,.\,\vec\ek_1\,
\frac{\vec X^{m+1} - \vec X^m}{\ttau_m}\,.\,\vec\omega^m, \vec\eta\,.\,
\vec\omega^m\,|\vec X^m_\rho|\right)^h
+ \left( \vec\eta \,.\,\vec\ek_1, |\vec X^{m+1}_\rho|\right)
\nonumber \\ & \quad
+ \left( (\vec X^m\,.\,\vec\ek_1)\,
\vec X^{m+1}_\rho,\vec\eta_\rho\, |\vec X^m_\rho|^{-1} \right)
= - \sum_{i=1}^2 
\sum_{p \in \partial_i I} \sliprho^{(p)}\,
(\vec X^{m+1}(p)\,.\,\vec\ek_1)\,\vec\eta(p)\,.\,\vec\ek_{3-i}\,.
\label{eq:fdnonlinear}
\end{align}
and similarly for $(\BGNmc_m)^h$, $(\GDmc_m)^{(h)}$ and
$(\GDmc_{m,\star})^{(h)}$, with the latter leading to
(\ref{eq:dziukfdnonlinearb}) with $\vec\kappa^{m+1}_{\mathcal{S}}$ replaced by 
$(\ttau_m)^{-1}\,(\vec X^{m+1} - \vec X^m)$.
For the schemes $(\BGNmc_m)$ and $(\BGNmc_{m,\star})$
this elimination procedure is not possible.
A related variant to {\rm (\ref{eq:fdnonlinear})} is given by:
Find $\delta\vec X^{m+1}\in \Vhpartial$ such that, for all
$\vec\eta \in \Vhpartial$,
\begin{align}
& \left(\vec X^m\,.\,\vec\ek_1\,
\frac{\vec X^{m+1} - \vec X^m}{\ttau_m}\,.\,\vec\nu^m, \vec\eta\,.\,
\vec\nu^m\,|\vec X^m_\rho|\right)^h
+ \left( \vec\eta \,.\,\vec\ek_1, |\vec X^{m+1}_\rho|\right)
\nonumber \\ & \quad
+ \left( (\vec X^m\,.\,\vec\ek_1)\,
\vec X^{m+1}_\rho,\vec\eta_\rho\, |\vec X^m_\rho|^{-1} \right)
= - \sum_{i=1}^2
\sum_{p \in \partial_i I} \sliprho^{(p)}\,
(\vec X^{m+1}(p)\,.\,\vec\ek_1)\,\vec\eta(p)\,.\,\vec\ek_{3-i}\,.
\label{eq:fdnonlinearnu}
\end{align}
\end{rem}

We make the following mild assumption. 
\begin{tabbing}
$(\mathfrak C_{(\partial_0)})^{(h)}$ \quad \=
Let $\mathcal Z^{(h)}_{(\partial_0)} = 
\left\{ \left( (\vec X^m\,.\,\vec\ek_1)\,\vec\nu^m,\chi\, 
|\vec X^m_\rho| \right)^{(h)} : \chi \in \Whpartialzero \right
\} \subset \bR^2$ and assume that \\ \>
$\dim \spa \mathcal Z^{(h)}_{(\partial_0)} = 2$. 
\end{tabbing}
Note that the assumption $(\mathfrak C_{\partial_0})^{h}$, on recalling
(\ref{eq:omegah}), is equivalent to
assuming that \linebreak 
$\dim \spa\{\vec{\omega}^m(q_j)\}_{
j \in \{ k \in \{0,\ldots,J\} : q_k \in \overline I \setminus \partial_0 I\}}
= 2$,
and so it is slightly stronger than the assumption $(\mathfrak B)^h$.

\begin{lem} \label{lem:exnew}
Let $\vec X^m \in \Vhpartialzero$ satisfy the assumptions $(\mathfrak A)$ and
$(\mathfrak C_{(\partial_0)})^{(h)}$.
Then there exists a unique solution
$(\delta\vec X^{m+1},\kappa_{\mathcal{S}}^{m+1}) \in \Vhpartial \times 
\Whpartialzero$ to $(\BGNmc_m)^{(h)}$.
\end{lem}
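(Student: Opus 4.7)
The plan is to mirror the proof of Lemma~\ref{lem:ex}. The system $(\BGNmc_m)^{(h)}$ is linear in the unknowns $(\delta\vec X^{m+1},\kappa_{\mathcal{S}}^{m+1})$ (all coefficients involve only the known iterate $\vec X^m$), so existence reduces to uniqueness of the associated homogeneous system: find $(\delta\vec X,\kappa) \in \Vhpartial \times \Whpartialzero$ solving (\ref{eq:fdnewa},b) with $\vec X^{m+1}$ replaced by $\delta\vec X$ and with all right hand sides and all $\vec X^m$-only terms discarded. The driving energy estimate comes from testing the first equation with $\chi = \kappa \in \Whpartialzero$, multiplying by $\ttau_m$, and testing the second with $\vec\eta = \delta\vec X \in \Vhpartial$. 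The two coupling terms of the form $((\vec X^m\,.\,\vec\ek_1)\,\kappa\,\vec\nu^m,\delta\vec X\,|\vec X^m_\rho|)^{(h)}$ cancel, yielding
\[
\ttau_m \left((\vec X^m\,.\,\vec\ek_1)\,|\kappa|^2, |\vec X^m_\rho|\right)^{(h)} + \left((\vec X^m\,.\,\vec\ek_1)\,|(\delta\vec X)_\rho|^2, |\vec X^m_\rho|^{-1}\right) = 0.
\]

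Next I would conclude that each non-negative summand vanishes separately. For the first: in the mass-lumped variant it is a nodal sum whose weights $\vec X^m(q_j)\,.\,\vec\ek_1$ are strictly positive for every $q_j \in \overline I \setminus \partial_0 I$ by $(\mathfrak A)$, forcing $\kappa(q_j)=0$ there, and combined with the defining constraint $\kappa \in W^h_{\partial_0}$ this gives $\kappa \equiv 0$; in the exact-integration variant with $\Whpartialzero = V^h$, the weight $(\vec X^m\,.\,\vec\ek_1)\,|\vec X^m_\rho|$ is positive a.e.\ on $I$ so continuity of $\kappa$ forces $\kappa \equiv 0$. For the second summand, $(\delta\vec X)_\rho$ is constant on each subinterval $I_j$ while $(\vec X^m\,.\,\vec\ek_1)\,|\vec X^m_\rho|^{-1}$ has positive measure on every such $I_j$ (each $I_j$ contains interior points with $\vec X^m\,.\,\vec\ek_1>0$), so $(\delta\vec X)_\rho \equiv 0$ on $\overline I$ and $\delta\vec X \equiv \vec X^c$ for some $\vec X^c \in \bR^2$.

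Finally, substituting $\kappa \equiv 0$ and $\delta\vec X \equiv \vec X^c$ back into the first homogeneous equation gives
\[
\vec X^c \,.\, \left((\vec X^m\,.\,\vec\ek_1)\,\vec\nu^m, \chi\,|\vec X^m_\rho|\right)^{(h)} = 0 \qquad \forall\ \chi \in \Whpartialzero,
\]
which by assumption $(\mathfrak C_{(\partial_0)})^{(h)}$ — precisely the statement that these two-vectors span $\bR^2$ as $\chi$ ranges over $\Whpartialzero$ — forces $\vec X^c = \vec 0$. This establishes uniqueness, and hence existence, of $(\delta\vec X^{m+1},\kappa_{\mathcal{S}}^{m+1})$.

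The main delicate point is handling the degeneracy of the weight $\vec X^m\,.\,\vec\ek_1$ at $\partial_0 I$: this is the structural reason the curvature space is restricted to $\Whpartialzero$ in the mass-lumped case (so that positivity of the nodal weights away from the axis propagates to $\kappa$ globally), and also the reason one must strengthen $(\mathfrak B)^h$ to $(\mathfrak C_{(\partial_0)})^{(h)}$, since restricting the test functions $\chi$ to $\Whpartialzero$ removes precisely the axial contributions and so the span condition must be imposed on the correspondingly restricted family.
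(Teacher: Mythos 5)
Your proposal is correct and follows essentially the same route as the paper: reduce existence to uniqueness of the linear homogeneous system, test with $\chi=\kappa_{\mathcal S}$ and $\vec\eta=\delta\vec X$ to get the sum of two non-negative terms equal to zero, conclude $\kappa_{\mathcal S}=0$ and $\delta\vec X\equiv\vec X^c$, and then invoke $(\mathfrak C_{(\partial_0)})^{(h)}$ to force $\vec X^c=\vec 0$. The only differences are cosmetic — you place the factor $\ttau_m$ on the curvature term rather than the gradient term (immaterial, since both terms are non-negative) and you spell out in more detail than the paper why the degenerate weight $\vec X^m\,.\,\vec\ek_1$ still forces $\kappa_{\mathcal S}=0$ and $(\delta\vec X)_\rho=0$ in both the mass-lumped and exact-integration variants.
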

\begin{proof}
As (\ref{eq:fdnewa},b) is linear, existence follows from uniqueness. 
To investigate the latter, we consider the system: 
Find $(\delta\vec X, \kappa_{\mathcal{S}}) \in \Vhpartial\times \Whpartialzero$ 
such that
\begin{subequations}
\begin{align}
&
\left(\vec X^m\,.\,\vec\ek_1\,\frac{\delta\vec X}{\ttau_m}, 
\chi\,\vec\nu^m\,|\vec X^m_\rho|\right)^{(h)}
= \left(\vec X^m\,.\,\vec\ek_1\,\kappa_{\mathcal{S}},
\chi\,|\vec X^m_\rho|\right)^{(h)} \qquad \forall\ \chi \in \Whpartialzero\,,
\label{eq:proofnewa} \\
& \left(\vec X^m\,.\,\vec\ek_1\,\kappa_{\mathcal{S}}\,\vec\nu^m,
\vec\eta\,|\vec X^m_\rho|\right)^{(h)}
+ \left((\vec X^m\,.\,\vec\ek_1)\,
(\delta\vec X)_\rho, \vec\eta_\rho\,|\vec X^m_\rho|^{-1}\right) 
= 0 \qquad \forall\ \vec\eta \in \Vhpartial\,.
\label{eq:proofnewb}
\end{align}
\end{subequations}
Choosing $\chi = \kappa_{\mathcal{S}} \in \Whpartialzero$ 
in (\ref{eq:proofnewa}) and 
$\vec\eta= \delta\vec X \in \Vhpartial$ in (\ref{eq:proofnewb}) yields that
\begin{equation} \label{eq:uniquenew0}
\ttau_m
\left( \vec X^m\,.\,\vec\ek_1 \,
|(\delta\vec X)_\rho|^2, |\vec X^m_\rho|^{-1}\right)
+ \left(\vec X^m \,.\,\vec\ek_1\,
|\kappa_{\mathcal{S}}|^2, |\vec X^m_\rho| \right)^{(h)} = 0\,.
\end{equation}
It immediately
follows from (\ref{eq:uniquenew0})
and the assumption $(\mathfrak A)$ that $\kappa_{\mathcal{S}} = 0$,
and that $\delta\vec X \equiv \vec X^c\in\bR^2$.
Hence it follows from 
(\ref{eq:proofnewa}) that
$\vec X^c\,.\,\vec z = 0$ for all $\vec z \in \mathcal Z^{(h)}_{(\partial_0)}$,
and so assumption $(\mathfrak C_{(\partial_0)})^{(h)}$ yields 
that $\vec X^c = \vec 0$.
Hence we have shown that $(\BGNmc_m)^{(h)}$ has a unique solution
$(\delta\vec X^{m+1},\kappa_{\mathcal{S}}^{m+1}) \in \Vhpartial \times
\Whpartialzero$.
\end{proof}

\begin{lem} \label{lem:exdziuk}
Let $\vec X^m \in \Vhpartialzero$ satisfy the assumption $(\mathfrak A)$.
Then there exists a unique solution 
$(\delta\vec X^{m+1},\vec\kappa_{\mathcal{S}}^{m+1}) \in \Vhpartial \times 
\vecWhpartialzero$ to $(\GDmc_m)^{(h)}$. 
\end{lem}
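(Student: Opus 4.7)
The plan is to follow essentially the template of Lemma~\ref{lem:exnew}. Since $(\GDmc_m)^{(h)}$, i.e.\ (\ref{eq:fddziuknewa},b), is linear in the unknowns $(\delta\vec X^{m+1}, \vec\kappa_{\mathcal{S}}^{m+1})$, existence reduces to uniqueness. So I would pass to the associated homogeneous system: find $(\delta\vec X, \vec\kappa_{\mathcal{S}}) \in \Vhpartial \times \vecWhpartialzero$ satisfying
\begin{subequations}
\begin{align}
\left(\vec X^m\,.\,\vec\ek_1\,\tfrac{\delta\vec X}{\ttau_m}, \vec\chi\,|\vec X^m_\rho|\right)^{(h)}
&= \left((\vec X^m\,.\,\vec\ek_1)\,\vec\kappa_{\mathcal{S}}, \vec\chi\,|\vec X^m_\rho|\right)^{(h)} \quad \forall\ \vec\chi \in \vecWhpartialzero\,,\label{eq:planA}\\
\left((\vec X^m\,.\,\vec\ek_1)\,\vec\kappa_{\mathcal{S}}, \vec\eta\,|\vec X^m_\rho|\right)^{(h)}
&+ \left((\vec X^m\,.\,\vec\ek_1)\,(\delta\vec X)_\rho, \vec\eta_\rho\,|\vec X^m_\rho|^{-1}\right)
= 0 \quad \forall\ \vec\eta \in \Vhpartial\,.\label{eq:planB}
\end{align}
\end{subequations}

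Next I would test with $\vec\chi = \vec\kappa_{\mathcal{S}} \in \vecWhpartialzero$ in (\ref{eq:planA}) and $\vec\eta = \delta\vec X \in \Vhpartial$ in (\ref{eq:planB}) and combine, producing the identity
\begin{equation*}
\ttau_m\left((\vec X^m\,.\,\vec\ek_1)\,|(\delta\vec X)_\rho|^2, |\vec X^m_\rho|^{-1}\right)
+ \left((\vec X^m\,.\,\vec\ek_1)\,|\vec\kappa_{\mathcal{S}}|^2, |\vec X^m_\rho|\right)^{(h)} = 0\,.
\end{equation*}
Under assumption $(\mathfrak A)$, the weight $\vec X^m\,.\,\vec\ek_1$ is strictly positive on $\overline I \setminus \partial_0 I$ and $|\vec X^m_\rho|>0$ a.e., so both terms are nonnegative and vanish separately. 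The second term forces $\vec\kappa_{\mathcal{S}} = \vec 0$, while the first (a true integral, not mass-lumped) forces $(\delta\vec X)_\rho = \vec 0$ a.e. on $I$, hence $\delta\vec X \equiv \vec X^c \in \bR^2$.

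It remains to show $\vec X^c = \vec 0$. With $\vec\kappa_{\mathcal{S}} = \vec 0$, relation (\ref{eq:planA}) reduces to $\vec X^c \,.\,\bigl(\vec X^m\,.\,\vec\ek_1\,\vec\chi, |\vec X^m_\rho|\bigr)^{(h)} = 0$ for all $\vec\chi \in \vecWhpartialzero$. In contrast to the $(\BGNmc_m)^{(h)}$ case, here there is no factor of $\vec\nu^m$ restricting the test direction, so no hypothesis like $(\mathfrak C_{(\partial_0)})^{(h)}$ is needed. The key observation is that, since $\vec X^m \in \Vhpartialzero$, the scalar function $\chi_\star := \vec X^m\,.\,\vec\ek_1$ itself lies in $W^h_{\partial_0}$. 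Plugging $\vec\chi = \chi_\star\,\vec\ek_i$ for $i=1,2$ into the relation yields $(\vec X^c \,.\,\vec\ek_i)\bigl((\vec X^m\,.\,\vec\ek_1)^2, |\vec X^m_\rho|\bigr)^{(h)} = 0$, and the weight in brackets is strictly positive by $(\mathfrak A)$. Therefore $\vec X^c\,.\,\vec\ek_i = 0$ for $i=1,2$, hence $\vec X^c = \vec 0$, completing the uniqueness argument.

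I expect the only mildly subtle point to be this last paragraph: one needs to notice that $\vec X^m\,.\,\vec\ek_1$ is itself an admissible test scalar in $W^h_{\partial_0}$ (precisely because $\vec X^m \in \Vhpartialzero$), which is what lets us dispense with the extra spanning hypothesis $(\mathfrak C_{(\partial_0)})^{(h)}$ that was indispensable in Lemma~\ref{lem:exnew}. Aside from that, the proof is essentially parallel to Lemma~\ref{lem:exnew}, with the coercive term $|(\delta\vec X)_\rho|^2$ weighted by $\vec X^m\,.\,\vec\ek_1$ replacing its unweighted analogue in (\ref{eq:uniquenew0}).
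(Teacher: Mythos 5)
Your proof is correct and follows essentially the same route as the paper: linearity reduces existence to uniqueness, testing the homogeneous system with $\vec\chi=\vec\kappa_{\mathcal S}$ and $\vec\eta=\delta\vec X$ yields exactly the paper's identity (\ref{eq:uniquedziuk0}), from which $\vec\kappa_{\mathcal S}=\vec 0$ and $\delta\vec X\equiv\vec X^c$ follow under $(\mathfrak A)$. The paper merely asserts that the homogeneous variant of (\ref{eq:fddziuknewa}) then forces $\vec X^c=\vec 0$; your explicit choice $\vec\chi=(\vec X^m\,.\,\vec\ek_1)\,\vec\ek_i\in\vecWhpartialzero$ is a valid way to carry out that final step in both the lumped and non-lumped cases, and correctly explains why no spanning hypothesis of type $(\mathfrak C_{(\partial_0)})^{(h)}$ is needed here.
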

\begin{proof}
Similarly to the proof of Lemma~\ref{lem:exnew}, we obtain that
\begin{equation} \label{eq:uniquedziuk0}
\ttau_m \left( \vec X^m\,.\,\vec\ek_1 \,
|(\delta\vec X)_\rho|^2, |\vec X^m_\rho|^{-1}\right)
+ \left(\vec X^m \,.\,\vec\ek_1 \,|\vec\kappa_{\mathcal{S}}|^2, 
|\vec X^m_\rho| \right)^{(h)} = 0 \,,
\end{equation}
where $(\delta\vec X,\vec\kappa_{\mathcal{S}}) \in \Vhpartial \times
\vecWhpartialzero$ solve the linear homogeneous system corresponding to
(\ref{eq:fddziuknewa},b).
It immediately follows from (\ref{eq:uniquedziuk0})
and the assumption $(\mathfrak A)$ that $\vec\kappa_{\mathcal{S}} = \vec0$, 
and that $\delta\vec X = \vec X^c \in \bR^2$.
Combined with 
the homogeneous variant of (\ref{eq:fddziuknewa}) these imply that
$\vec X^c = \vec 0$.
Hence we have shown that (\ref{eq:fddziuknewa},b) has a unique solution
$(\delta\vec X^{m+1},\vec\kappa_{\mathcal{S}}^{m+1}) \in \Vhpartial \times
\vecWhpartialzero$.
\end{proof}

\begin{thm} \label{thm:C1}
Let $\partial_0 I = \emptyset$ and let $\vec X^m \in \Vh$ satisfy
the assumption $(\mathfrak A)$. 
Then there exists a solution 
$(\delta\vec X^{m+1},\vec\kappa_{\mathcal{S}}^{m+1}) \in \Vhpartial\times 
\vecWhpartialzero$ to $(\GDmc_{m,\star})^{(h)}$, 
{\rm (\ref{eq:dziukfdnonlineara},b)}, if $\ttau_m < 3\,
\min_{\overline I} (\vec X^m\,.\,\vec\ek_1)^2$.
\end{thm}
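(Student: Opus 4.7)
My plan is to prove Theorem~\ref{thm:C1} via a Brouwer fixed-point argument, leveraging the linear theory already developed in Lemma~\ref{lem:exdziuk}. Under the hypotheses $\partial_0 I = \emptyset$ and $(\mathfrak A)$, the weight $\vec X^m\,.\,\vec\ek_1$ is uniformly positive on $\overline I$, with lower bound $c_0 := \min_{\overline I}(\vec X^m\,.\,\vec\ek_1) > 0$. First, I would exploit this positivity to eliminate the discrete curvature $\vec\kappa_{\mathcal{S}}^{m+1}$: choosing $\vec\chi = \vec\kappa_{\mathcal{S}}^{m+1} - (\ttau_m)^{-1}(\vec X^{m+1} - \vec X^m) \in \vecWhpartialzero = \Vh$ in (\ref{eq:dziukfdnonlineara}) and exploiting strict positivity of the weight $(\vec X^m\,.\,\vec\ek_1)\,|\vec X^m_\rho|$ in the associated inner product yields $\vec\kappa_{\mathcal{S}}^{m+1} = (\ttau_m)^{-1}(\vec X^{m+1} - \vec X^m)$, as indicated in Remark~\ref{rem:oneliner}. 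Substituting this identity into (\ref{eq:dziukfdnonlinearb}) reduces $(\GDmc_{m,\star})^{(h)}$ to a single nonlinear variational equation in $\vec X^{m+1} \in \vec X^m + \Vhpartial$ whose only nonlinearity is the term $(\vec\eta\,.\,\vec\ek_1,\, |\vec X^{m+1}_\rho|)$.

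Next I would introduce the fixed-point map $\Phi : \Vh \to \vec X^m + \Vhpartial$ by freezing the nonlinearity: given $\vec Z \in \Vh$, let $\Phi(\vec Z) = \vec X$ solve the linearised problem obtained by replacing $|\vec X^{m+1}_\rho|$ with $|\vec Z_\rho|$ in the reduced equation. Well-posedness of this linear problem follows by an argument parallel to the proof of Lemma~\ref{lem:exdziuk}: the extra contribution at $\partial_2 I$ involving $[\sliprho^{(p)}]_+\, \vec X^{m+1}(p)\,.\,\vec\ek_1$ is monotone in $\vec X^{m+1}-\vec X^m$ and only enhances coercivity, while $-(\vec\eta\,.\,\vec\ek_1,\, |\vec Z_\rho|)$ becomes a bounded linear functional of $\vec\eta$. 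Continuity of $\Phi$ follows from standard finite-dimensional perturbation theory.

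The crux of the proof is then the a priori bound. Testing the linearised equation with $\vec\eta = \vec X - \vec X^m \in \Vhpartial$, splitting $\vec X^{m+1}_\rho = \vec X^m_\rho + (\vec X - \vec X^m)_\rho$ in the implicit-gradient term in order to isolate a non-negative quadratic in $(\vec X - \vec X^m)_\rho$, and controlling the remaining cross term $((\vec X - \vec X^m)\,.\,\vec\ek_1,\, |\vec Z_\rho|)$ via Cauchy--Schwarz in the $(\vec X^m\,.\,\vec\ek_1)\,|\vec X^m_\rho|$-weighted inner product, produces an inequality of the schematic form
\begin{equation*}
\tfrac{1}{\ttau_m}\,\|\vec X - \vec X^m\|_w^2 + \|(\vec X - \vec X^m)_\rho\|_{w'}^2 \leq \alpha\,\|\vec X - \vec X^m\|_w\,\|\vec Z - \vec X^m\|_\star + \text{lower-order terms},
\end{equation*}
where $\|\cdot\|_w^2 = ((\vec X^m\,.\,\vec\ek_1)\,|\cdot|^2,\, |\vec X^m_\rho|)^{(h)}$ and $\alpha$ is a constant that, after invoking $\vec X^m\,.\,\vec\ek_1 \geq c_0$ together with the mass-lumping equivalence (\ref{eq:normequiv}) (whose factor $3$ is precisely the source of the constant in the theorem), is controlled by a multiple of $c_0^{-1}$. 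The hypothesis $\ttau_m < 3\,c_0^2$ then permits absorbing the right-hand side into an estimate of the form $\|\Phi(\vec Z) - \vec X^m\|_\star \leq \beta\,\|\vec Z - \vec X^m\|_\star + C$ with $\beta < 1$, so that $\Phi$ maps a sufficiently large closed ball of $\Vh$ into itself.

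Because $\Vh$ is finite-dimensional and $\Phi$ is continuous, Brouwer's fixed-point theorem applies on this closed ball and yields a fixed point $\vec X^{m+1}$; together with the recovered $\vec\kappa_{\mathcal{S}}^{m+1} = (\ttau_m)^{-1}(\vec X^{m+1} - \vec X^m)$, this provides the desired solution of $(\GDmc_{m,\star})^{(h)}$. The main obstacle is the third step: aligning the constants in the weighted norms carefully enough so that the sharp threshold $\ttau_m < 3\,\min_{\overline I}(\vec X^m\,.\,\vec\ek_1)^2$ emerges exactly, rather than a mesh- or $|\vec X^m_\rho|$-dependent condition. Once the a priori estimate is secured, the topological conclusion is entirely routine.
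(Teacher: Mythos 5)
Your overall strategy --- eliminate $\vec\kappa_{\mathcal{S}}^{m+1}$ via (\ref{eq:dziukfdnonlineara}) (which is correct, and matches Remark~\ref{rem:oneliner}), reduce to a single nonlinear equation for $\delta\vec X^{m+1}$, and invoke Brouwer in the finite-dimensional space $\Vhpartial$ --- is in the right family, but the paper does not use a frozen-coefficient self-map. It defines the full nonlinear residual $\mathcal{F}_h^{(h)}$ and verifies $\left(\mathcal{F}_h^{(h)}(\vec\eta),\vec\eta\right)^h\geq 0$ on a sphere of large radius, then cites the zero-of-a-vector-field corollary of Brouwer \cite[Prop.~2.8]{Zeidler86}. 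This difference is not cosmetic: it is exactly what makes the stated time-step threshold attainable, and it is where your version has a genuine gap.

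The problematic step is your a priori bound for the map $\Phi$. After freezing $|\vec X^{m+1}_\rho|\mapsto|\vec Z_\rho|$ and testing with $\vec\eta=\vec X-\vec X^m$, the dangerous contribution is
$\left(|\vec X-\vec X^m|,\,|(\vec Z-\vec X^m)_\rho|\right)$,
a cross term between the $L^2$-size of the \emph{new} iterate and the gradient of the \emph{old} one. A Young inequality splits it into $a\left(|\vec X-\vec X^m|^2,|\vec X^m_\rho|\right)+\tfrac1{4a}\left(|(\vec Z-\vec X^m)_\rho|^2,|\vec X^m_\rho|^{-1}\right)$; the first piece must be absorbed by the $\ttau_m^{-1}$-term (forcing $\ttau_m\lesssim \mu/a$ with $\mu=\min_{\overline I}\vec X^m\,.\,\vec\ek_1$), while the second must be dominated by the coefficient of $\|(\vec X-\vec X^m)_\rho\|^2$ retained on the left (forcing $a\gtrsim\mu^{-1}$, up to the factors lost to the remaining lower-order terms). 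Chasing these constraints yields a self-map only under a condition strictly stronger than $\ttau_m<3\,\mu^2$ (roughly $\ttau_m<2\,\mu^2$ in the best case), so the argument as designed does not prove the theorem with the stated threshold. In the paper's version the analogous cross term is $\left(|\vec\eta|,|\vec\eta_\rho|\right)$ with the \emph{same} function on both sides, see (\ref{eq:HGbound2}); its gradient part is then absorbed exactly by the elliptic term $\mu\left(|\vec\eta_\rho|^2,|\vec X^m_\rho|^{-1}\right)$ already present in (\ref{eq:HGbound1}), leaving only an $L^2$-term to be controlled by $\mu\,\ttau_m^{-1}$, which is what produces the clean restriction in (\ref{eq:HGbound5}). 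Your frozen-point construction structurally cannot reproduce this cancellation. Separately, your attribution of the factor $3$ to the mass-lumping equivalence (\ref{eq:normequiv}) is incorrect: in the paper the $3$ arises from rounding $\tfrac78\,\mu\,\ttau_m^{-1}-\tfrac14\,\mu^{-1}$ down to $\tfrac14(3\,\mu\,\ttau_m^{-1}-\mu^{-1})$ after the Young-inequality bookkeeping; (\ref{eq:normequiv}) is only used to pass between lumped and exact inner products and contributes nothing to the threshold. Since you yourself flag the constant-matching as the unresolved ``main obstacle,'' the honest conclusion is that the decisive step of the proof is missing, and the remedy is to abandon the frozen map and estimate the full nonlinear form $\left(\mathcal{F}_h^{(h)}(\vec\eta),\vec\eta\right)^h$ directly as in (\ref{eq:HGbound1})--(\ref{eq:HGbound5}).
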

\begin{proof}
Let $\mathcal{F}_h^{(h)} : \Vhpartial \to \Vhpartial$ be such that for any
$\vec\chi\in\Vhpartial$ it holds that
\begin{align*} 
 \left(\mathcal{F}_h^{(h)}(\vec\chi), \vec\eta\right)^h & = 
\left(\vec X^m\,.\,\vec\ek_1\,\frac{\vec\chi}{\ttau_m},
\vec\eta\,|\vec X^m_\rho|\right)^{(h)}
+ \left( \vec\eta \,.\,\vec\ek_1, 
|\vec X^{m}_\rho + \vec\chi_\rho|\right)
\nonumber \\ & \quad
+ \left( (\vec X^m\,.\,\vec\ek_1)\,
(\vec X^{m}_\rho + \vec\chi_\rho), 
\vec\eta_\rho\, |\vec X^m_\rho|^{-1} \right) 
+ \sum_{p \in \partial_1 I} \sliprho^{(p)}\,
(\vec X^{m}(p)\,.\,\vec\ek_1)\,\vec\eta(p)\,.\,\vec\ek_2
\nonumber \\ & \quad
+ \sum_{p \in \partial_2 I} \left( \sliprho^{(p)}\,
\vec X^{m}(p)\,.\,\vec\ek_1 + [\sliprho^{(p)}]_+\,
\vec\chi(p)\,.\,\vec\ek_1 \right) \vec\eta(p)\,.\,\vec\ek_1
\qquad \forall\ \vec\eta \in \Vhpartial\,. 
\end{align*}
Upon eliminating $\vec\kappa_S^{m+1}$ from $(\GDmc_{m,\star})^{(h)}$, we can
rewrite it as:
Given $\vec X^m \in \Vhpartialzero$,
find $\delta\vec X^{m+1}\in \Vhpartial$ such that
\begin{equation} \label{eq:Dnew}
\left(\mathcal{F}_h^{(h)}(\delta\vec X^{m+1}),\vec\eta\right)^h
= 0 \qquad \forall\ \vec\eta \in \Vhpartial\,,
\end{equation}
which is equivalent to writing 
$\mathcal{F}_h^{(h)}(\delta\vec X^{m+1}) = \vec 0 \in \Vhpartial$.

On recalling assumption $(\mathfrak A)$, we note that 
$\partial_0 I = \emptyset$ implies that 
$\mu = \min_{\overline I} \vec X^m\,.\,\vec\ek_1 > 0$. It holds that
\begin{align}
& \left( \mathcal{F}_h^{(h)}(\vec\eta),\vec\eta\right)^h
 \geq \mu\,(\ttau_m)^{-1} \left( |\vec\eta|^2, |\vec X^m_\rho|\right)^{(h)}
- \left( |\vec\eta\,.\,\vec\ek_1|, 
|\vec X^{m}_\rho| + |\vec\eta_\rho|\right)
+ \mu \left( |\vec\eta_\rho|^2, |\vec X^m_\rho|^{-1} \right) 
\nonumber \\ & \quad
- \left( \vec X^m\,.\,\vec\ek_1, |\vec\eta_\rho| \right) 
+ \sum_{p \in \partial_2 I} [\sliprho^{(p)}]_+\,
(\vec\eta(p)\,.\,\vec\ek_1)^2
-\sum_{p \in \partial_1 I\cup \partial_2 I} |\sliprho^{(p)}|\,
\vec X^{m}(p)\,.\,\vec\ek_1\, |\vec\eta(p)|\,.
\label{eq:HGbound1}
\end{align}
In relation to the second term on the right hand side of (\ref{eq:HGbound1}) 
we observe, on recalling (\ref{eq:normequiv}), that
\begin{align} \label{eq:HGbound2}
\left(|\vec\eta\,.\,\vec\ek_1|,|\vec\eta_\rho|\right) \leq
\left(|\vec\eta|,|\vec\eta_\rho|\right)
& \leq \frac1{4\,\mu} \left( |\vec\eta|^2, |\vec X^m_\rho| \right)
+ \mu \left( |\vec\eta_\rho|^2, |\vec X^m_\rho|^{-1} \right) 
\nonumber \\ & 
\leq \frac1{4\,\mu} \left( |\vec\eta|^2, |\vec X^m_\rho| \right)^{(h)}
+ \mu \left( |\vec\eta_\rho|^2, |\vec X^m_\rho|^{-1} \right) .
\end{align}
Moreover, it holds that
\begin{equation} \label{eq:HGbound3}
\left(|\vec\eta\,.\,\vec\ek_1|,|\vec X^m_\rho|\right) \leq
\left(|\vec\eta|,|\vec X^m_\rho|\right)
\leq \tfrac18\,\mu\,(\ttau_m)^{-1} 
\left( |\vec\eta|^2, |\vec X^m_\rho| \right)^{(h)}
+ C(\mu, \ttau_m) \left( 1, |\vec X^m_\rho| \right) ,
\end{equation}
recall (\ref{eq:normequiv}), and similarly
\begin{equation} \label{eq:HGbound4}
\left( \vec X^m\,.\,\vec\ek_1, |\vec\eta_\rho| \right) \leq
\left( |\vec X^m|, |\vec\eta_\rho| \right) 
\leq \tfrac18\,\mu\,\left( |\vec\eta_\rho|^2, |\vec X^m_\rho|^{-1}\right)
+ C(\mu) \left( |\vec X^m|^2, |\vec X^m_\rho| \right)^{(h)}. 
\end{equation}
Combining (\ref{eq:HGbound1}), (\ref{eq:HGbound2}), (\ref{eq:HGbound3}) 
and (\ref{eq:HGbound4}) yields, on recalling (\ref{eq:rhobound}), that
\begin{align}
\left( \mathcal{F}_h^{(h)}(\vec\eta),\vec\eta\right)^h
& \geq \left(\tfrac78\,\mu\,(\ttau_m)^{-1} 
- \tfrac14\,\mu^{-1} \right)
\left( |\vec\eta|^2, |\vec X^m_\rho|\right)^{(h)}
-\sum_{p \in \partial_1 I \cup \partial_2 I} |\vec X^{m}(p)|\, |\vec\eta(p)|
\nonumber \\ & \quad
-C(\mu,\ttau_m,\vec X^m)
\nonumber \\ & 
\geq \tfrac14 \left(3\,\mu\,(\ttau_m)^{-1} - \mu^{-1} \right)
\left( |\vec\eta|^2, |\vec X^m_\rho|\right)^{(h)}
-C(\mu,\ttau_m,\vec X^m)\,,
\label{eq:HGbound5}
\end{align}
where in the last inequality we have used a Young's inequality and observed
from (\ref{eq:ip0}) and (\ref{eq:normequiv}) that $|\vec\eta(p)|^2
\leq 2\,(h\,|\vec X^m_\rho(p)|)^{-1} 
\left( |\vec\eta|^2, |\vec X^m_\rho|\right)^h
\leq C(\vec X^m)\,\left( |\vec\eta|^2, |\vec X^m_\rho|\right)^{(h)}$
for $p\in \partial_i I$, $i=1,2$.

Now choosing $\ttau_m < 3\,\mu^2$ in (\ref{eq:HGbound5}) we obtain
that 
\begin{equation*} 
\left( \mathcal{F}_h^{(h)}(\vec\eta),\vec\eta\right)^h
\geq 0 \quad \forall\ \vec\eta \in B^h_\gamma
= \{ \vec\zeta \in \Vhpartial:(\vec\zeta,\vec\zeta)^h = \gamma^2\}
\end{equation*}
holds for $\gamma$ sufficiently large, and so
the existence of a solution $\delta\vec X^{m+1} \in \Vhpartial$ 
to (\ref{eq:Dnew}) with
$(\delta\vec X^{m+1},\delta\vec X^{m+1})^h \leq \gamma^2$ follows from 
\cite[Prop.~2.8]{Zeidler86}. 
The existence of $\vec\kappa_{\mathcal{S}}^{m+1} \in \vecWhpartialzero$
then follows immediately from (\ref{eq:dziukfdnonlineara}). 
\end{proof}

We remark that although one can eliminate $\kappa_{\mathcal{S}}^{m+1}$ from the scheme $(\BGNmc_{m,\star})^{h}$, recall (\ref{eq:fdnonlinear}),
one cannot adapt the above proof for
the scheme $(\GDmc_{m,\star})^{h}$, as one would obtain
(\ref{eq:HGbound1}) with $( |\vec\eta|^2, |\vec X^m_\rho|)^{h}$ replaced by
$( |\vec\eta\,.\,\vec\omega^m|^2, |\vec X^m_\rho|)^{h}$, and so it is no longer
possible to bound e.g.\ the second term on the right hand side of 
(\ref{eq:HGbound1}).

\begin{thm} \label{thm:stab}
Let $\vec X^m \in \Vhpartialzero$ satisfy the assumption $(\mathfrak A)$.
Let $(\vec X^{m+1},\kappa_{\mathcal{S}}^{m+1})$ be a solution to 
$(\BGNmc_{m,\star})^{(h)}$, 
or let 
$(\vec X^{m+1},\vec\kappa_{\mathcal{S}}^{m+1})$ be a solution to 
$(\GDmc_{m,\star})^{(h)}$. 
Then it holds that
\begin{equation} \label{eq:stab}
E(\vec X^{m+1}) + 2\,\pi\,\ttau_m
\begin{cases}
\left(\vec X^m\,.\,\vec\ek_1\,|\kappa_{\mathcal{S}}^{m+1}|^2,
|\vec X^m_\rho|\right)^{(h)} \\
\left(\vec X^m\,.\,\vec\ek_1\,|\vec\kappa_{\mathcal{S}}^{m+1}|^2,
|\vec X^m_\rho|\right)^{(h)}
\end{cases}
 \leq E(\vec X^m)\,,
\end{equation}
respectively, where we recall the definition {\rm (\ref{eq:Eh})}.
\end{thm}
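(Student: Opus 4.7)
The plan is to mimic at the fully discrete level the semidiscrete calculation (\ref{eq:dEhepsdt}), testing the curvature equation with $\chi=\kappa_{\mathcal{S}}^{m+1}$ (respectively $\vec\chi=\vec\kappa_{\mathcal{S}}^{m+1}$) in (\ref{eq:fdnonlineara}) (respectively (\ref{eq:dziukfdnonlineara})), and the position equation with $\vec\eta=\vec X^{m+1}-\vec X^m=\delta\vec X^{m+1}\in\Vhpartial$ in (\ref{eq:fdnonlinearb}) (respectively (\ref{eq:dziukfdnonlinearb})). The curvature terms then match up, with the $\tau_m$ factor emerging after multiplying (\ref{eq:fdnonlineara}) by $\tau_m$, and produce precisely the dissipation $\tau_m\bigl(\vec X^m\cdot\vec\ek_1\,|\kappa_{\mathcal{S}}^{m+1}|^2,|\vec X^m_\rho|\bigr)^{(h)}$ or its vector analogue on the left of (\ref{eq:stab}).

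The crux is to absorb the remaining boundary and bulk terms into the energy difference $E(\vec X^{m+1})-E(\vec X^m)$ by means of suitable convex inequalities. The main bulk inequality I would use is the Cauchy--Schwarz based estimate
\begin{equation*}
\vec X^{m+1}_\rho\cdot(\vec X^{m+1}_\rho-\vec X^m_\rho)\,|\vec X^m_\rho|^{-1}
\;\geq\; |\vec X^{m+1}_\rho|-|\vec X^m_\rho|,
\end{equation*}
which one verifies by distinguishing the two cases $|\vec X^{m+1}_\rho|\geq|\vec X^m_\rho|$ and $|\vec X^{m+1}_\rho|<|\vec X^m_\rho|$. Multiplying by $\vec X^m\cdot\vec\ek_1\geq 0$ and integrating, then adding the term $\bigl((\vec X^{m+1}-\vec X^m)\cdot\vec\ek_1,|\vec X^{m+1}_\rho|\bigr)$ coming from the other curvature contribution in (\ref{eq:fdnonlinearb}) (note that here it is essential that $|\vec X^{m+1}_\rho|$ appears rather than $|\vec X^m_\rho|$, which is exactly what distinguishes the starred nonlinear scheme from the linear one), yields the telescoping bound $\bigl(\vec X^{m+1}\cdot\vec\ek_1,|\vec X^{m+1}_\rho|\bigr)-\bigl(\vec X^m\cdot\vec\ek_1,|\vec X^m_\rho|\bigr)$, that is, $\tfrac{1}{2\pi}(A(\vec X^{m+1})-A(\vec X^m))$.

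The boundary terms require a parallel convex-splitting argument and will be the main obstacle. For $p\in\partial_1 I$, the admissibility condition $\delta\vec X^{m+1}(p)\cdot\vec\ek_1=0$ forces $\vec X^{m+1}(p)\cdot\vec\ek_1=\vec X^m(p)\cdot\vec\ek_1$, so the right hand side contribution in (\ref{eq:fdnonlinearb}) equals exactly the change in the corresponding contact energy term of (\ref{eq:Eh}). For $p\in\partial_2 I$, setting $a=\vec X^m(p)\cdot\vec\ek_1$ and $b=\vec X^{m+1}(p)\cdot\vec\ek_1$, one has to show the one-sided inequality
\begin{equation*}
\bigl([\sliprho^{(p)}]_+\,b+[\sliprho^{(p)}]_-\,a\bigr)(b-a)\;\geq\;\tfrac12\,\sliprho^{(p)}(b^2-a^2),
\end{equation*}
which reduces to $\tfrac12|\sliprho^{(p)}|(b-a)^2\geq 0$ in both sign cases; this is precisely why the explicit/implicit splitting with $[\cdot]_\pm$ was chosen in (\ref{eq:fdnonlinearb}) and (\ref{eq:dziukfdnonlinearb}). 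Assembling the bulk area estimate with these boundary estimates, and moving the energy difference to the left, yields (\ref{eq:stab}). The proof for $(\GDmc_{m,\star})^{(h)}$ is identical, since the test function structure and boundary data are exactly the same, only with $\kappa_{\mathcal{S}}^{m+1}\,\vec\nu^m$ replaced by $\vec\kappa_{\mathcal{S}}^{m+1}$ throughout.
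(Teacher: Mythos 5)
Your proposal is correct and follows essentially the same route as the paper's proof: the same test functions $\chi=\ttau_m\,\kappa_{\mathcal{S}}^{m+1}$ and $\vec\eta=\vec X^{m+1}-\vec X^m$, the same elementary inequality $\vec a\,.\,(\vec a-\vec b)\geq|\vec b|\,(|\vec a|-|\vec b|)$ for the bulk area term, the observation that $\vec X^{m+1}(p)\,.\,\vec\ek_1=\vec X^m(p)\,.\,\vec\ek_1$ on $\partial_1 I$, and the same convex one-sided estimate for the $[\,\cdot\,]_\pm$-split $\partial_2 I$ contact terms, all assembled into the telescoping energy difference.
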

\begin{proof}
Choosing $\chi = \ttau_m\,\kappa_{\mathcal{S}}^{m+1}$ in 
(\ref{eq:fdnonlineara}) and
$\vec\eta = \vec X^{m+1} - \vec X^m \in \Vhpartial$ in (\ref{eq:fdnonlinearb}) 
yields, on noting that $\vec X^{m}(p)\,.\,\vec\ek_1 = \vec
X^{m+1}(p)\,.\,\vec\ek_1$ for $p \in \partial_1 I$, that
\begin{align}
& - \ttau_m \left(\vec X^m\,.\,\vec\ek_1\,|\kappa_{\mathcal{S}}^{m+1}|^2,
|\vec X^m_\rho|\right)^{(h)} \nonumber \\ & \qquad
= \left( \vec X^{m+1} - \vec X^m  ,\vec\ek_1\, |\vec X^{m+1}_\rho|\right)
+ \left( (\vec X^m\,.\,\vec\ek_1)\,
(\vec X^{m+1} - \vec X^m)_\rho,\vec X^{m+1}_\rho\, |\vec X^m_\rho|^{-1} \right)
\nonumber \\ & \qquad \qquad
+ \sum_{p \in \partial_1 I} \sliprho^{(p)}\,
(\vec X^{m}(p)\,.\,\vec\ek_1)\,(\vec X^{m+1}(p) - \vec X^m(p))\,.\,\vec\ek_2
\nonumber \\ & \qquad \qquad
+ \sum_{p \in \partial_2 I} ([\sliprho^{(p)}]_+\,\vec X^{m+1}(p) + 
[\sliprho^{(p)}]_-\,\vec X^{m}(p)]\,.\,\vec\ek_1)\,
(\vec X^{m+1}(p) - \vec X^m(p))\,.\,\vec\ek_1
\nonumber \\ & \qquad
\geq \left( \vec X^{m+1} - \vec X^m  ,\vec\ek_1\, |\vec X^{m+1}_\rho|\right)
+ \left( \vec X^m\,.\,\vec\ek_1,
|\vec X^{m+1}_\rho| - |\vec X^m_\rho| \right)
\nonumber \\ & \qquad \qquad
+ \sum_{p \in \partial_1 I} \sliprho^{(p)}\,
(\vec X^{m}(p)\,.\,\vec\ek_1)\,\vec X^{m+1}(p) \,.\,\vec\ek_2
- \sum_{p \in \partial_1 I} \sliprho^{(p)}\,
(\vec X^{m}(p)\,.\,\vec\ek_1)\,\vec X^{m}(p) \,.\,\vec\ek_2
\nonumber \\ & \qquad \qquad
+ \tfrac12 \sum_{p \in \partial_2 I} [\sliprho^{(p)}]_+\,
(\vec X^{m+1}(p)\,.\,\vec\ek_1)^2\,
- \tfrac12 \sum_{p \in \partial_2 I} [\sliprho^{(p)}]_+\,
(\vec X^{m}(p)\,.\,\vec\ek_1)^2\,
\nonumber \\ & \qquad \qquad
+ \tfrac12 \sum_{p \in \partial_2 I} [\sliprho^{(p)}]_-\,
(\vec X^{m+1}(p)\,.\,\vec\ek_1)^2\,
- \tfrac12 \sum_{p \in \partial_2 I} [\sliprho^{(p)}]_-\,
(\vec X^{m}(p)\,.\,\vec\ek_1)^2\,
\nonumber \\ & \qquad
= \left( \vec X^{m+1} \,.\,\vec\ek_1, |\vec X^{m+1}_\rho|\right)
- \left( \vec X^m\,.\,\vec\ek_1, |\vec X^m_\rho| \right)
\nonumber \\ & \qquad \qquad
+ \sum_{p \in \partial_1 I} \sliprho^{(p)}\,
(\vec X^{m+1}(p)\,.\,\vec\ek_1)\,\vec X^{m+1}(p) \,.\,\vec\ek_2
- \sum_{p \in \partial_1 I} \sliprho^{(p)}\,
(\vec X^{m}(p)\,.\,\vec\ek_1)\,\vec X^{m}(p) \,.\,\vec\ek_2
\nonumber \\ & \qquad \qquad
+ \tfrac12 \sum_{p \in \partial_2 I} \sliprho^{(p)}\,
(\vec X^{m+1}(p)\,.\,\vec\ek_1)^2\,
- \tfrac12 \sum_{p \in \partial_2 I} \sliprho^{(p)}\,
(\vec X^{m}(p)\,.\,\vec\ek_1)^2\,
\nonumber \\ & \qquad
= \frac1{2\,\pi}\,E(\vec X^{m+1}) 
- \frac1{2\,\pi}\,E(\vec X^m)\,,
\label{eq:stab1}
\end{align}
where we have used the two inequalities
$\vec a\,.\,(\vec a - \vec b) \geq |\vec b|\,(|\vec a| - |\vec b|)$
for $\vec a$, $\vec b \in \bR^2$,
and $2\,\gamma\,(\gamma - \alpha) \geq \gamma^2 - \alpha^2$ for
$\alpha,\gamma\in\bR$. This proves the desired result 
(\ref{eq:stab}) for (\ref{eq:fdnonlineara},b). The proof for
(\ref{eq:dziukfdnonlineara},b) is analogous.
\end{proof}

We note that the scheme {\rm (\ref{eq:fdnonlinearnu})}
can also be shown to be unconditionally stable, i.e.\ a solution to
{\rm (\ref{eq:fdnonlinearnu})} satisfies
\begin{equation*} 
E(\vec X^{m+1}) + 
2\,\pi\,\ttau_m
 \left(\vec X^m \,.\,\vec\ek_1
\left|\dfrac{\vec X^{m+1} - \vec X^m}{\ttau_m}\,.\,\vec\nu^m\right|^2 ,
|\vec X^m_\rho| \right)^h \leq E(\vec X^m)\,.
\end{equation*}

\subsection{Nonlinear mean curvature flow}
It is a simple matter to extend the presented fully discrete approximations to
the nonlinear mean curvature flow (\ref{eq:nlmcf}) and 
the volume preserving variant (\ref{eq:nlmcf2}). 
We recall the fully 3d parametric finite
element schemes (2.4a,b) and (2.5) in \cite{gflows3d} for the approximation
of (\ref{eq:nlmcf}) and (\ref{eq:nlmcf2}), respectively.
We can now define their natural axisymmetric analogues. 
For example, the natural adaptation $(\BGNmckappa^f_{m})^{h}$ of
the scheme $(\BGNmckappa_{m})^{h}$ to (\ref{eq:nlmcf}) is given by 
(\ref{eq:fda},b) with (\ref{eq:fda}) replaced by
\begin{equation} \label{eq:nlfda}
\left(\frac{\vec X^{m+1} - \vec X^m}{\ttau_m}, \chi\,\vec\nu^m\,|\vec
X^m_\rho|\right)^h
= \left( f(\kappa^{m+1} - \doctorkappa^{m}(\kappa^{m+1})),
\chi\,|\vec X^m_\rho|\right)^h 
\qquad \forall\ \chi \in V^h\,.
\end{equation}
Similarly, the natural adaptation $(\BGNmc^f_{m,\star})^{(h)}$ of
the scheme $(\BGNmc_{m,\star})^{(h)}$ to (\ref{eq:nlmcf}) is given by 
(\ref{eq:fdnonlineara},b) with (\ref{eq:fdnonlineara}) replaced by
\begin{align} \label{eq:nlfdnonlineara}
&  \left(\vec X^m\,.\,\vec\ek_1\,
\frac{\vec X^{m+1} - \vec X^m}{\ttau_m}, \chi\,\vec\nu^m\,
|\vec X^m_\rho|\right)^{(h)}
= \left(\vec X^m\,.\,\vec\ek_1\,\pi^h[f(\kappa_{\mathcal{S}}^{m+1})],
\chi\,|\vec X^m_\rho|\right)^{(h)} 
\nonumber \\ & \hspace{11cm}
\qquad \forall\ \chi \in \Whpartialzero\,.
\end{align}
Similarly to (\ref{eq:stab}), with the same choices of $\chi$ and $\vec\eta$,
it is then possible to prove that solutions to
$(\BGNmc^f_{m,\star})^{h}$
satisfy $E(\vec X^{m+1}) + 2\,\pi\,\ttau_m\,
\left( \vec X^m\,.\,\vec\ek_1\, f( \kappa_{\mathcal{S}}^{m+1}),
\kappa_{\mathcal{S}}^{m+1} \,|\vec X^m_\rho| \right)^{h} \leq E(\vec X^m)$, 
which 
provides a stability bound if $f$ is monotonically increasing with $f(0)=0$.

Finally, replacing (\ref{eq:nlfda}) with
\begin{align} \label{eq:nlVfda}
& \left(\frac{\vec X^{m+1} - \vec X^m}{\ttau_m}, \chi\,\vec\nu^m\,|\vec
X^m_\rho|\right)^h
= \left( f(\kappa^{m+1} - \doctorkappa^{m}(\kappa^{m+1})),
\chi\,|\vec X^m_\rho|\right)^h 
\nonumber \\ & \hspace{2cm}
- \frac{\left(\vec X^m\,.\,\vec\ek_1, f(\kappa^m - \doctorkappa^{m}(\kappa^m))\,
|\vec X^m_\rho|\right)^{h}}
{\left(\vec X^m\,.\,\vec\ek_1, |\vec X^m_\rho|\right)}
\left(\chi,|\vec X^m_\rho|\right)^{h} 
\quad \forall\ \chi \in V^h
\end{align}
and replacing (\ref{eq:nlfdnonlineara}) with
\begin{align} \label{eq:nlVfdnonlineara}
& \left(\vec X^m\,.\,\vec\ek_1\,
\frac{\vec X^{m+1} - \vec X^m}{\ttau_m}, \chi\,\vec\nu^m\,
|\vec X^m_\rho|\right)^{(h)}
= \left(\vec X^m\,.\,\vec\ek_1\,\pi^h[f(\kappa_{\mathcal{S}}^{m+1})],
\chi\,|\vec X^m_\rho|\right)^{(h)} 
\nonumber \\ & \hspace{2cm}
- \frac{\left(\vec X^m\,.\,\vec\ek_1, \pi^h[f(\kappa_{\mathcal{S}}^{m+1})]\,
|\vec X^m_\rho|\right)^{(h)}}
{\left(\vec X^m\,.\,\vec\ek_1, |\vec X^m_\rho|\right)}
\left(\vec X^m\,.\,\vec\ek_1, \chi\,|\vec X^m_\rho|\right)^{(h)} 
\quad \forall\ \chi \in \Whpartialzero
\end{align}
gives the fully discrete approximations 
$(\BGNmckappa^{f,V}_{m})^{h}$ and $(\BGNmc^{f,V}_{m,\star})^{(h)}$, 
respectively, of (\ref{eq:nlmcf2}).
For the case (\ref{eq:fmcf}) it is possible to prove a stability bound for
$(\BGNmc^{f,V}_{m,\star})^{(h)}$. In particular,
solutions to $(\BGNmc^{f,V}_{m,\star})^{(h)}$ satisfy, 
similarly to (\ref{eq:stab1}), that
\begin{align*}
& 
\frac1{2\,\pi}\,E(\vec X^{m}) - \frac1{2\,\pi}\,E(\vec X^{m+1})
\nonumber \\ & \
\geq \ttau_m \left(\vec X^m\,.\,\vec\ek_1\,|\kappa_{\mathcal{S}}^{m+1}|^2,
|\vec X^m_\rho|\right)^{(h)}
- \left[ \left( \vec X^m\,.\,\vec\ek_1,|\vec X^m_\rho| 
\right)\right]^{-1}
\left|\left( \vec X^m\,.\,\vec\ek_1,\kappa_{\mathcal{S}}^{m+1}\,
|\vec X^m_\rho| \right)^{(h)}\right|^2 \geq 0\,,
\end{align*}
where we have noted the Cauchy--Schwarz inequality.
For the fully discrete approximations $(\BGNmckappa^{f,V}_{m})^{h}$ 
and $(\BGNmc^{f,V}_{m,\star})^{(h)}$ it is not possible to prove a volume
conservation property. However, in practice all three schemes preserve the
enclosed volume well, with the relative volume loss decreasing as the
discretization parameters become smaller.

Finally, we note that for the schemes $(\BGNmckappa_m^f)^h$ and 
$(\BGNmckappa_m^{f,V})^h$, depending on the choice of $f$, 
existence and uniqueness results can be shown, see Appendix~\ref{sec:B}.

\begin{rem} \label{rem:Fscheme}
In order to be able to compute evolutions for the general flow
{\rm (\ref{eq:Fmg})}, we propose the scheme 
$(\BGNmckappa^F_{m})^{h}$, which can be obtained from
the scheme $(\BGNmckappa_{m})^{h}$, {\rm (\ref{eq:fda},b)}, 
by replacing {\rm (\ref{eq:fda})} with
\begin{equation} \label{eq:nlFda}
\left(\frac{\vec X^{m+1} - \vec X^m}{\ttau_m}, \chi\,\vec\nu^m\,|\vec
X^m_\rho|\right)^h
= \left( F(\kappa^{m} - \doctorkappa^{m}(\kappa^{m}), 
-\kappa^{m}\,\doctorkappa^{m}(\kappa^m)), \chi\,|\vec X^m_\rho|\right)^h 
\quad \forall\ \chi \in V^h\,.
\end{equation}
This is linear scheme for which existence of a unique solution, provided that
the assumption $(\mathfrak B)^h$ holds, can easily be shown. Moreover,
solutions to the semidiscrete variant of $(\BGNmckappa^F_{m})^{h}$
satisfy the equidistribution property {\rm (\ref{eq:equid})}. 
\end{rem}

\setcounter{equation}{0}
\section{Numerical results} \label{sec:nr}

As the fully discrete energy, we consider $E(\vec X^m)$, recall (\ref{eq:Eh}). 
We always employ uniform time steps, $\ttau_m = \ttau$, $m=0,\ldots,M-1$.

We also consider the ratio
\begin{equation} \label{eq:ratio}
\ratio^m = \dfrac{\max_{j=1\to J} |\vec{X}^m(q_j) - \vec{X}^m(q_{j-1})|}
{\min_{j=1\to J} |\vec{X}^m(q_j) - \vec{X}^m(q_{j-1})|}
\end{equation}
between the longest and shortest element of $\Gamma^m$, and are often
interested in the evolution of this ratio over time.

On recalling (\ref{eq:varkappa}), and given $\Gamma^0 = \vec X^0(\overline I)$,
for the scheme $(\BGNmckappa_m^{f,V})^h$ we define 
$\kappa^0 \in V^h$ via
\begin{equation*} 
\kappa^0 = \pi^h\left[\frac{\vec\kappa^0\,.\,\vec\omega^0}{|\vec\omega^0|}
\right],
\end{equation*}
recall (\ref{eq:omegam}), where $\vec\kappa^0\in \Vh$ is such that
\begin{equation*} 
\left( \vec\kappa^{0},\vec\eta\, |\vec X^0_\rho| \right)^h
+ \left( \vec{X}^{0}_\rho , \vec\eta_\rho\,|\vec X^0_\rho|^{-1} \right)
 = 0 \quad \forall\ \vec\eta \in \Vh\,.
\end{equation*}

\subsection{Numerical results for mean curvature flow} \label{sec:mcnr}
\subsubsection{Sphere}

It is easy to show that a sphere of radius $r(t)$, with
\begin{equation} \label{eq:truer}
r(t) = [r^2(0) - 4\,t]^\frac12\,,
\end{equation}
is a solution to (\ref{eq:mcfS}). 
We use this true solution for a convergence test for the various schemes for
mean curvature flow, similarly to Table~1 in \cite{gflows3d}. Here
we start with a nonuniform partitioning of a semicircle of radius 
$r(0)=r_0=1$ and compute the flow until time $T = 0.125$. 
In particular, we have $\partial_0 I = \partial I = \{0,1\}$ and we choose
$\vec X^0 \in \Vhpartialzero$ with
\begin{equation} \label{eq:X0}
\vec X^0(q_j) = r_0 \begin{pmatrix} 
\cos[(q_j-\tfrac12)\,\pi + 0.1\,\cos((q_j-\tfrac12)\,\pi)] \\
\sin[(q_j-\tfrac12)\,\pi + 0.1\,\cos((q_j-\tfrac12)\,\pi)]
\end{pmatrix}, \quad j = 0,\ldots,J\,,
\end{equation}
recall (\ref{eq:Jequi}). 
We compute the error
\begin{equation} \label{eq:errorXx}
\errorXx = \max_{m=1,\ldots,M} \max_{j=0,\ldots,J} | |\vec X^m(q_j)| - r(t_m)|
\end{equation}
over the time interval $[0,T]$ between
the true solution (\ref{eq:truer}) and the discrete solutions for the schemes
$(\BGNmckappa_m)^h$, $(\GDmckappa_m)^h$, $(\BGNmc_m)^{(h)}$, 
$(\GDmc_m)^{(h)}$, $(\BGNmc_{m,\star})^{(h)}$ and 
$(\GDmc_{m,\star})^{(h)}$.
Here we used the time step size $\ttau=0.1\,h^2_{\Gamma^0}$,
where $h_{\Gamma^0}$ is the maximal edge length of $\Gamma^0$.
The computed errors are reported in 
Tables~\ref{tab:mcf1}--\ref{tab:mcf_schemeD}.
Comparing the reported numbers with the values in 
Table~1 in \cite{gflows3d}, we see that the errors for the axisymmetric schemes
are significantly smaller than the values in Table~1 in \cite{gflows3d} for
similar discretization parameters.
It is clear from Table~\ref{tab:mcf1} that the schemes $(\BGNmckappa_m)^h$
and $(\GDmckappa_m)^h$ appear to converge with the optimal convergence rate 
of $\mathcal{O}(h^2_{\Gamma^0})$. Similarly, Tables~\ref{tab:mcf_schemeC} 
and \ref{tab:mcf_schemeD} suggest that the
schemes $(\BGNmc_{m,\star})^{(h)}$, $(\GDmc_m)^{(h)}$
and $(\GDmc_{m,\star})^{(h)}$
converge with an order slightly less than quadratic. 
We note that the
linear schemes $(\BGNmc_m)^{(h)}$ lead to 
solutions $\vec X^m$ with 
$\min_{\rho \in \overline I} \vec X^m(\rho)\,.\,\vec\ek_1 < 0$,
and so we cannot complete the evolutions. In particular, in practice the two 
boundary elements shrink in size due to the scheme's tangential motion. 
Once the element has shrunk to a length almost zero, the freely moving vertex 
can become negative. The linear schemes $(\GDmc_m)^{(h)}$ behave well, on the
other hand. But as they are very close to the nonlinear schemes
$(\GDmc_{m,\star})^{(h)}$, from now on we concentrate on the
schemes $(\GDmc_{m,\star})^{(h)}$, $(\BGNmc_{m,\star})^{(h)}$, 
$(\BGNmckappa_{m})^{h}$ and $(\GDmckappa_{m})^{h}$.
\begin{table}
\center
\begin{tabular}{|rr|c|c|c|c|}
\hline
& & \multicolumn{2}{c|}{$(\BGNmckappa_m)^h$}&
\multicolumn{2}{c|}{$(\GDmckappa_m)^h$} \\
$J$  & $h_{\Gamma^0}$ & $\errorXx$ & EOC & $\errorXx$ & EOC \\ \hline
32   & 1.0792e-01 & 7.3110e-04 & ---      & 1.2074e-03 & --- \\
64   & 5.3988e-02 & 1.8422e-04 & 1.990129 & 3.0227e-04 & 1.999490 \\         
128  & 2.6997e-02 & 4.6098e-05 & 1.998974 & 7.5534e-05 & 2.000961 \\
256  & 1.3499e-02 & 1.1525e-05 & 2.000044 & 1.8878e-05 & 2.000527 \\
512  & 6.7495e-03 & 2.8813e-06 & 1.999975 & 4.7192e-06 & 2.000092 \\
\hline
\end{tabular}
\caption{Errors for the convergence test for (\ref{eq:truer})
with $r_0 = 1$ over the time interval $[0,0.125]$.}
\label{tab:mcf1}
\end{table}%
\begin{table}
\center
\begin{tabular}{|r|c|c|c|c|c|c|}
\hline
 & $(\BGNmc_m)^h$ & $(\BGNmc_m)$ & 
\multicolumn{2}{c|}{$(\BGNmc_{m,\star})^h$} &
\multicolumn{2}{c|}{$(\BGNmc_{m,\star})$} \\ 
$J$ & $\errorXx$ & $\errorXx$ & $\errorXx$ & EOC & $\errorXx$ & EOC \\ \hline
32  & --- & --- & 6.5076e-03 & ---      & 3.7596e-03 & --- \\
64  & --- & --- & 1.9553e-03 & 1.736035 & 1.1565e-03 & 1.702088 \\
128 & --- & --- & 5.8247e-04 & 1.747414 & 3.5226e-04 & 1.715328 \\
256 & --- & --- & 1.7056e-04 & 1.771999 & 1.0672e-04 & 1.722902 \\
512 & --- & --- & 4.9112e-05 & 1.796132 & 3.2277e-05 & 1.725252 \\
\hline
\end{tabular}
\caption{Errors for the convergence test for (\ref{eq:truer})
with $r_0 = 1$ over the time interval $[0,0.125]$.}
\label{tab:mcf_schemeC}
\end{table}%
\begin{table}
\center
\begin{tabular}{|r|c|c|c|c|c|c|}
\hline
 & $(\GDmc_m)^h$ & $(\GDmc_m)$ & 
\multicolumn{2}{c|}{$(\GDmc_{m,\star})^h$} &
\multicolumn{2}{c|}{$(\GDmc_{m,\star})$} \\ 
$J$ & $\errorXx$ & $\errorXx$ & $\errorXx$ & EOC & $\errorXx$ & EOC \\ \hline
32  & 8.1006e-03 & 3.0757e-03 & 8.0470e-03 & ---      & 3.6921e-03 & --- \\
64  & 2.4707e-03 & 8.8590e-04 & 2.4549e-03 & 1.714070 & 1.0449e-03 & 1.822441 \\
128 & 7.3144e-04 & 2.5363e-04 & 7.2755e-04 & 1.754827 & 2.9111e-04 & 1.844024 \\
256 & 2.1165e-04 & 7.2522e-05 & 2.1075e-04 & 1.787609 & 8.0222e-05 & 1.859594 \\
512 & 6.0176e-05 & 2.0472e-05 & 5.9972e-05 & 1.813172 & 2.1916e-05 & 1.872013 \\
\hline
\end{tabular}
\caption{Errors for the convergence test for (\ref{eq:truer})
with $r_0 = 1$ over the time interval $[0,0.125]$.}
\label{tab:mcf_schemeD}
\end{table}%

\subsubsection{Torus}
We repeat the two torus experiments in Figures 5 and 6 in \cite{gflows3d}.
To this end, we let $\partial I = \emptyset$.
For an initial torus with radii $R=1$, $r=0.7$, we obtain a surface that closes
up towards a genus-0 surface, as in 
\cite[Fig.\ 5]{gflows3d}. See Figure~\ref{fig:torusR1r07}
for the simulation results for the scheme $(\BGNmckappa_m)^h$, for
the discretization parameters $J=256$ and $\ttau = 10^{-4}$.
\begin{figure}
\center
\mbox{
\includegraphics[angle=-90,width=0.30\textwidth]{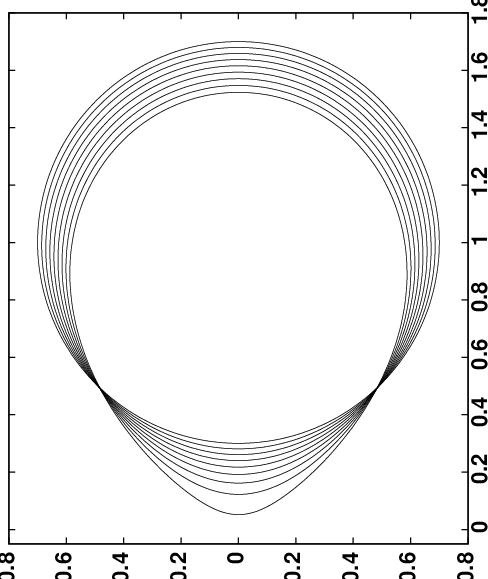}
\includegraphics[angle=-90,width=0.30\textwidth]{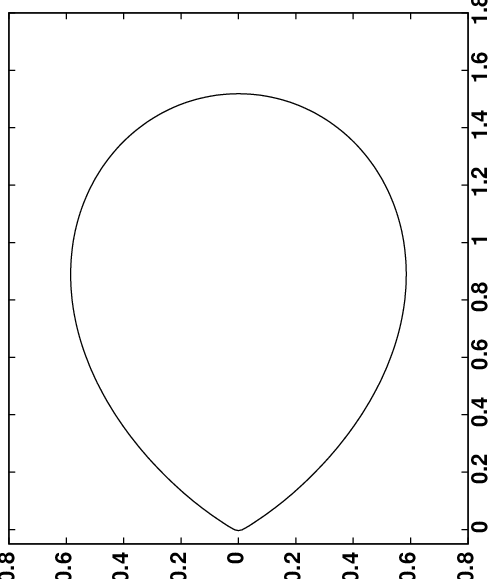}
\includegraphics[angle=-90,width=0.36\textwidth]{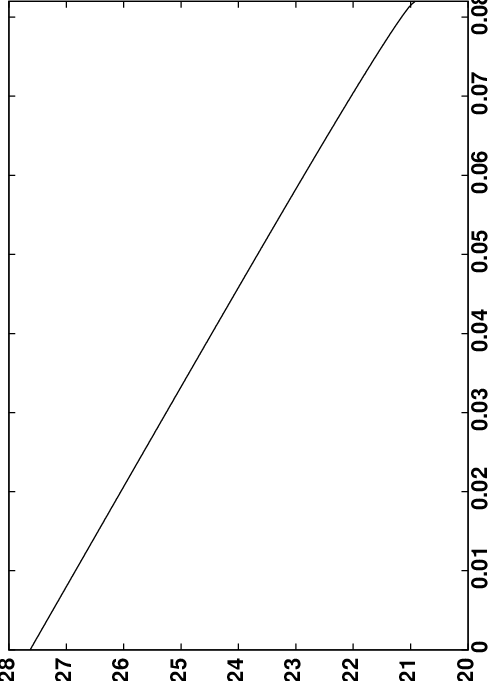}}
\includegraphics[angle=-90,width=0.20\textwidth]{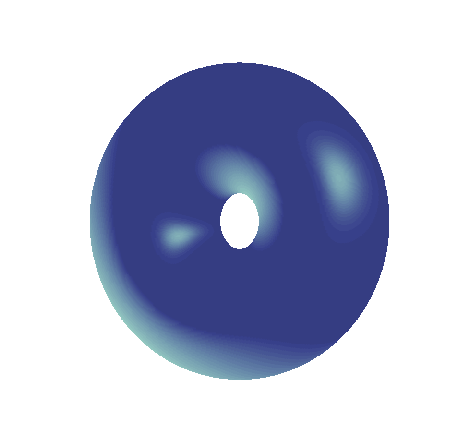}
\qquad
\includegraphics[angle=-90,width=0.20\textwidth]{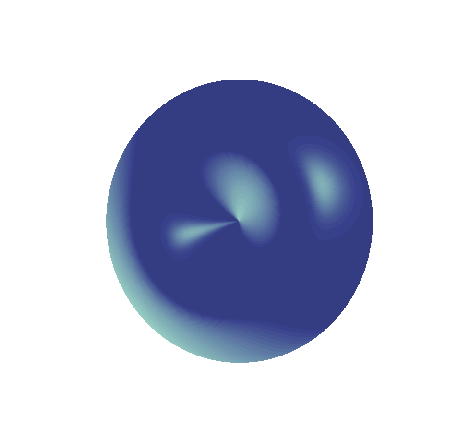}
\caption{$(\BGNmckappa_m)^h$
Evolution for a torus with radii $R=1$, $r=0.7$. Plots are at times
$t=0,0.01,\ldots,0.08$. We also show a plot at time $t=0.082$,
together with a plot of the discrete energy.
Below we visualize the axisymmetric surface $\mathcal{S}^m$ generated by
$\Gamma^m$ at times $t=0$ and $t=0.082$.}
\label{fig:torusR1r07}
\end{figure}%
On the other hand, for an initial torus with radii $R=1$, $r=0.5$, 
we obtain a shrinking evolution towards a circle, as in 
\cite[Fig.\ 6]{gflows3d}. See Figure~\ref{fig:torusR1r05}
for the evolution for the scheme $(\BGNmckappa_m)^h$, again for 
$J=256$ and $\ttau = 10^{-4}$.
\begin{figure}
\center
\includegraphics[angle=-90,width=0.30\textwidth]{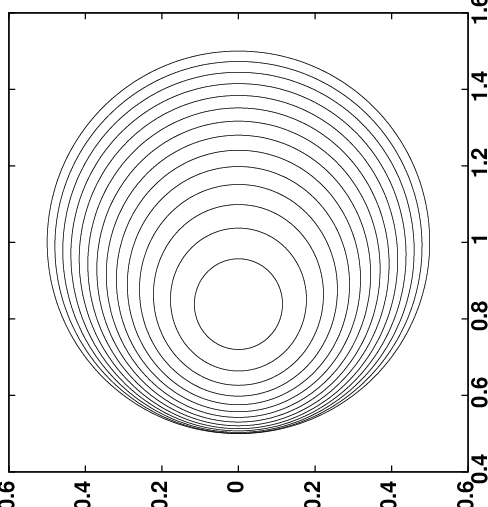}
\includegraphics[angle=-90,width=0.36\textwidth]{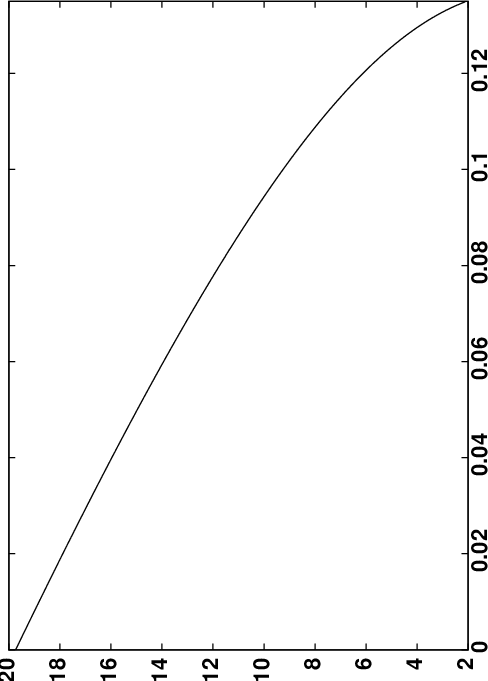}
\includegraphics[angle=-90,width=0.30\textwidth]{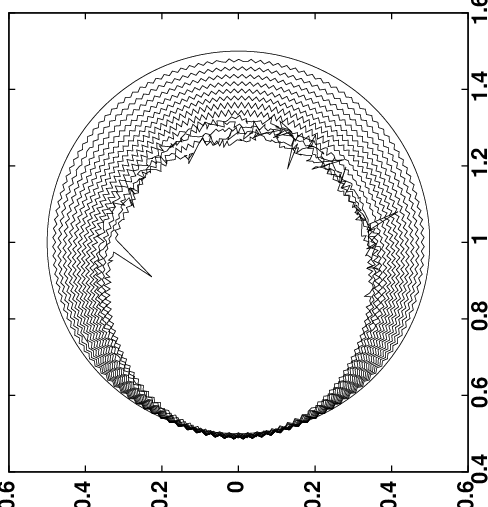}
\includegraphics[angle=-90,width=0.20\textwidth]{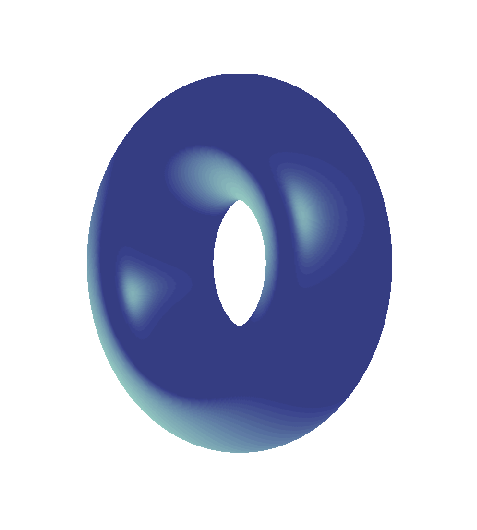}
\includegraphics[angle=-90,width=0.20\textwidth]{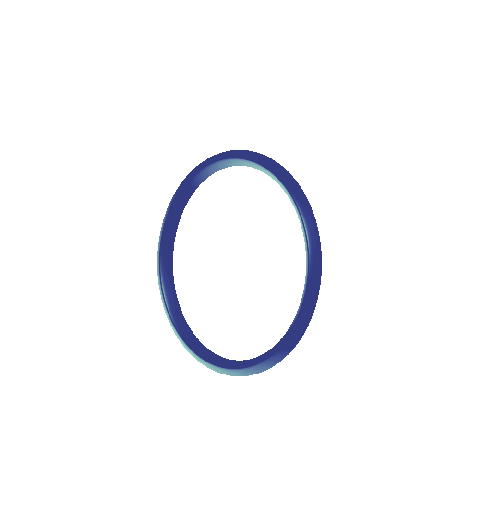}
\caption{$(\BGNmckappa_m)^h$
Evolution for a torus with radii $R=1$, $r=0.5$. Plots are at times
$t=0,0.01,\ldots,0.13$. 
We also show a plot of the discrete energy and, below,
we visualize the axisymmetric surface $\mathcal{S}^m$ generated by
$\Gamma^m$ at times $t=0$ and $t=0.135$.
On the top far right, the evolution for the scheme $(\GDmckappa_m)^h$,
with plots at times $t=0,0.01,\ldots,0.13$. 
}
\label{fig:torusR1r05}
\end{figure}%
On repeating the numerical experiment for the $(\GDmckappa_m)^h$, we observe
strong oscillations, as shown on the right of Figure~\ref{fig:torusR1r05}.
These oscillations become smaller in magnitude as $\ttau$ is decreased. 
The remaining schemes can integrate the evolution shown in 
Figure~\ref{fig:torusR1r05} in a stable way, and their numerical results
are very close to the ones displayed in Figure~\ref{fig:torusR1r05} for
the scheme $(\BGNmckappa_m)^h$. However, the schemes differ in the
exhibited tangential motions, which leads to diverse evolutions of
the ratio $\ratio^m$, see Figure~\ref{fig:torusR1r05ratios}.
The best distribution of mesh points is shown by the
schemes $(\BGNmckappa_m)^h$ and $(\BGNmc_{m,\star})^h$, followed by 
$(\BGNmc_{m,\star})$. The most nonuniform distribution of mesh points can be
observed for the two schemes $(\GDmc_{m,\star})^{(h)}$.
\begin{figure}
\center
\hspace*{-4mm}
\mbox{
\includegraphics[angle=-90,width=0.20\textwidth]{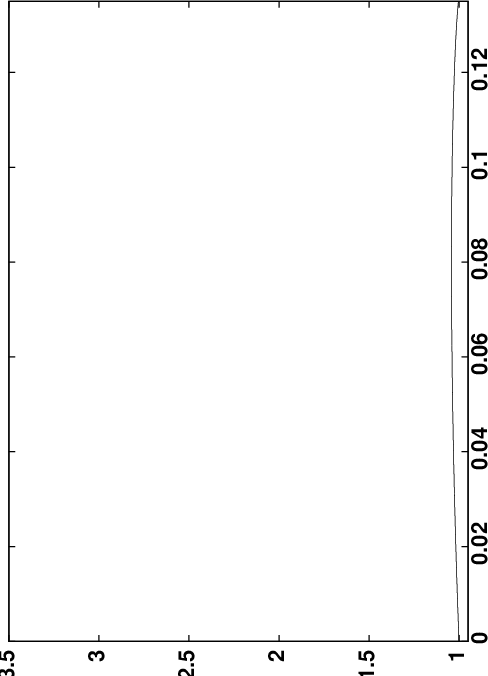}
\includegraphics[angle=-90,width=0.20\textwidth]{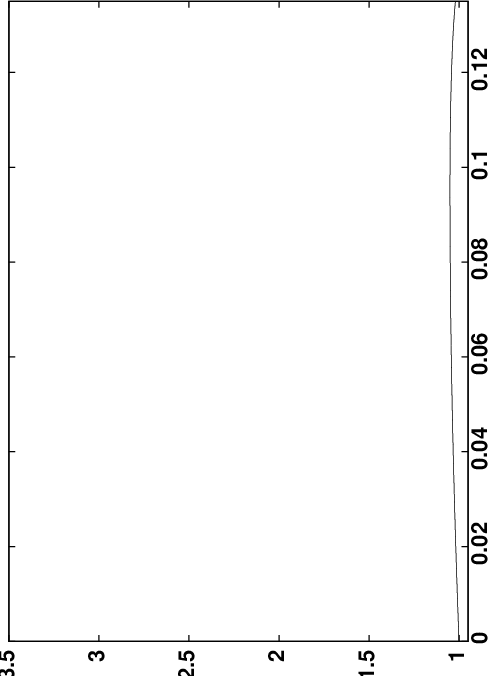}
\includegraphics[angle=-90,width=0.20\textwidth]{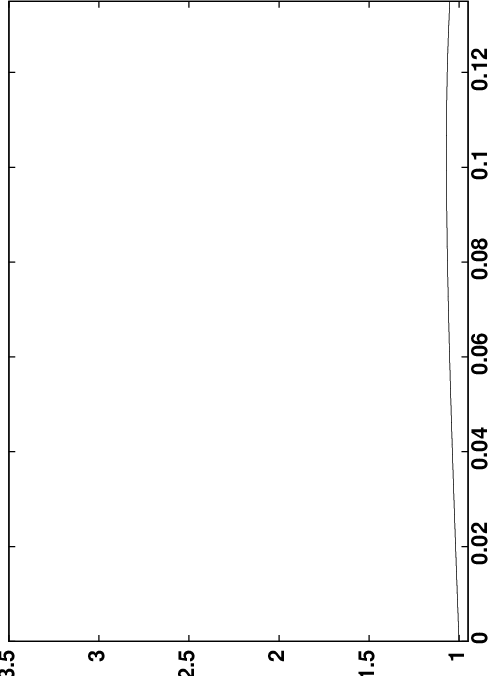}
\includegraphics[angle=-90,width=0.20\textwidth]{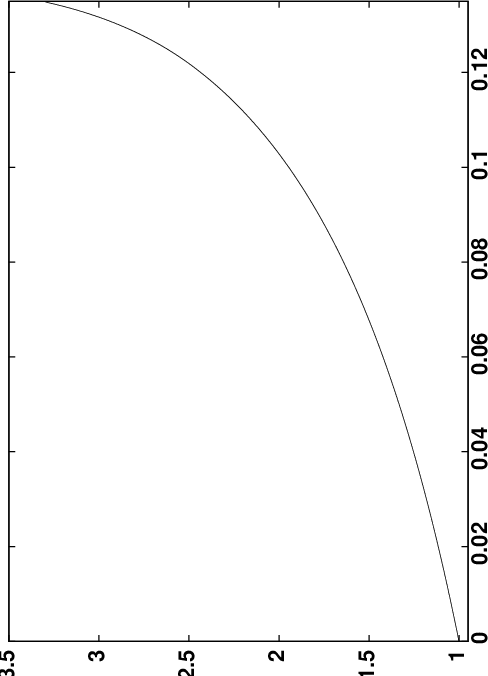}
\includegraphics[angle=-90,width=0.20\textwidth]{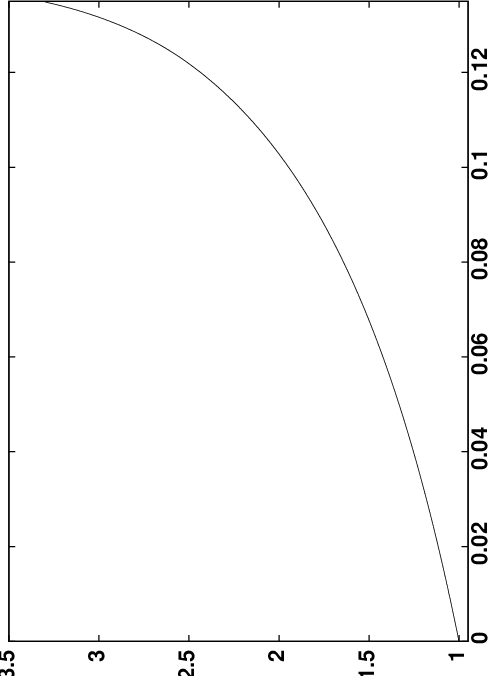}
}
\caption{
Plots of the ratio $\ratio^m$ for the schemes
$(\BGNmckappa_m)^h$, $(\BGNmc_{m,\star})^h$, 
$(\BGNmc_{m,\star})$, $(\GDmc_{m,\star})^h$, $(\GDmc_{m,\star})$.
}
\label{fig:torusR1r05ratios}
\end{figure}%

\subsubsection{Cylinder}

For the scheme $(\BGNmckappa_m)^h$
we repeat the singular evolution from \cite[Fig.\ 1]{DziukK91}. 
To this end, we set $\partial_D I = \partial I = \{0,1\}$.
In particular, starting with a cylinder, 
mean curvature flow leads to a pinch-off.
We show the results for the scheme $(\BGNmckappa_m)^h$,
with the discretization parameters $J=128$ and $\ttau = 10^{-4}$, 
in Figure~\ref{fig:dziukk91}.
\begin{figure}
\center
\begin{minipage}{0.7\textwidth}
\includegraphics[angle=-90,width=0.2\textwidth]{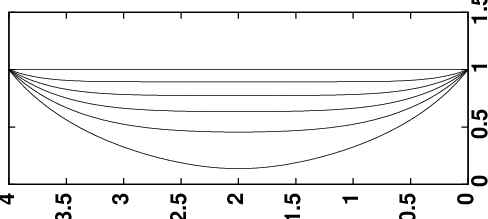}
\includegraphics[angle=-90,width=0.2\textwidth]{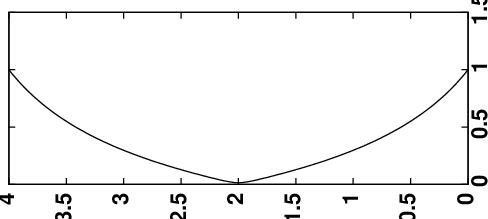}
\includegraphics[angle=-90,width=0.5\textwidth]{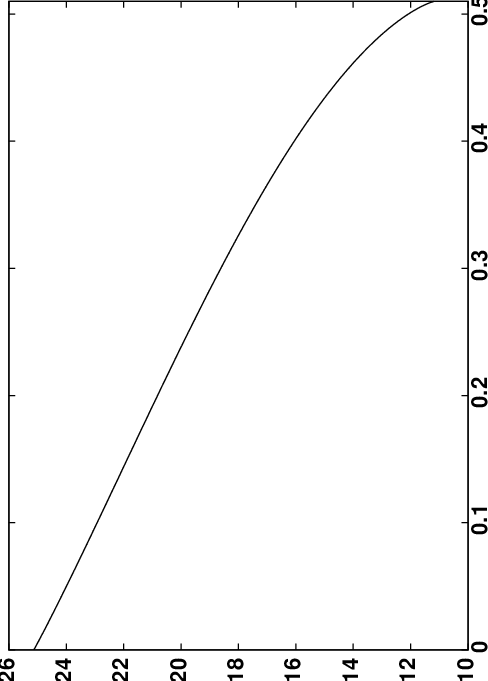}
\end{minipage}\quad
\begin{minipage}{0.25\textwidth}
\includegraphics[angle=-90,width=0.95\textwidth]{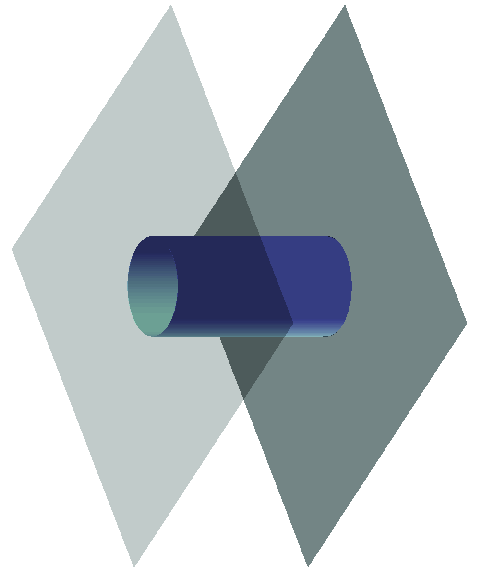}
\includegraphics[angle=-90,width=0.95\textwidth]{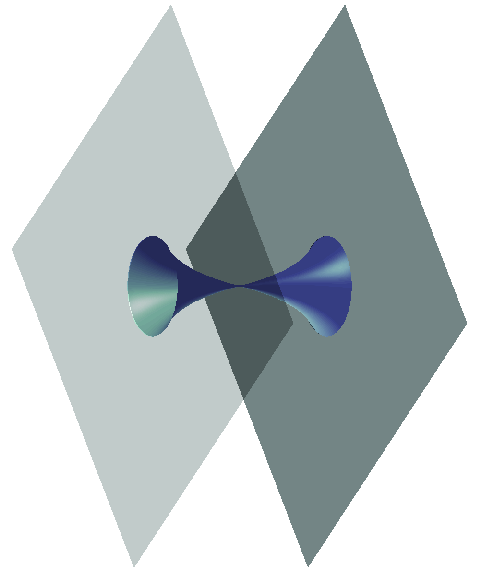}
\end{minipage}
\caption{$(\BGNmckappa_m)^h$ $[\partial_D I = \partial I = \{0,1\}]$
Evolution for a cylinder with fixed boundary. Plots are at times
$t=0,0.1,\ldots,0.5$. 
We also show a plot at time $t=0.51$, as well as a plot of the discrete energy.
On the right we visualize the axisymmetric surface $\mathcal{S}^m$ generated by
$\Gamma^m$ at times $t=0$ and $t=0.51$.
}
\label{fig:dziukk91}
\end{figure}%

For the next two experiments, we consider a cylinder attached to two
parallel hyperplanes, with prescribed contact angle conditions, recall
(\ref{eq:mcbc2}). To this end, we set $\partial_2 I = \partial I = \{0,1\}$ 
and use the discretization parameters $J=128$ and $\ttau = 10^{-3}$.
Letting $\sliprho^{(0)} = \sliprho^{(1)} = -\frac12$ and starting with a
cylinder, the evolution yields a growing catenoid-like surface,
see Figure~\ref{fig:cylinder_rho-05}. We observe convergence to a
travelling wave type solution, with the associated energy unbounded from 
below. We conjecture that the profile of the curve approaches in the limit the
so-called grim reaper solution, see \cite[p.\ 15]{Mantegazza11} and 
\cite{Grayson87},
\begin{equation} \label{eq:grimreaper}
\vec g (\rho,t) = (z_0 + \tfrac\pi3\,t)\,\vec\ek_1 + 
(-\tfrac3\pi\,\ln\cos\left(\tfrac\pi3\,(\rho-\tfrac12)\right), \rho)^T\,,
\end{equation}
where $z_0 \in \bR$ specifies the position of the travelling wave solution at
time $t=0$.
In fact, plotting $\vec g (\rho,t) - (z_0 + \tfrac\pi3\,t)\,\vec\ek_1$
at time $t=4$ versus 
$\vec X^m - (\min_{\rho \in \overline I} \vec X^m\,.\,\vec\ek_1)\,\vec\ek_1$
for our final solution in Figure~\ref{fig:cylinder_rho-05},
yields perfect agreement between the two graphs.
\begin{figure}
\center
\mbox{
\includegraphics[angle=-90,width=0.45\textwidth]{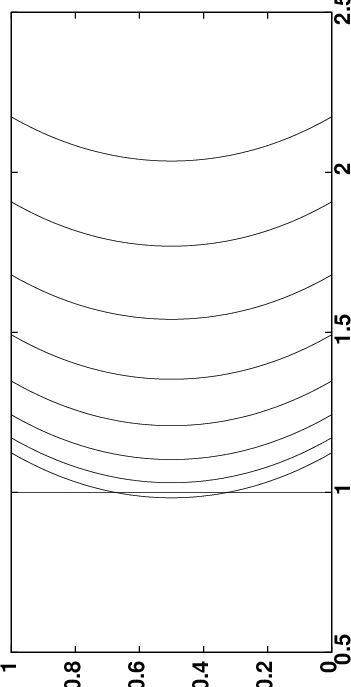}
\includegraphics[angle=-90,width=0.31\textwidth]{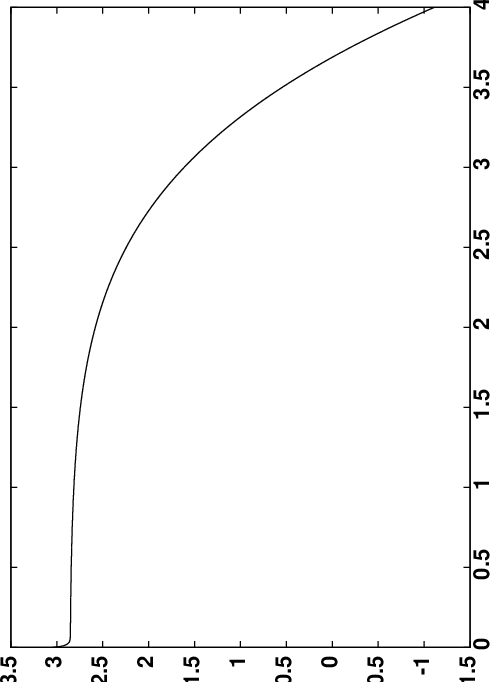}
\includegraphics[angle=-90,width=0.23\textwidth]{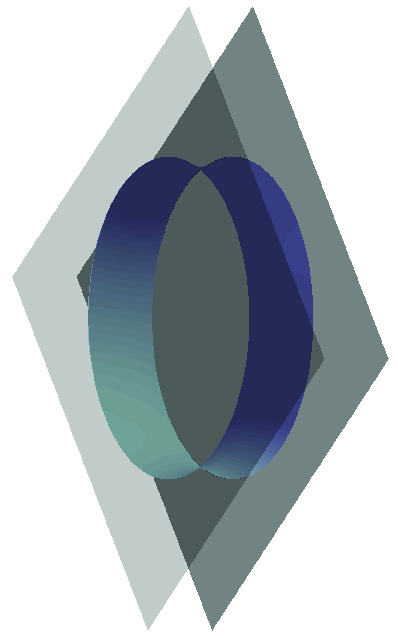}}
\caption{
$(\BGNmckappa_m)^h$ [$\partial_2 I = \partial I = \{0,1\}$, 
$\sliprho^{(0)} = \sliprho^{(1)} = -\frac12$]
Evolution for an open cylinder attached to $\bR \times \{ 0 \} \times \bR$
and $\bR \times \{ 1 \} \times \bR$. Solution at times $t=0,0.5,\ldots,4$,
as well as a plot of the discrete energy over time.
We also visualize the axisymmetric surface $\mathcal{S}^m$ generated by
$\Gamma^m$ at time $t=4$.
}
\label{fig:cylinder_rho-05}
\end{figure}%
We conjecture that the speed of the travelling wave type solution will approach
$\frac\pi3$, the speed of (\ref{eq:grimreaper}). To test this
conjecture, we continue the evolution until $t=100$ and plot the evolution of
$\vec X^m(0)\,.\,\vec\ek_1$ over time, comparing the graph with a suitably
chosen line with slope $\frac\pi3$, see Figure~\ref{fig:cylinder_rho-05_long}.
As we can see, the speed of the curve does indeed approach $\frac\pi3$.
\begin{figure}
\center
\includegraphics[angle=-90,width=0.3\textwidth]{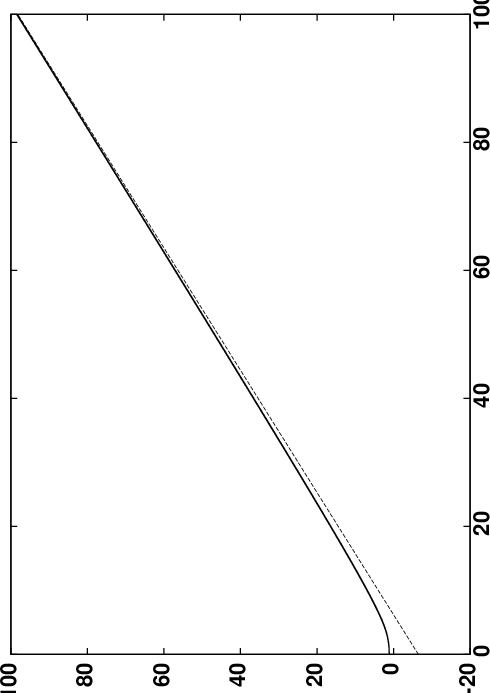}
\caption{
We plot $\vec X^m(0)\,.\,\vec\ek_1$ over time, compared with the linear
function $t \mapsto \frac\pi3\,t - \frac{13}2$, for the evolution in 
Figure~\ref{fig:cylinder_rho-05} over the larger time interval $[0,100]$.
}
\label{fig:cylinder_rho-05_long}
\end{figure}%

If we let $\sliprho^{(0)} = \sliprho^{(1)} = \frac12$, on the other hand,
we observe a shrinking surface, with the radius of the contact circles 
eventually converging to zero. On reaching two single contact points with the
external substrates, we allow the discrete surface to detach from the two
hyperplanes and to continue the evolution as a closed genus 0 surface, 
see Figure~\ref{fig:cylinder_rho05} for the evolution. To allow for an 
accurate resolution of the detaching, we employ the smaller time step size
$\ttau=10^{-6}$ for this simulation.
\begin{figure}
\center
\includegraphics[angle=-90,width=0.65\textwidth]{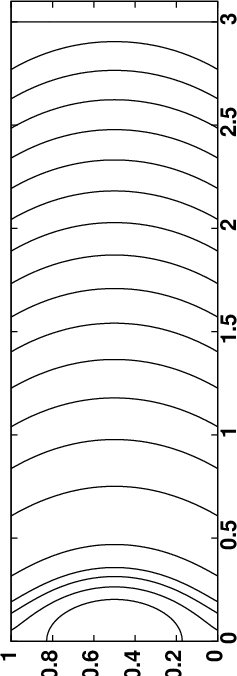}
\includegraphics[angle=-90,width=0.3\textwidth]{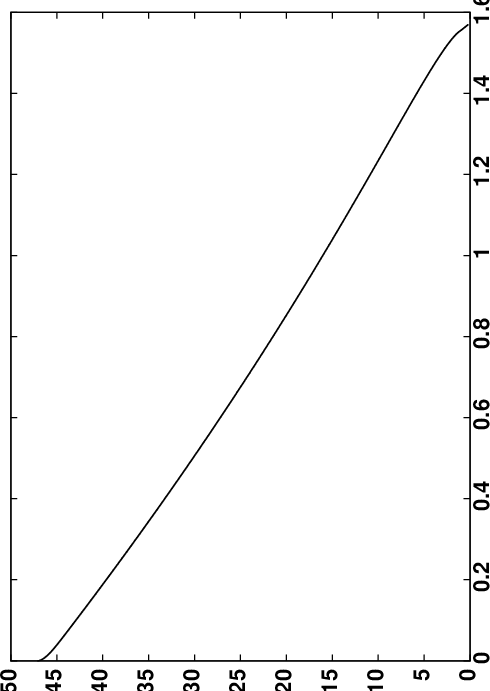}
\mbox{
\includegraphics[angle=-90,width=0.2\textwidth]{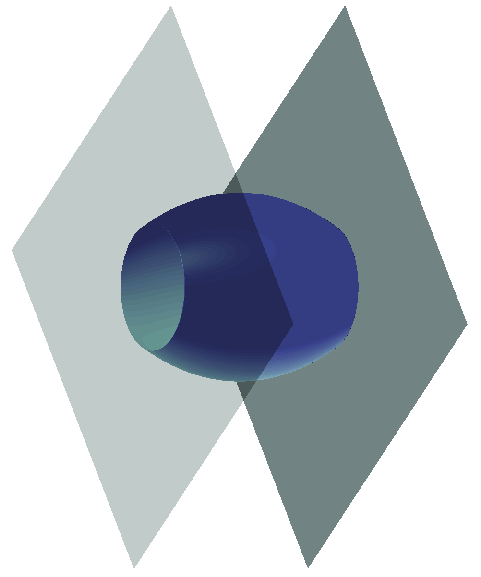}
\includegraphics[angle=-90,width=0.2\textwidth]{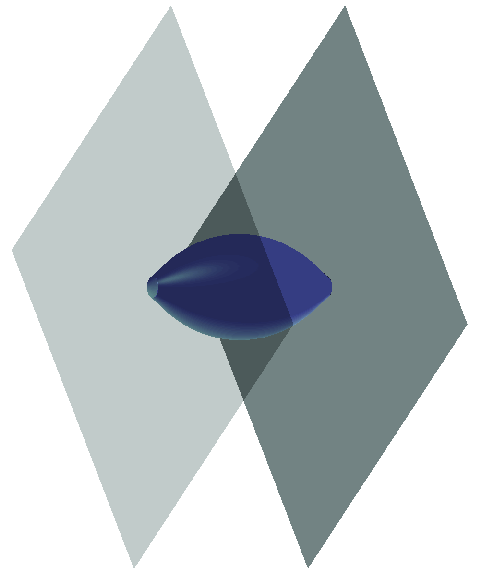}
\includegraphics[angle=-90,width=0.2\textwidth]{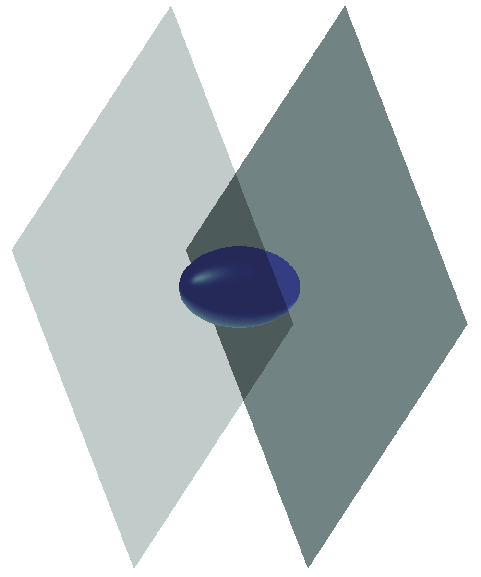}
}
\caption{
$(\BGNmckappa_m)^h$ [$\partial_2 I = \partial I = \{0,1\}$, 
$\sliprho^{(0)} = \sliprho^{(1)} = \frac12$]
Evolution for an open cylinder attached to $\bR \times \{ 0 \} \times \bR$
and $\bR \times \{ 1 \} \times \bR$. Solution at times $t=0,0.1,\ldots,
1.5,1.53,1.54,1.55,1.56$,
as well as a plot of the discrete energy over time.
We also visualize the axisymmetric surface $\mathcal{S}^m$ generated by
$\Gamma^m$ at times $t=1.5$, $t=1.55$ and $t=1.56$.
}
\label{fig:cylinder_rho05}
\end{figure}%

\subsubsection{Surface patch within a cylinder}
For the next experiment, we consider a disk attached to an infinite
cylinder of radius 1, with prescribed contact angle conditions, recall
(\ref{eq:mcbc1}). To this end, we set $\partial_0 I = \{0\}$ and
$\partial_1 I = \{1\}$,
and use the discretization parameters $J=128$ and $\ttau = 10^{-3}$.
Letting $\sliprho^{(1)} = -\frac12$ and starting with a
disk, the evolution seems to converge to a translating surface patch,
see Figure~\ref{fig:grimreaper_rho-05}. 
Taking  the angle condition (\ref{eq:mcbc1}), there is a unique convex
scaled surface grim reaper profile moving with constant speed by translation.
We conjecture that a general class of initial data will converge to this shape
for large times. We refer to \cite{AltschulerW94} for more information
on the grim reaper analogues in higher dimensions.
\begin{figure}
\center
\begin{minipage}{0.3\textwidth}
\includegraphics[angle=-90,width=0.75\textwidth]{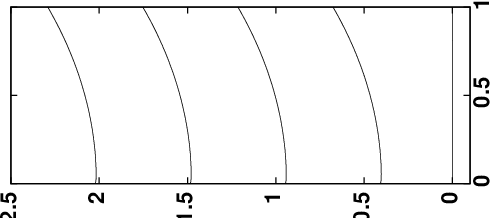}
\end{minipage} \quad
\begin{minipage}{0.45\textwidth}
\center
\includegraphics[angle=-90,width=0.70\textwidth]{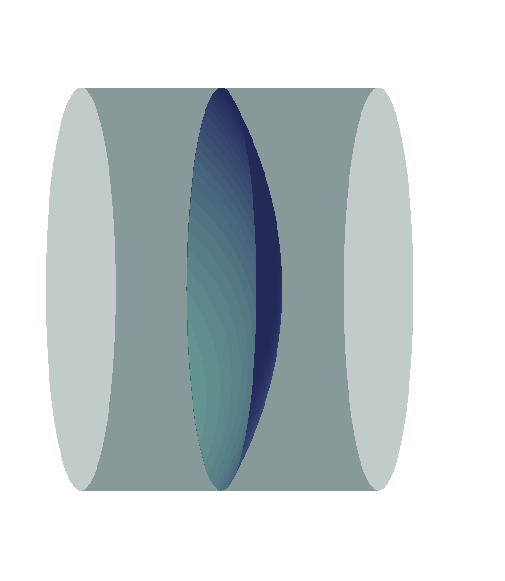}
\includegraphics[angle=-90,width=0.60\textwidth]{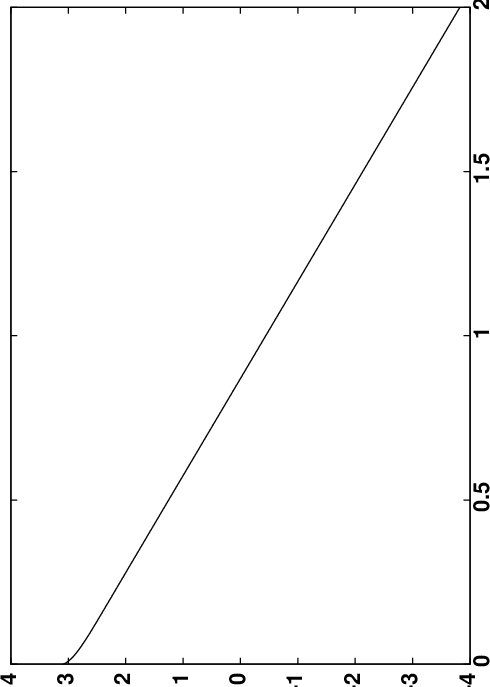}
\end{minipage}
\caption{
$(\BGNmckappa_m)^h$ [$\partial_0 I = \{0\}$, $\partial_1 I = \{1\}$, 
$\sliprho^{(1)} = -\frac12$]
Evolution for a disk attached to an infinite cylinder of radius 1. 
Solution at times $t=0,0.5,\ldots,2$,
as well as a plot of the discrete energy over time.
We also visualize the axisymmetric surface $\mathcal{S}^m$ generated by
$\Gamma^m$ at time $t=2$.
}
\label{fig:grimreaper_rho-05}
\end{figure}%

\subsection{Numerical results for conserved mean curvature flow} 
\label{sec:mcnrV}

\subsubsection{Sphere}
Clearly, a sphere is a stationary solution for conserved mean curvature flow,
(\ref{eq:nlmcf2}) with (\ref{eq:fmcf}). 
Hence, setting $\partial_0 I = \partial I = \{0,1\}$ and
choosing as initial data the nonuniform approximation of a semicircle
(\ref{eq:X0}) with $J=64$, 
we now investigate the different tangential motions exhibited by our proposed
schemes.
The initial data $\vec X^0$ has a ratio $\ratio^0=1.22$, 
recall (\ref{eq:ratio}).
We set $\ttau = 10^{-4}$ and integrate the evolution until time $T=1$. 
For the three schemes 
$(\BGNmckappa_m)^h$, $(\BGNmc_{m,\star})^h$ and $(\BGNmc_{m,\star})$
the element ratios $\ratio^m$ at time $T=1$ are 
$1.01, 73.13, 2.94$, 
and the enclosed volume is preserved almost exactly by all 
the schemes. We show the final distributions of vertices, 
and plots of $\ratio^m$ over time in Figure~\ref{fig:mcfTM}.
\begin{figure}
\center
\hspace*{-2mm}
\mbox{
\includegraphics[angle=-90,width=0.2\textwidth]{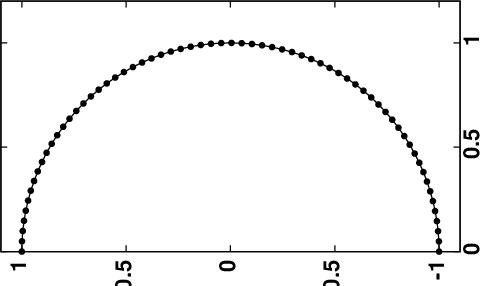}
\qquad\qquad
\includegraphics[angle=-90,width=0.2\textwidth]{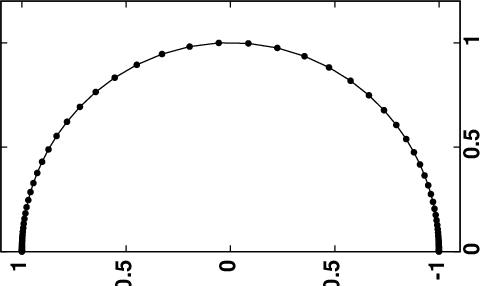}
\qquad\qquad
\includegraphics[angle=-90,width=0.2\textwidth]{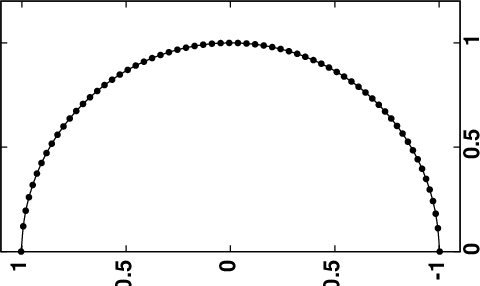}}
\hspace*{-2mm}
\mbox{
\includegraphics[angle=-90,width=0.3\textwidth]{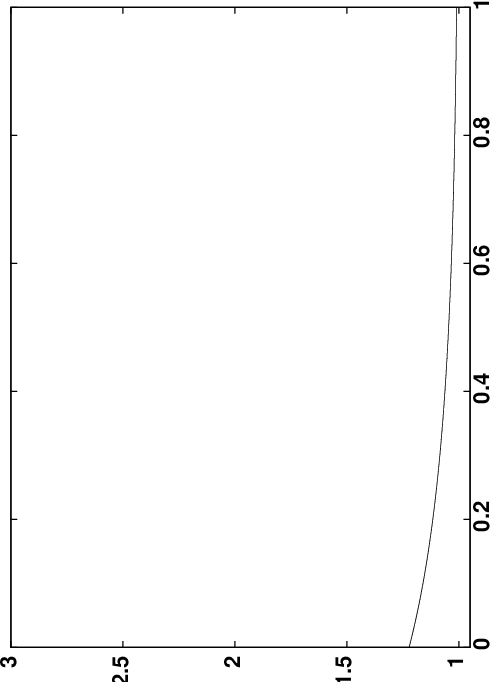}
\includegraphics[angle=-90,width=0.3\textwidth]{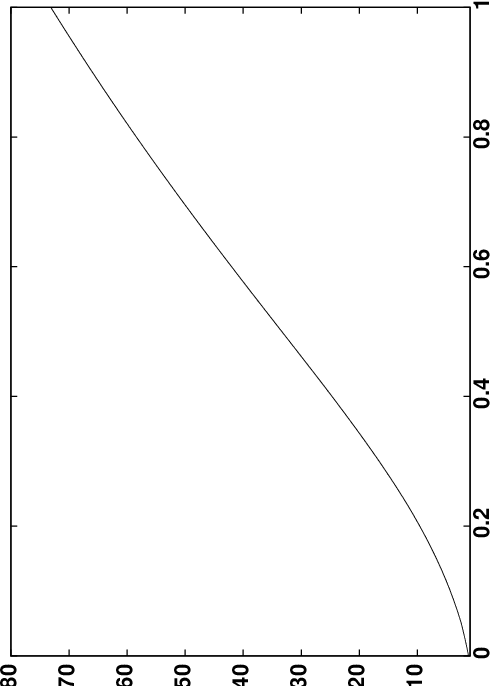}
\includegraphics[angle=-90,width=0.3\textwidth]{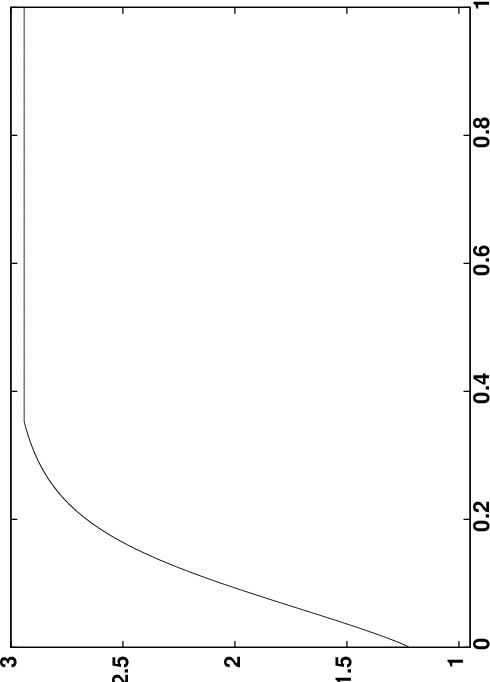}}
\caption{
Comparison of the different schemes for conserved mean curvature flow,
(\ref{eq:nlmcf2}) with (\ref{eq:fmcf}), of the unit sphere.
Left to right: $(\BGNmckappa_m^{f,V})^h$, $(\BGNmc_{m,\star}^{f,V})^h$ 
and $(\BGNmc_{m,\star}^{f,V})$.
Plots are for $\vec X^m$ at time $t=1$ and for the ratio $\ratio^m$ over time.
The element ratios $\ratio^m$ at time $t=1$ are
$1.01$, $73.13$ and $2.94$, respectively. 
}
\label{fig:mcfTM}
\end{figure}%
An insight that we gain from this set of experiments is that the tangential
motion displayed by the scheme $(\BGNmc_{m,\star})^h$ can lead to very
nonuniform meshes. Hence, for the remainder of this paper, we will only 
present numerical results for the two schemes $(\BGNmckappa_m)^h$ and 
$(\BGNmc_{m,\star})$ and their nonlinear variants. 
Note that the former is a linear fully
discrete approximation of $(\BGNmckappa_h)^h$, for which the
equidistribution property (\ref{eq:equid}) holds. The latter, on the other
hand, is a nonlinear scheme that is unconditionally stable, recall
Theorem~\ref{thm:stab}.
As the results for $(\BGNmckappa_m)^h$ and 
$(\BGNmc_{m,\star})$ are often indistinguishable, we only 
visualize the numerical results for the former from now on. 

\subsubsection{Genus 0 surface}
An experiment for a cigar shape can be seen in 
Figure~\ref{fig:mctallcigar}. Here we have once again that
$\partial_0 I = \partial I = \{0,1\}$.
The discretization parameters are $J=128$ and $\ttau = 10^{-4}$.
The relative volume loss for this experiment for $(\BGNmckappa_m^{f,V})^h$ 
is $0.09\%$, while for $(\BGNmc_{m,\star}^{f,V})$ it is $-0.01\%$.
The same experiment for the scheme $(\BGNmc_{m,\star}^{f,V})^h$ yields a very
nonuniform mesh, with the final ratio $\ratio^m > 430$.
\begin{figure}
\center
\begin{minipage}{0.35\textwidth}
\includegraphics[angle=-90,width=0.5\textwidth]{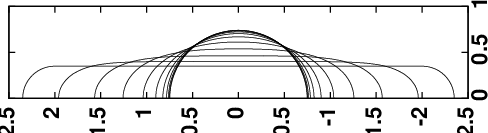} \
\includegraphics[angle=-90,width=0.45\textwidth]{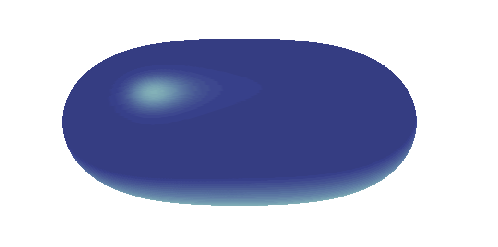}
\end{minipage} \qquad
\begin{minipage}{0.35\textwidth}
\includegraphics[angle=-90,width=0.95\textwidth]{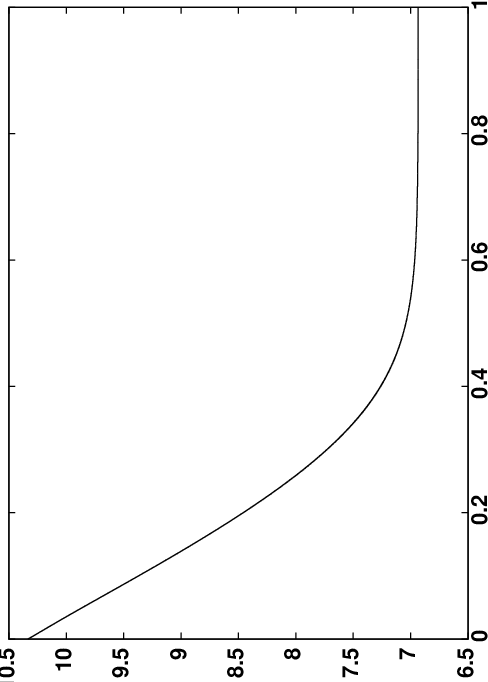} \\
\includegraphics[angle=-90,width=0.95\textwidth]{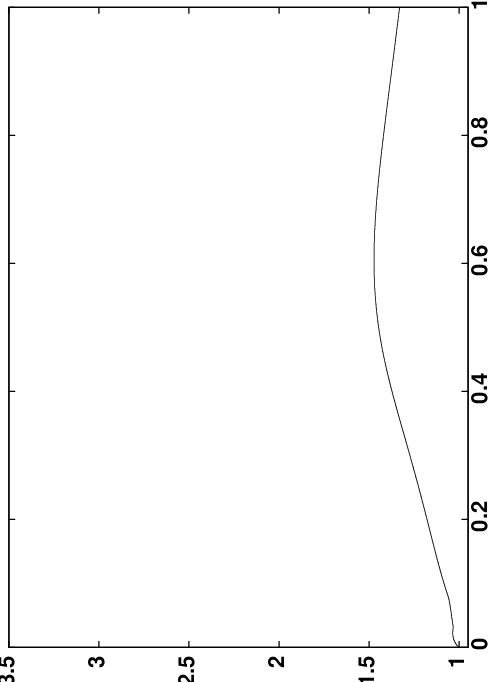} \\
\includegraphics[angle=-90,width=0.95\textwidth]{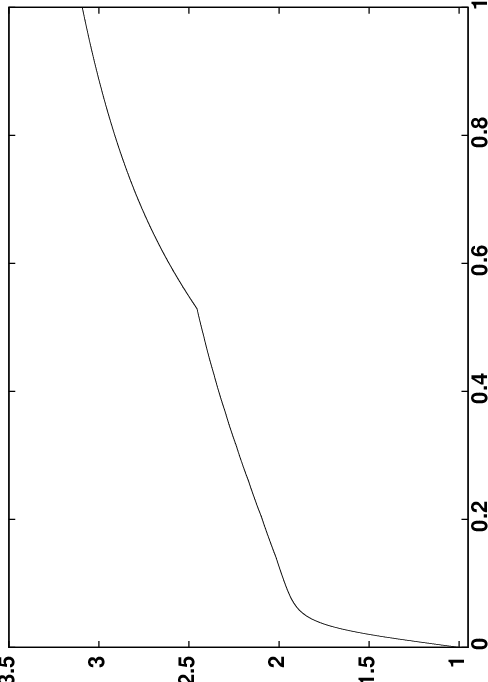}
\end{minipage}
\caption{
$(\BGNmckappa_m^{f,V})^h$ for (\ref{eq:fmcf}). 
Conserved mean curvature flow for a cigar. Plots are at times
$t=0,0.1,\ldots,1$. 
We also visualize the axisymmetric surface $\mathcal{S}^m$ generated by
$\Gamma^m$ at time $t=0.3$.
On the right are plots of the discrete energy and the ratio $\ratio^m$ and,
as a comparison, a plot of the ratio $\ratio^m$ for the scheme 
$(\BGNmc_{m,\star}^{f,V})$.
}
\label{fig:mctallcigar}
\end{figure}%
An experiment for a disc shape is shown in Figure~\ref{fig:mcflatcigar}.
The discretization parameters are $J=128$ and $\ttau = 10^{-4}$.
The relative volume loss for this experiment for $(\BGNmckappa_m^{f,V})^h$ 
is $-0.02\%$, while for $(\BGNmc_{m,\star}^{f,V})$ it is $-0.01\%$.
Once again, the scheme $(\BGNmc_{m,\star}^{f,V})^h$ yields a very
nonuniform mesh for this simulation, with the final ratio $\ratio^m > 145$.
\begin{figure}
\center
\begin{minipage}{0.35\textwidth}
\includegraphics[angle=-90,width=0.95\textwidth]{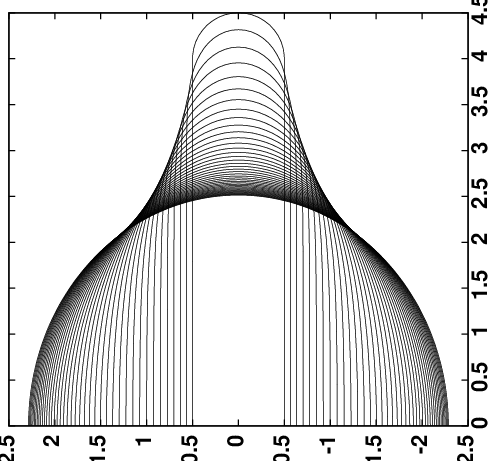} \\
\includegraphics[angle=-90,width=0.95\textwidth]{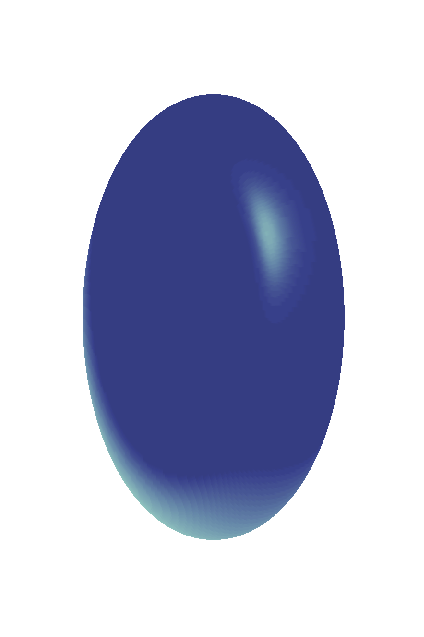}
\end{minipage} \qquad
\begin{minipage}{0.35\textwidth}
\includegraphics[angle=-90,width=0.95\textwidth]{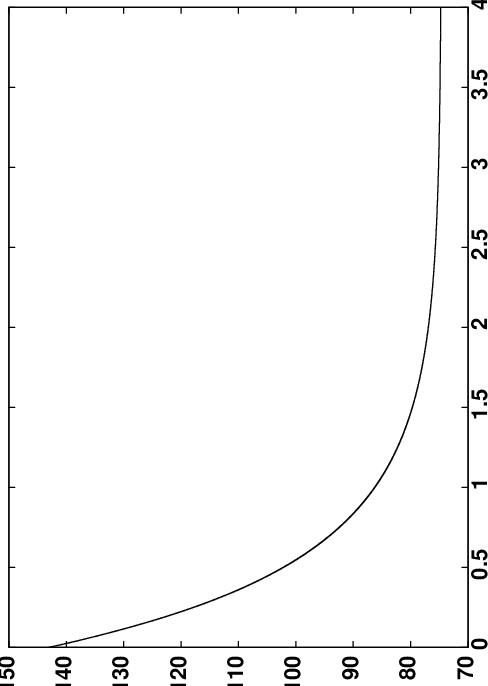} \\
\includegraphics[angle=-90,width=0.95\textwidth]{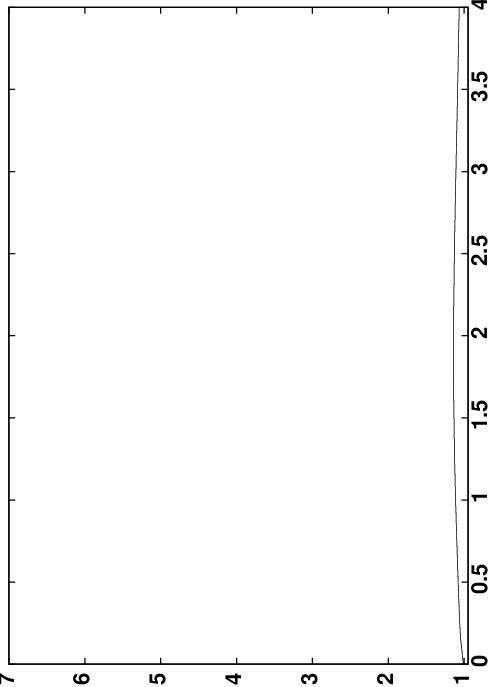} \\
\includegraphics[angle=-90,width=0.95\textwidth]{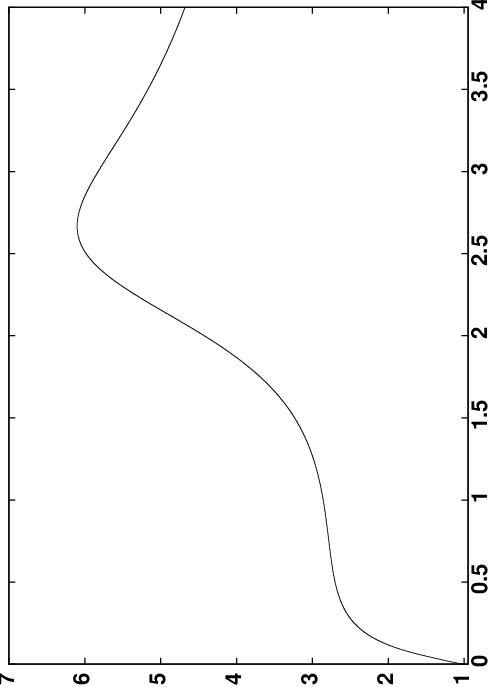}
\end{minipage}
\caption{
$(\BGNmckappa_m^{f,V})^h$ for (\ref{eq:fmcf}). 
Conserved mean curvature flow for a disc. 
Plots are at times $t=0,0.1,\ldots,4$. 
We also visualize the axisymmetric surface $\mathcal{S}^m$ generated by
$\Gamma^m$ at time $t=0.5$.
On the right are plots of the discrete energy and the ratio $\ratio^m$ and,
as a comparison, a plot of the ratio $\ratio^m$ for the scheme 
$(\BGNmc_{m,\star}^{f,V})$.
}
\label{fig:mcflatcigar}
\end{figure}%

\subsubsection{Genus 1 surface}
We repeat the simulation in Figure~\ref{fig:torusR1r05} for conserved mean 
curvature flow, i.e.\ (\ref{eq:nlmcf2}) with (\ref{eq:fmcf}),
using the scheme $(\BGNmckappa_m^{f,V})^h$. Conservation of
the enclosed volume means that the torus can no longer shrink to a circle.
Hence the torus now attempts to close up and change topology, as can be seen
from the numerical results in Figure~\ref{fig:torusR1r05cons}.
As for the original experiment, we use the discretization parameters 
$J=256$ and $\ttau = 10^{-4}$. The relative enclose volume loss for this
experiment is $-0.00\%$.
The evolutions for the schemes $(\BGNmc_{m,\star}^{f,V})^{h}$ and
$(\BGNmc_{m,\star}^{f,V})$ are nearly identical to what is shown in 
Figure~\ref{fig:torusR1r05cons}, 
with a relative volume loss of $0.01\%$ in both cases.
\begin{figure}
\center
\mbox{
\includegraphics[angle=-90,width=0.32\textwidth]{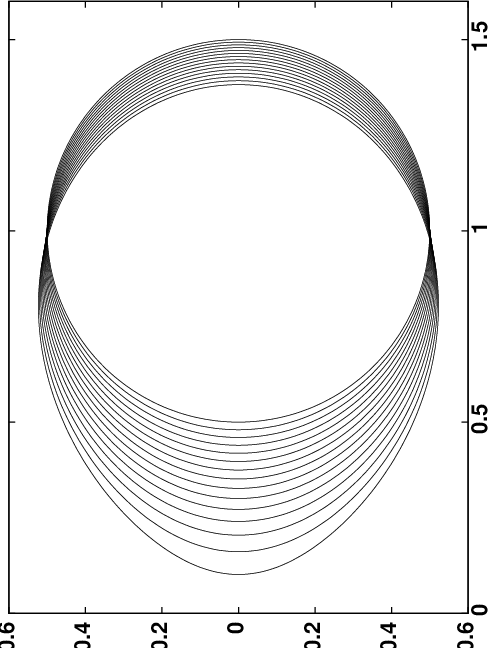}
\includegraphics[angle=-90,width=0.32\textwidth]{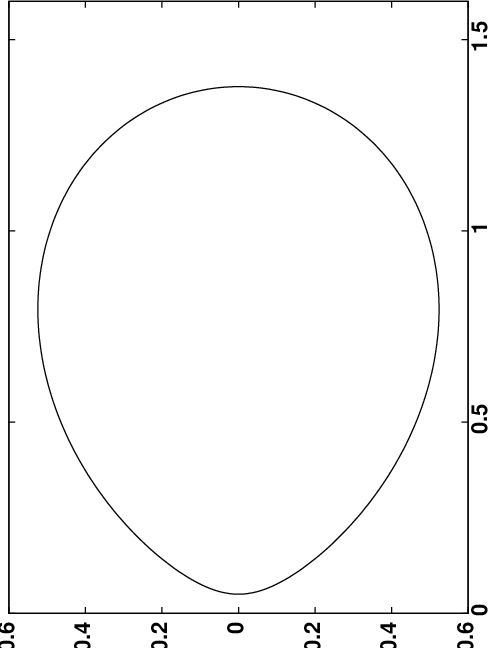}
\includegraphics[angle=-90,width=0.34\textwidth]{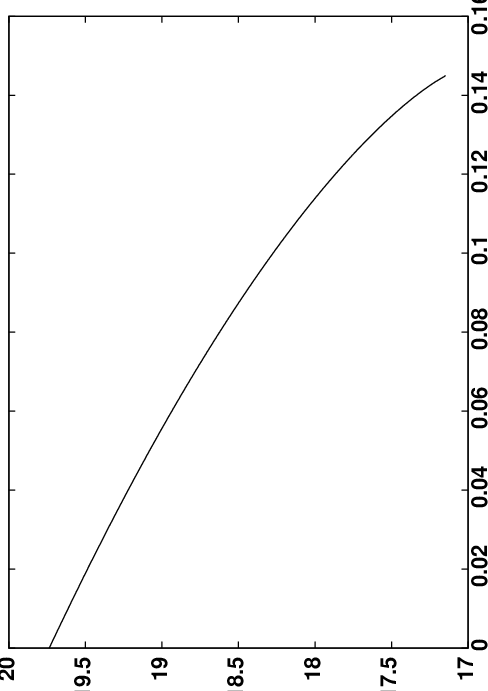}}
\includegraphics[angle=-90,width=0.20\textwidth]{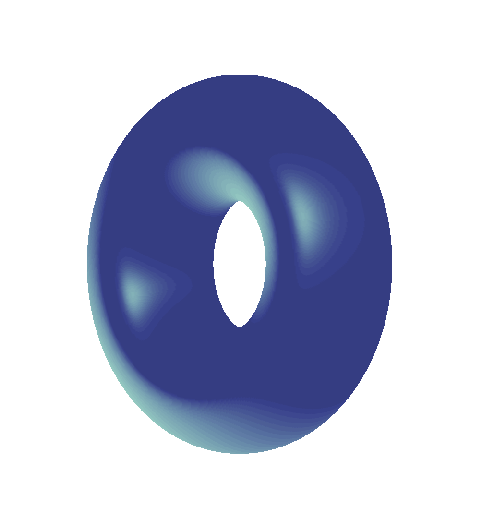}
\qquad
\includegraphics[angle=-90,width=0.20\textwidth]{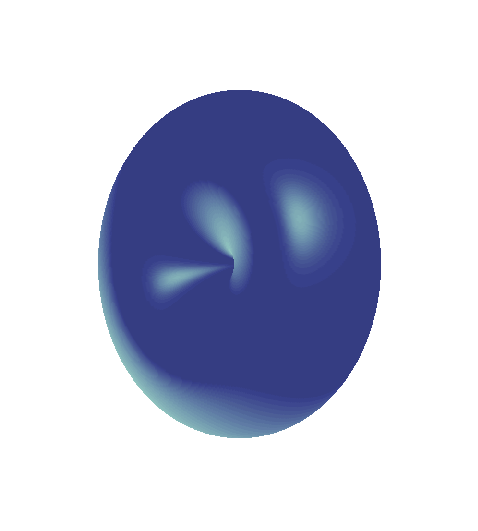}
\caption{$(\BGNmckappa_m^{f,V})^h$ for (\ref{eq:fmcf}). 
Conserved mean curvature flow for a torus with radii $R=1$, $r=0.5$. 
Plots are at times $t=0,0.01,\ldots,0.14$. 
We also show a plot at time $t=0.145$,
together with a plot of the discrete energy over time.
Below we visualize the axisymmetric surface $\mathcal{S}^m$ generated by
$\Gamma^m$ at times $t=0$ and $t=0.145$.}
\label{fig:torusR1r05cons}
\end{figure}%

Finally, we present an example for conserved mean curvature flow, 
(\ref{eq:nlmcf2}) with (\ref{eq:fmcf}), for the scheme 
$(\BGNmckappa_m^{f,V})^h$ with the initial data $\vec X^0$ parameterizing a
closed spiral, so that the approximated surface has genus 1. 
As can be seen from Figure~\ref{fig:spiral}, the spiral slowly untangles,
until the surface becomes a torus.
For this experiment we use the discretization parameters 
$J=1024$ and $\ttau = 10^{-6}$. The relative enclosed volume loss for this
experiment is $0.01\%$.
The evolutions for the schemes $(\BGNmc_{m,\star}^{f,V})^{h}$ and
$(\BGNmc_{m,\star}^{f,V})$ are nearly identical to what is shown in 
Figure~\ref{fig:spiral}, 
with a relative volume loss of $0.01\%$ in both cases.
\begin{figure}
\center
\newcommand\localwidth{0.24\textwidth}
\includegraphics[angle=-90,width=\localwidth]{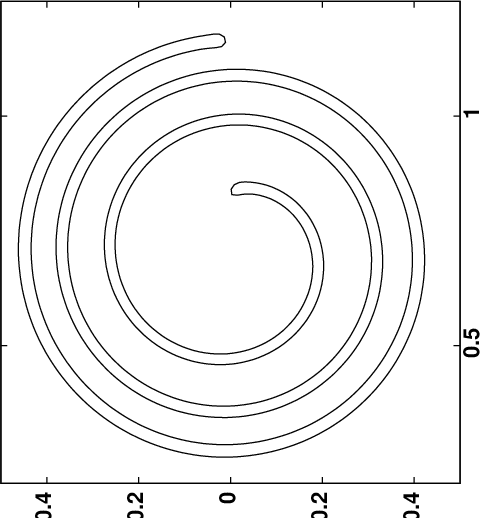}
\includegraphics[angle=-90,width=\localwidth]{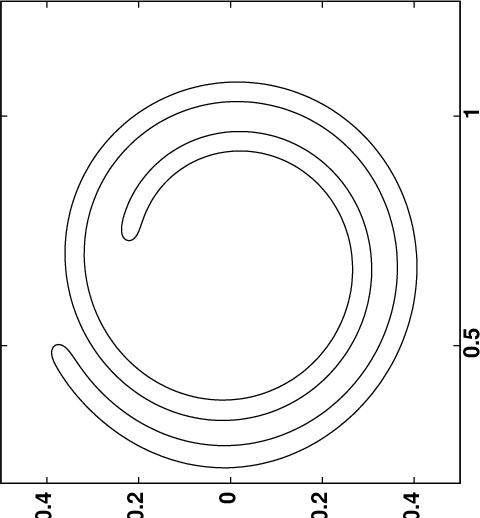}
\includegraphics[angle=-90,width=\localwidth]{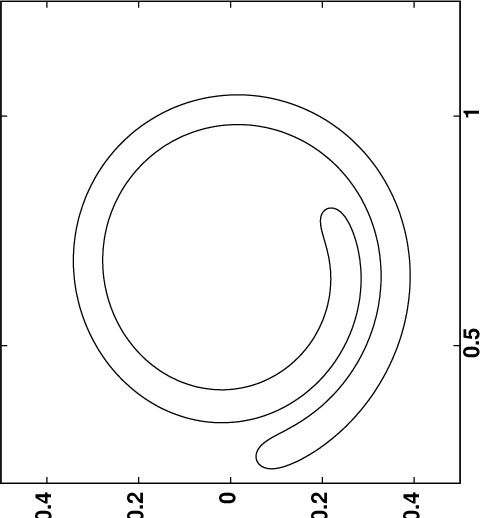}
\includegraphics[angle=-90,width=\localwidth]{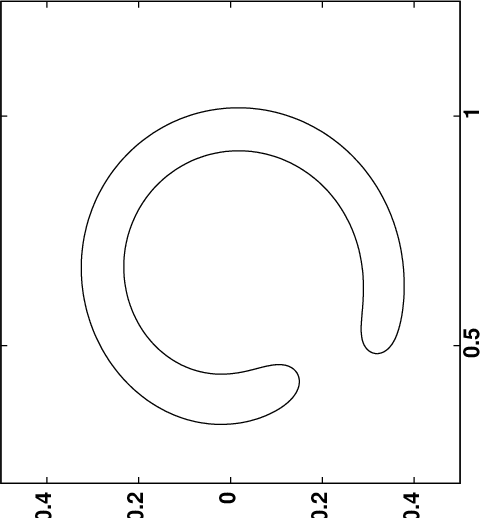}
\includegraphics[angle=-90,width=\localwidth]{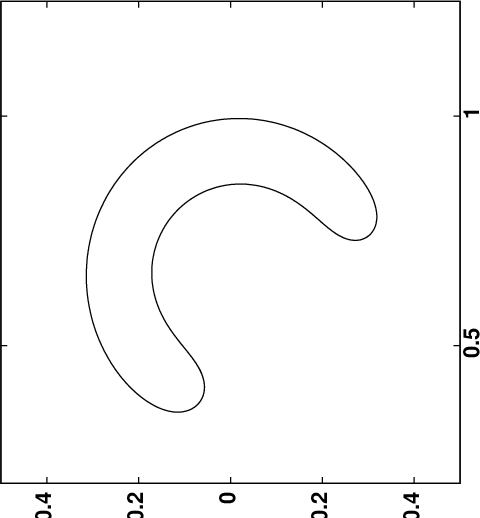}
\includegraphics[angle=-90,width=\localwidth]{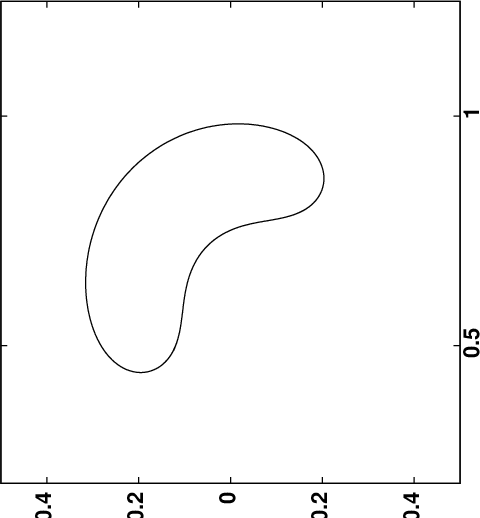}
\includegraphics[angle=-90,width=\localwidth]{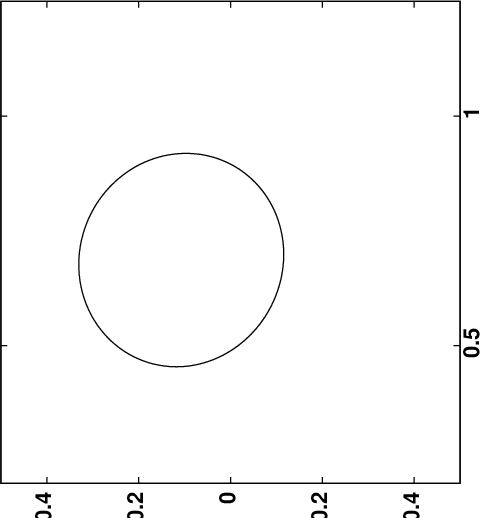}
\
\includegraphics[angle=-90,width=0.24\textwidth]{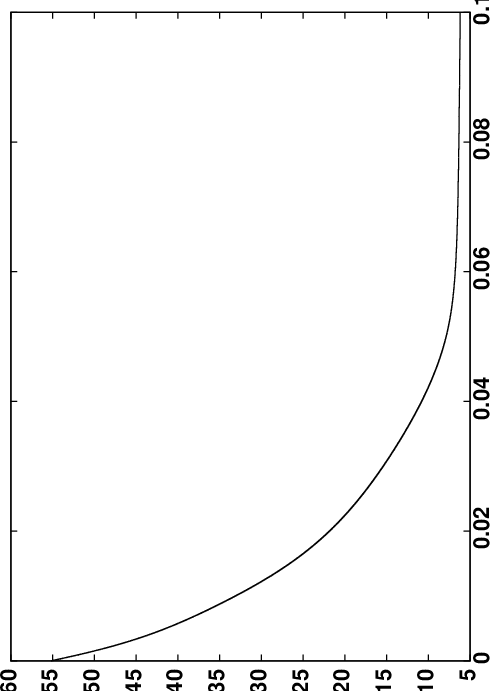}
\mbox{
\includegraphics[angle=-90,width=0.25\textwidth]{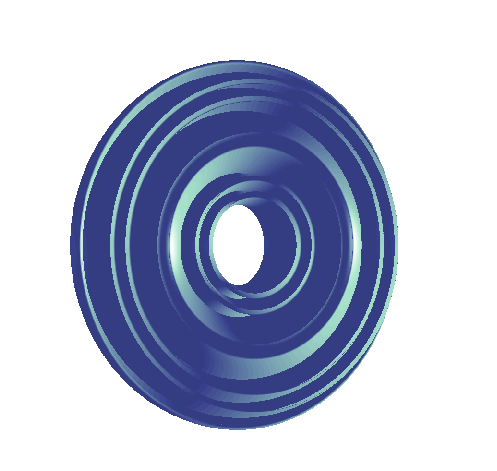}
\includegraphics[angle=-90,width=0.25\textwidth]{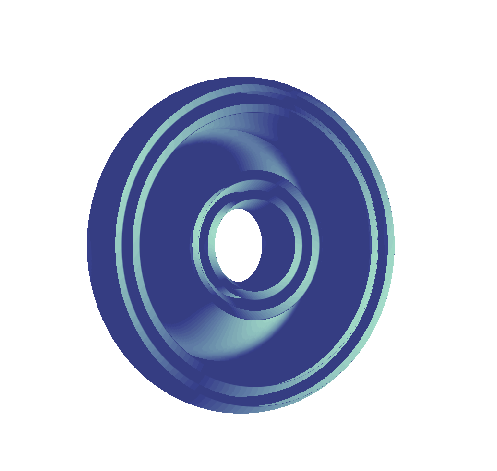}
\includegraphics[angle=-90,width=0.25\textwidth]{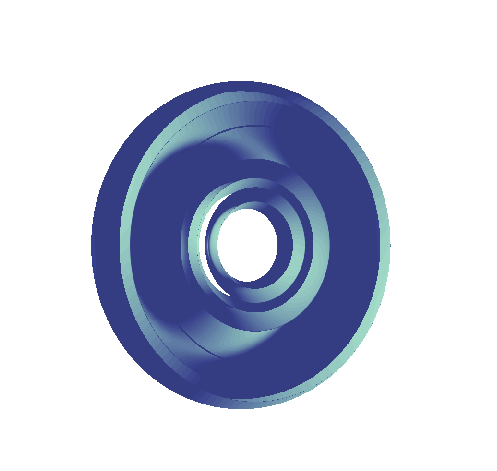}
\includegraphics[angle=-90,width=0.25\textwidth]{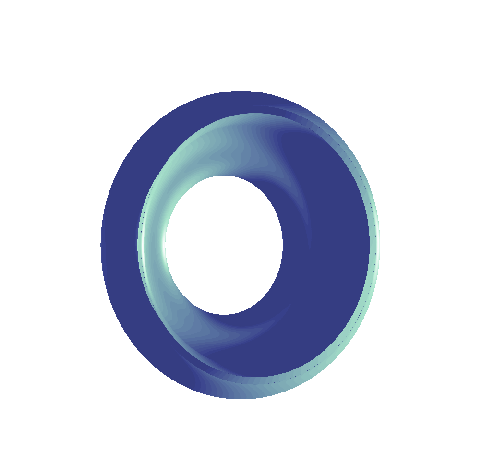}
}
\caption{$(\BGNmckappa_m^{f,V})^h$ for (\ref{eq:fmcf}). 
Conserved mean curvature flow. 
Plots are at times $t=0,0.01,\ldots,0.05,0.1$. 
We also show a plot of the discrete energy over time.
Below we visualize the part of the axisymmetric surface $\mathcal{S}^m$ 
generated by $\Gamma^m \cap \bR \times [-0.2,\infty)$ 
at times $t=0$, $t=0.01$, $t=0.02$ and $t=0.05$.}
\label{fig:spiral}
\end{figure}%

\subsection{Numerical results for nonlinear mean curvature flow} 
\label{sec:nlmcfnr}

Similarly to (\ref{eq:truer}), it is easy to show that a sphere of radius 
$r(t)$, with
\begin{equation} \label{eq:truerbeta}
r(t) = [1 - 2^\beta\,(\beta+1)\,t]^\frac1{\beta+1}\,,\quad r(0) = 1\,,
\end{equation}
is a solution to (\ref{eq:nlmcf}) with (\ref{eq:fbeta}). 
We use this true solution for a convergence test for $\beta = \tfrac12$, 
similarly to Table~2 in \cite{gflows3d}. 
Here we start with the nonuniform partitioning (\ref{eq:X0}) 
of a semicircle of radius $r(0)=r_0=1$ and compute the flow until time 
$T = \tfrac12\,\overline T$, where $\overline T =
\tfrac23\,2^{-\frac12}$ denotes that extinction time of the shrinking sphere.
We compute the error $\errorXx$, recall (\ref{eq:errorXx}),
over the time interval $[0,T]$ between the true solution (\ref{eq:truerbeta}) 
and the discrete solutions for the schemes
$(\BGNmckappa_m^f)^h$ and $(\BGNmc_{m,\star}^f)$.
Here we used the time step size $\ttau=0.1\,h^2_{\Gamma^0}$,
where $h_{\Gamma^0}$ is the maximal edge length of $\Gamma^0$.
The computed errors are reported in Table~\ref{tab:nlmcfbeta}.
\begin{table}
\center
\begin{tabular}{|rr|c|c|c|c|}
\hline
& & \multicolumn{2}{c|}{$(\BGNmckappa_m^f)^h$}&
\multicolumn{2}{c|}{$(\BGNmc_{m,\star}^f)$} \\
$J$ & $h_{\Gamma^0}$ & $\errorXx$ & EOC & $\errorXx$ & EOC \\ \hline
32   & 1.0792e-01 & 7.4955e-05 & --       & 3.0322e-03 & -- \\ 
64   & 5.3988e-02 & 1.8223e-05 & 2.041792 & 1.0450e-03 & 1.538013 \\          
128  & 2.6997e-02 & 4.5218e-06 & 2.011114 & 3.5931e-04 & 1.540449 \\ 
256  & 1.3499e-02 & 1.1282e-06 & 2.002981 & 1.2357e-04 & 1.539983 \\ 
512  & 6.7495e-03 & 2.8189e-07 & 2.000819 & 4.2698e-05 & 1.533088 \\ 
\hline
\end{tabular}
\caption{Errors for the convergence test for (\ref{eq:truerbeta})
over the time interval $[0,\tfrac12\,\overline{T}]$.}
\label{tab:nlmcfbeta}
\end{table}%

We repeat the same convergence experiment for the inverse mean curvature flow,
where we note that a sphere of radius $r(t)$, with
\begin{equation} \label{eq:truerIMCF}
r(t) = \exp(\tfrac12\,t)\,,\quad r(0) = 1\,,
\end{equation}
is a solution to (\ref{eq:nlmcf}) with (\ref{eq:fimcf}). 
The errors are reported in Table~\ref{tab:nlmcfimcf}. We recall that these
numbers can be compared to the fully 3d results in 
Table~3 in \cite{gflows3d}.
\begin{table}
\center
\begin{tabular}{|rr|c|c|c|c|}
\hline
& & \multicolumn{2}{c|}{$(\BGNmckappa_m^f)^h$}&
\multicolumn{2}{c|}{$(\BGNmc_{m,\star}^f)$} \\
$J$ & $h_{\Gamma^0}$ & $\errorXx$ & EOC & $\errorXx$ & EOC \\ \hline
32   & 1.0792e-01 & 7.1401e-04 & --       & 1.2445e-02 & --       \\ 
64   & 5.3988e-02 & 1.8106e-04 & 1.980959 & 4.7424e-03 & 1.392919 \\ 
128  & 2.6997e-02 & 4.5484e-05 & 1.993356 & 1.7539e-03 & 1.435281 \\ 
256  & 1.3499e-02 & 1.1388e-05 & 1.997952 & 6.3806e-04 & 1.458880 \\ 
512  & 6.7495e-03 & 2.8483e-06 & 1.999341 & 2.3002e-04 & 1.471933 \\ 
\hline
\end{tabular}
\caption{Errors for the convergence test for (\ref{eq:truerIMCF})
over the time interval $[0,1]$.}
\label{tab:nlmcfimcf}
\end{table}%
It is clear from Tables~\ref{tab:nlmcfbeta} and \ref{tab:nlmcfimcf}
that the solutions to the scheme $(\BGNmckappa_m^f)^h$
appear to converge with the optimal convergence rate 
of $\mathcal{O}(h^2_{\Gamma^0})$. For the scheme $(\BGNmc_{m,\star}^f)$, 
on the other hand, the solutions appear to 
converge with an order less than quadratic, and closer to $\frac32$. We believe
that these lower convergence rates 
are caused by the nonuniform meshes induced by the
scheme $(\BGNmc_{m,\star}^f)$, recall Figure~\ref{fig:mcfTM},
and also by the degeneracy of the coefficients 
$\vec x\,.\,\vec\ek_1$ in $(\BGNmc^f)$.

In the next experiment we repeat the simulation in \cite[Fig.\ 8]{gflows3d} 
for the inverse mean curvature of a torus with radii $R=1$, $r=0.25$.
We recall that for this nonconvex initial data, with 
$\varkappa_{\mathcal{S}}(\cdot,0)<0$, the classical inverse mean curvature 
develops a singularity in finite time, see also \cite{Gerhardt90,Urbas90}.
For the axisymmetric setting we use $I = \bR/\bZ$, so that $\partial I =
\emptyset$.
As the discretization parameters for the scheme $(\BGNmckappa_m^f)^h$
we use $J=256$ and $\ttau = 10^{-4}$.
See Figure~\ref{fig:imcftorus} for the simulation results. Similarly to the
results in \cite[Fig.\ 8]{gflows3d}, the discrete solution becomes unphysical
after around time $0.52$, where we conjecture that the singularity for the
continuous flow occurs.
\begin{figure}
\center
\includegraphics[angle=-90,width=0.35\textwidth]{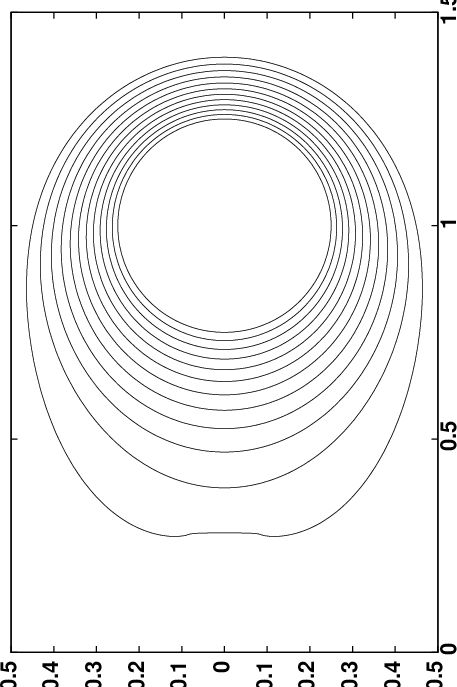}
\includegraphics[angle=-90,width=0.2\textwidth]{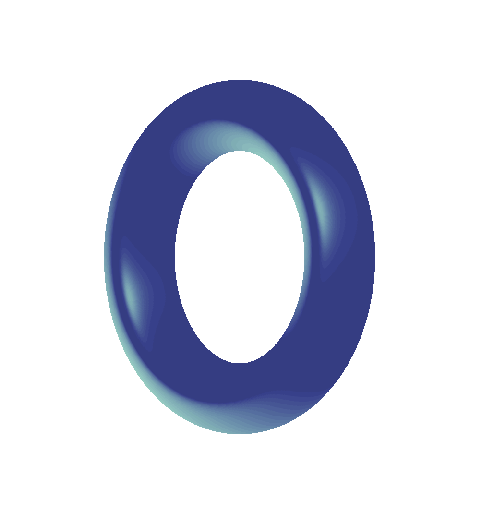}
\includegraphics[angle=-90,width=0.2\textwidth]{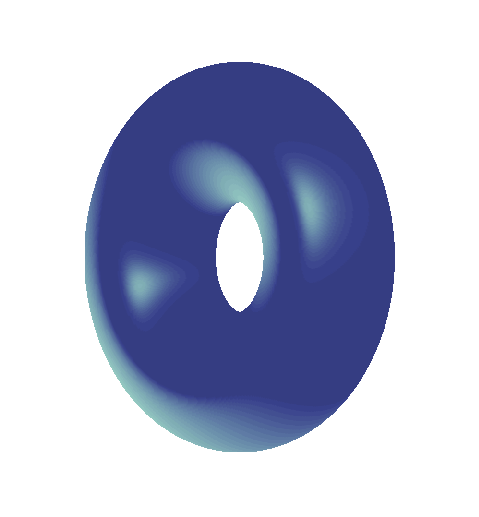}
\caption{$(\BGNmckappa_m^{f})^h$ for (\ref{eq:fimcf}). 
Inverse mean curvature flow for a torus with radii $R=1$, $r=0.25$. 
Plots are at times $t=0,0.05,\ldots,0.55$. 
We also visualize the axisymmetric surface $\mathcal{S}^m$ generated by
$\Gamma^m$ at times $t=0$ and $t=0.5$.}
\label{fig:imcftorus}
\end{figure}%

\subsection{Numerical results for Gauss curvature flow} 
\label{sec:gaussnr}

An experiment for Gauss curvature flow, (\ref{eq:Gaussflow}),
for the same initial data as in Figure~\ref{fig:mctallcigar}, can be
seen in
Figure~\ref{fig:gausstallcigar}.
Here we have once again that $\partial_0 I = \partial I = \{0,1\}$.
The discretization parameters for the scheme $(\BGNmckappa^F_{m})^{h}$
from Remark~\ref{rem:Fscheme} are $J=128$ and $\ttau = 10^{-5}$.
As a comparison, we also show the evolution for standard mean curvature
flow,
computed with the scheme $(\BGNmckappa_m)^{h}$, in 
Figure~\ref{fig:gausstallcigar}.
It was suggested by Firey, \cite{Firey74}, that surfaces of stones, which are
pounded by waves and other stones, move according to Gauss curvature flow. 
It is more likely that parts of the surface, where both principal
curvature directions are highly curved, will be hit by waves and other stones.
He hence proposed the Gauss curvature flow as the governing equation
for the evolution of the stone's surface.
In Figure~\ref{fig:gausstallcigar} it is clearly seen that
the upper and lower part, which have two highly curved principal curvature
directions, move faster within Gauss curvature flow when
compared to mean curvature flow. The parts closer to the origin have a nearly
flat principal curvature direction. Hence they move far slower under Gauss 
curvature flow than under mean curvature flow, 
as can be clearly seen in Figure~\ref{fig:gausstallcigar}.
\begin{figure}
\center
\includegraphics[angle=-90,width=0.1\textwidth]{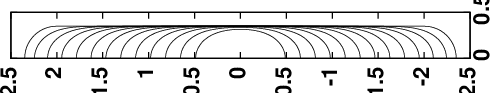}
\includegraphics[angle=-90,width=0.2\textwidth]{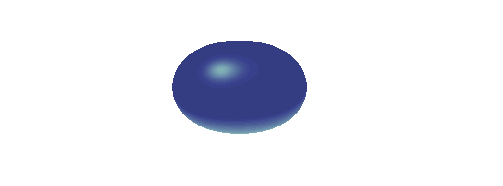}
\qquad\qquad
\includegraphics[angle=-90,width=0.1\textwidth]{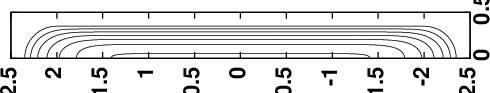}
\includegraphics[angle=-90,width=0.2\textwidth]{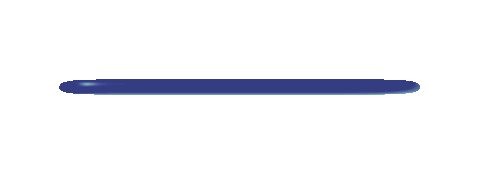}
\caption{
$(\BGNmckappa_m^{F})^h$ for (\ref{eq:Gaussflow}).
Gauss curvature flow for a cigar, on the left. Plots are at times
$t=0,0.01,\ldots,0.12$.
We also visualize the axisymmetric surface $\mathcal{S}^m$ generated by
$\Gamma^m$ at time $t=0.12$.
As a comparison, we show the evolution of $(\BGNmckappa_m)^h$
for mean curvature flow, (\ref{eq:mcfS}), on the right.
Here the plots are at times $t=0,0.01,\ldots,0.06$,
and we visualize the axisymmetric surface $\mathcal{S}^m$ generated by
$\Gamma^m$ at time $t=0.06$.
}
\label{fig:gausstallcigar}
\end{figure}%

The Gauss curvature flow is not well-defined for general hypersurfaces
as for non-convex hypersurfaces the resulting equation is not parabolic,
see \cite{Andrews00,Jeffres09}. 
In the axisymmetric situation the degrees
of freedom are reduced and the resulting equation is
\[
\vec x_t\,.\,\vec\nu =
\varkappa\,\frac{\vec\nu\,.\,\vec\ek_1}{\vec x\,.\,\vec\ek_1}
\quad\text{on }\ I\,,
\]
which is parabolic as long as ${\vec\nu\,.\,\vec\ek_1}$ is positive.
In conclusion,  even in the axisymmetric case the evolution
is not well-defined if the initial surface
has the topology of a torus, and
so we do not present results for genus 1 surfaces. 
However, although this would not be possible in the general formulation
 we can start the
Gauss curvature flow in the axisymmetric case with some nonconvex initial 
data, and we do so in the simulation in Figure~\ref{fig:gaussnonconvex2},
where we used the discretization parameters $J=128$ and $\ttau = 10^{-5}$.
For a mathematical analysis for Gauss curvature flow in the axisymmetric
case we refer to \cite{Jeffres09}.
\begin{figure}
\center
\includegraphics[angle=-90,width=0.1\textwidth]{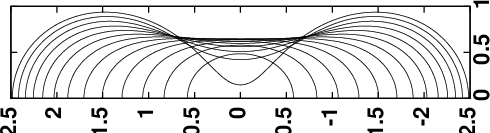} 
\qquad
\includegraphics[angle=-90,width=0.15\textwidth]{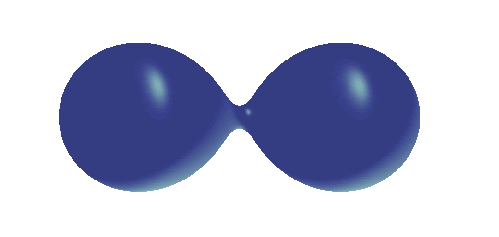} 
\includegraphics[angle=-90,width=0.15\textwidth]{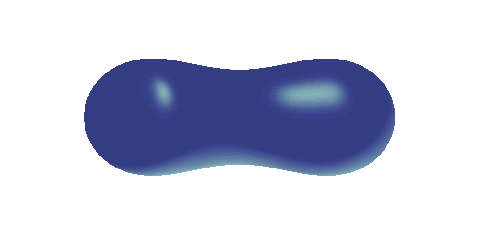} 
\includegraphics[angle=-90,width=0.15\textwidth]{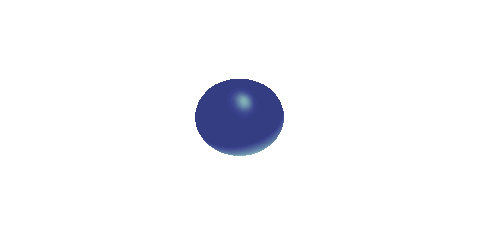} 
\caption{
$(\BGNmckappa_m^{F})^h$ for (\ref{eq:Gaussflow}). 
Gauss curvature flow for nonconvex initial data. Plots are at times
$t=0,0.05,\ldots,0.6$.
We also visualize the axisymmetric surface $\mathcal{S}^m$ generated by
$\Gamma^m$ at times $t=0$, $t=0.2$ and $t=0.6$.
} 
\label{fig:gaussnonconvex2}
\end{figure}%

\section*{Conclusions}
We have derived and analysed various numerical schemes for the parametric
approximation of axisymmetric mean curvature flow, its nonlinear and
volume conserving variants, as well as more general curvature flows.
The main fully discrete schemes to consider for standard mean curvature flow 
are $(\BGNmckappa_m)^h$ and $(\BGNmc_{m,\star})$. Here we have dismissed the
scheme $(\GDmckappa_m)^h$, as it can show oscillations in practice, 
recall Figure~\ref{fig:torusR1r05}, as well as
the scheme $(\BGNmc_{m,\star})^h$, as it can display very nonuniform meshes in
simulations where the discrete curves are attached to the $x_2$--axis,
recall Figure~\ref{fig:mcfTM} for its variant $(\BGNmc_{m,\star}^{f,V})^h$.
We also do not consider the schemes $(\GDmc_{m,\star})^{(h)}$, as they have no
advantage over $(\BGNmc_{m,\star})$ and as they can also exhibit very
nonuniform meshes.
Of the two schemes we consider, 
the scheme $(\BGNmckappa_m)^h$ is a linear scheme
that asymptotically leads to an equidistribution of
mesh points, recall Remark~\ref{rem:equid}.
In addition, even though there is no stability proof for 
$(\BGNmckappa_m)^h$, in practice the discrete energy is always monotonically
decreasing.
The scheme $(\BGNmc_{m,\star})$, on the other hand, 
is a nonlinear scheme that is unconditionally stable.
The nonlinearity is only very mild,
and so a Newton solver never takes more than 3 iterations in practice.
Moreover,
the distribution of vertices for $(\BGNmc_{m,\star})$ may be worse than for 
$(\BGNmckappa_m)^h$, but coalescence of vertices is not observed in practice.
Similar statements hold for the nonlinear variants
$(\BGNmckappa_m^f)^h$, $(\BGNmckappa_m^{f,V})^h$, $(\BGNmc_{m,\star}^f)$ and
$(\BGNmc_{m,\star}^{f,V})$, where the two conserving schemes show very good
volume conservation properties in practice.
Finally, for general curvature flows of the form (\ref{eq:Fmg}), we propose the
linear scheme $(\BGNmckappa_m^F)^h$, which asymptotically exhibits
equidistributed mesh points.

\begin{appendix}
\renewcommand{\theequation}{\Alph{section}.\arabic{equation}}
\setcounter{equation}{0}
\section{Derivation of (\ref{eq:bc})} \label{sec:A1}
Here we demonstrate that (\ref{eq:bgnnewb}) and (\ref{eq:Dziuknewb}) weakly
impose (\ref{eq:bc}). First we consider
(\ref{eq:bgnnewb}) and the case $\rho_0 = 0 \in \partial_0 I$.

We assume for almost all $t\in(0,T)$ that
$\vec x(t) \in [C^1(\overline I)]^2$ and $\varkappa_S(t) \in L^\infty(I)$.
These assumptions and (\ref{eq:xrho}) imply that 
\begin{equation} \label{eq:x1bound}
C_1\,\rho \leq |\vec x(\rho,t)\,.\,\vec\ek_1 | \leq C_2\,\rho
\qquad \forall\ \rho \in [0,\overline\rho]\,,
\end{equation}
for $\overline\rho$ sufficiently small, and for almost all $t\in(0,T)$.

Let $t \in (0,T)$.
For a fixed $\overline\rho > 0$ and $\epsilon \in (0,\overline\rho)$, we define
\begin{equation*} 
\vec\eta_\epsilon(\rho) = \begin{cases}
(\overline\rho)^{-1}\,\int_\epsilon^{\overline\rho} (\vec
x(z,t)\,.\,\vec\ek_1)^{-1}\,\vec\ek_2 \;{\rm d}z & 0 \leq \rho < \epsilon\,, \\
(\overline\rho)^{-1}\,\int_\rho^{\overline\rho} (\vec
x(z,t)\,.\,\vec\ek_1)^{-1}\,\vec\ek_2 \;{\rm d}z 
& \epsilon \leq \rho < \overline\rho\,, \\
\vec 0 & \overline\rho \leq \rho\,.
\end{cases}
\end{equation*}
It follows from (\ref{eq:x1bound}) that 
$(\vec x\,.\,\vec\ek_1)\,\vec\eta_\epsilon$ is integrable in the limit
$\epsilon \to 0$.
On choosing 
$\vec\eta = \vec\eta_\epsilon \in \Vpartial$ in (\ref{eq:bgnnewb}), we 
obtain in the limit $\epsilon \to 0$ that
\begin{equation} \label{eq:app0}
(\overline\rho)^{-1}\,
\int_0^{\overline\rho} \vec x\,.\,\vec\ek_1\,\varkappa_{\mathcal{S}}\,
\vec\ek_2\,.\,\vec\nu
\left(\int_\rho^{\overline\rho} (\vec x\,.\,\vec\ek_1)^{-1} \;{\rm d}z\right)
|\vec x_\rho| \drho
= (\overline\rho)^{-1}\,\int_0^{\overline\rho} 
\vec x_\rho\,.\,\vec\ek_2\,|\vec x_\rho|^{-1} \drho \,.
\end{equation}
Applying Fubini's theorem and noting (\ref{eq:x1bound}), as
well as the boundedness of $|\vec x_\rho|$ and $\varkappa_{\mathcal{S}}$, 
yields the existence of a constant $M$ such that
\begin{align}
& 
\left|
(\overline\rho)^{-1}\,
\int_0^{\overline\rho} \vec x\,.\,\vec\ek_1\,\varkappa_{\mathcal{S}}\,
\vec\ek_2\,.\,\vec\nu
\left(\int_\rho^{\overline\rho} (\vec x\,.\,\vec\ek_1)^{-1} \;{\rm d}z\right)
|\vec x_\rho| \drho
 \right| \nonumber \\ & \quad
= \left| (\overline\rho)^{-1}\,
\int_0^{\overline\rho} 
(\vec x\,.\,\vec\ek_1)^{-1}
\left(\int_0^z
\vec x\,.\,\vec\ek_1\,\varkappa_{\mathcal{S}}\,\vec\ek_2\,.\,\vec\nu
\,|\vec x_\rho| \drho \right) {\rm d}z\right| \nonumber \\ & \quad
\leq (\overline\rho)^{-1}\,M\,\int_0^{\overline\rho} 
z^{-1}\left(\int_0^z \rho \drho \right) {\rm d}z
= \tfrac12\,(\overline\rho)^{-1}\,M\,\int_0^{\overline\rho} z \;{\rm d}z
= \tfrac14\,M\,\overline\rho \to 0 \quad \text{as }\ \overline\rho\to0\,.
\label{eq:fubini0}
\end{align}
On the other hand, the right hand side in (\ref{eq:app0}) converges to
$(\vec x_\rho(0,t)\,.\,\vec\ek_2)\,|\vec x_\rho(0,t)|^{-1}$
as $\overline\rho\to0$, on recalling the smoothness assumptions on $\vec x$. 
Combining this with (\ref{eq:fubini0}) and 
(\ref{eq:xrho}) yields the boundary condition 
(\ref{eq:bc}) for $\rho = 0 \in \partial_0 I$. The proof for
$\rho = 1 \in \partial_0 I$ is analogous.
Finally, the proof for (\ref{eq:Dziuknewb}) is easily adapted from the above, 
on assuming that $\vec\varkappa_{\mathcal{S}}(t) \in [L^\infty(I)]^2$ for 
almost all $t\in(0,T)$.

\setcounter{equation}{0}
\section{Existence proof for $(\BGNmckappa_m^f)^h$ and 
$(\BGNmckappa_m^{f,V})^h$} \label{sec:B}
We adapt \cite[(2.12)--(2.14)]{gflows3d} to 
$(\BGNmckappa_m^f)^h$ and $(\BGNmckappa_m^{f,V})^h$. 
\begin{thm} \label{thm:B1}
Let $\vec X^m \in \Vhpartialzero$ satisfy the assumptions
$(\mathfrak A)$ and  $(\mathfrak B)^h$, and
assume that $f:(a,b)\to\bR$ with $-\infty\leq a<0<b\leq\infty$
is strictly monotonically increasing, continuous and
such that $f((a,b))=\mathbb{R}$. 
If $b = -a = \infty$,
then there exists a solution 
$(\delta\vec X^{m+1},\kappa^{m+1}) \in \Vhpartial\times V^h$
to $(\BGNmckappa_m^f)^h$ and $(\BGNmckappa_m^{f,V})^h$.
Moreover, for general $a<b$ there exists at most one solution.
\end{thm}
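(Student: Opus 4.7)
The crucial mechanism is that mass lumping in (\ref{eq:nlfda}) decouples the discrete system vertex-by-vertex in the $\kappa^{m+1}$ variable: for each node $q_j$ one obtains a scalar equation of the form
\begin{equation*}
\ttau_m^{-1}\,\delta\vec X^{m+1}(q_j)\cdot\vec\omega^m(q_j)
= f\bigl(\lambda_j\,\kappa^{m+1}(q_j)-\widehat g_j\bigr),
\end{equation*}
where $(\lambda_j,\widehat g_j)=(1,g_j)$ with $g_j=(\vec\omega^m(q_j)\cdot\vec\ek_1)/(\vec X^m(q_j)\cdot\vec\ek_1)$ for $q_j\in\overline I\setminus\partial_0 I$, and $(\lambda_j,\widehat g_j)=(2,0)$ for $q_j\in\partial_0 I$, recall (\ref{eq:calKm}) and (\ref{eq:betam}). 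Assumption $(\mathfrak A)$ ensures the mass-lumping weight is strictly positive. Since $f((a,b))=\bR$, this scalar relation can be uniquely inverted via $f^{-1}$ to yield $\kappa^{m+1}(q_j)$ as an explicit, strictly monotone function of $\delta\vec X^{m+1}(q_j)\cdot\vec\omega^m(q_j)$. Substituting back into (\ref{eq:fdb}) reduces the scheme to a single nonlinear operator equation $\mathcal F_h^f(\delta\vec X^{m+1})=\vec 0$ on $\Vhpartial$. The same reduction works for $(\BGNmckappa_m^{f,V})^h$, except that a global Lagrange-multiplier-type scalar coming from (\ref{eq:nlVfda}) is added to the right-hand side at every vertex.

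For uniqueness (valid for arbitrary $a<b$), I would take two solutions $(\delta\vec X^{(i)},\kappa^{(i)})$, form differences $\delta\vec X,\kappa$, and test the subtracted (\ref{eq:nlfda}) with $\chi=\kappa$ and the subtracted (\ref{eq:fdb}) with $\vec\eta=\delta\vec X$. Adding the two produces
\begin{equation*}
\ttau_m\Bigl(\bigl[f(a^{(1)})-f(a^{(2)})\bigr]\bigl[\kappa^{(1)}-\kappa^{(2)}\bigr],\,|\vec X^m_\rho|\Bigr)^{\!h}
+\bigl((\delta\vec X)_\rho,(\delta\vec X)_\rho|\vec X^m_\rho|^{-1}\bigr)=0,
\end{equation*}
where $a^{(i)}:=\lambda\,\kappa^{(i)}-\widehat g$ at each node. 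Since $a^{(1)}(q_j)-a^{(2)}(q_j)=\lambda_j\,(\kappa^{(1)}(q_j)-\kappa^{(2)}(q_j))$ with $\lambda_j>0$, strict monotonicity of $f$ forces each vertex summand to be non-negative and zero only when $\kappa^{(1)}(q_j)=\kappa^{(2)}(q_j)$. Hence $\kappa\equiv0$ and $\delta\vec X\equiv\vec X^c\in\bR^2$; feeding this back into the nodal form of (\ref{eq:nlfda}) gives $\vec X^c\cdot\vec\omega^m(q_j)=0$ for every $j$, so $(\mathfrak B)^h$ forces $\vec X^c=\vec 0$, exactly as in Lemma~\ref{lem:ex}.

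For existence when $b=-a=\infty$, the map $f^{-1}:\bR\to\bR$ is globally defined and continuous, making $\mathcal F_h^f$ continuous on the finite-dimensional space $\Vhpartial$, so I would apply \cite[Prop.~2.8]{Zeidler86} as in the proof of Theorem~\ref{thm:C1}. Testing $\mathcal F_h^f(\delta\vec X)$ against $\vec\eta=\delta\vec X$ and writing $r_j:=\delta\vec X(q_j)\cdot\vec\omega^m(q_j)$, monotonicity of $f^{-1}$ yields $(f^{-1}(\ttau_m^{-1}r_j)-f^{-1}(0))r_j\geq 0$, so the nonlinear contribution is bounded below by a term at worst linear in $\delta\vec X$, while the quadratic $H^1$-seminorm $((\delta\vec X)_\rho,(\delta\vec X)_\rho|\vec X^m_\rho|^{-1})$ together with the boundary data from (\ref{eq:fdb}) provide coercivity on sufficiently large spheres in $\Vhpartial$. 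This delivers $\delta\vec X^{m+1}$, and $\kappa^{m+1}$ is then recovered from the explicit vertex-wise inversion. The main obstacle I anticipate is the $(\BGNmckappa_m^{f,V})^h$ case, where the additional global correction term in (\ref{eq:nlVfda}) couples the otherwise decoupled nodal equations; this is handled by observing that the correction enters as a single $\delta\vec X$-linear functional bounded on spheres, and hence does not destroy coercivity of $\mathcal F_h^{f,V}$.
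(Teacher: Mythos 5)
Your reduction of both schemes to a single nonlinear equation for $\delta\vec X^{m+1}$ — nodal inversion of $f$ made possible by mass lumping, substitution into (\ref{eq:fdb}) — and your uniqueness argument (testing the subtracted equations with $\kappa$ and $\delta\vec X$, using strict monotonicity of $f$ to force $\kappa\equiv0$ and $\delta\vec X\equiv\vec X^c$, then $(\mathfrak B)^h$ to force $\vec X^c=\vec 0$) are correct and match the paper's proof in substance; the paper merely phrases the monotonicity step in terms of $f^{-1}$ after the elimination.

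The existence part, however, has a genuine gap at its core step, namely coercivity. The bilinear term $\bigl((\delta\vec X)_\rho,(\delta\vec X)_\rho\,|\vec X^m_\rho|^{-1}\bigr)$ is only a \emph{semi}norm: it vanishes on constant vectors, and nonzero constants belong to $\Vhpartial$ whenever $\partial_D I=\emptyset$ (e.g.\ for closed curves). On that subspace your lower bound $(f^{-1}(\ttau_m^{-1}r_j)-f^{-1}(0))\,r_j\geq 0$, i.e.\ a bound ``at worst linear'' in $\delta\vec X$, cannot dominate the linear functional $\ell^m$ built from $\vec X^m$ and the boundary data, so the sign of the pairing of $\mathcal F_h^f(\vec\eta)$ with $\vec\eta$ on large spheres is indeterminate and \cite[Prop.~2.8]{Zeidler86} cannot be invoked as you state. (The boundary terms are part of what must be dominated; they do not ``provide coercivity''.) This is exactly where the hypothesis $b=-a=\infty$ enters — not for continuity of $f^{-1}$, which holds for any $a<b$ because $f((a,b))=\bR$, but because it forces $f^{-1}(r)\to\pm\infty$ as $r\to\pm\infty$, whence the antiderivative $\Phi$ of $f^{-1}$ satisfies $\Phi(r)\geq N\,|r|-C_0(N)$ for \emph{every} $N\in\bN$, the paper's (\ref{eq:Phiassum}). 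The paper exploits this by recognising the reduced equation as the Euler--Lagrange equation of a convex functional $\mathcal J^h$, splitting a candidate $\vec\zeta$ into its weighted mean $\mint\vec\zeta$ plus a fluctuation, controlling the fluctuation by the $H^1$ seminorm, and using assumption $(\mathfrak B)^h$ — which your existence sketch never invokes, although it is indispensable there — to guarantee that the nodal quantities $|\mint\vec\zeta\,.\,\vec\omega^m(q_j)|$ control $|\mint\vec\zeta|$; taking $N$ large enough then lets the superlinear growth of $\Phi$ beat $\ell^m$ in the constant directions. Without this mechanism, or an equivalent estimate in your formulation, the existence argument does not close. The volume-preserving case is otherwise as harmless as you suggest: the correction in (\ref{eq:nlVfda}) is evaluated at the old time level, so it is a known constant shift $g_1^m$ inside $f^{-1}$, absorbed by a fixed translation of the unknown.
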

\begin{proof}
Let $f^{-1} : \bR \to (a,b)$ denote the inverse of $f$.
It follows from (\ref{eq:nlfda}) and (\ref{eq:nlVfda}), 
on recalling (\ref{eq:betam}), that
\begin{equation} \label{eq:kappafinv}
\lambda(q_j)\,\kappa^{m+1}(q_j) = 
f^{-1} \left[\left(\frac{\delta\vec X^{m+1}}{\ttau_m} \,.\,\vec\omega^m
\right) (q_j) + g_1^m \right] + g_0^m(q_j)
\qquad j = 0,\ldots,J\,,
\end{equation}
where $g_1^m \in\bR$ and $g_0^m \in W^h_{\partial_0} \subset V^h$ 
are independent of $\delta\vec X^{m+1}$ and $\kappa^{m+1}$. 
In particular, $g_1^m = 0$ for $(\BGNmckappa_m^f)^h$.
Substituting (\ref{eq:kappafinv}) into (\ref{eq:fdb}) yields 
\begin{align}
& \left(\lambda^{-1}\,
f^{-1} \left(\frac{\delta\vec X^{m+1}\,.\,\vec\omega^m}{\ttau_m} 
+ g_1^m \right) , \vec\omega^m\,.\,\vec\eta\,|\vec X^m_\rho|\right)^h
+ \left(\delta\vec X^{m+1}_\rho, \vec\eta_\rho\,|\vec X^m_\rho|^{-1}\right) 
= \ell^m(\vec\eta) 
\nonumber \\ & \hspace{11cm}
\qquad \forall\ \vec\eta \in \Vhpartial\,,
\label{eq:Xfinv}
\end{align}
where $\ell^m : \Vh \to \bR$ is a linear functional defined by
\begin{equation*} 
\ell^m(\vec\eta) = 
- \left(\vec X^{m}_\rho, \vec\eta_\rho\,|\vec X^m_\rho|^{-1}\right) 
- \left(g_0^m\,\vec\omega^m, \vec\eta\,|\vec X^m_\rho|\right)^h
- \sum_{i=1}^2 
 \sum_{p \in \partial_i I} \sliprho^{(p)}\,\vec\eta(p)\,.\,\vec\ek_{3-i}\,.
\end{equation*}
It follows that (\ref{eq:Xfinv}) is the Euler--Lagrange variation of the
minimization problem:
\begin{subequations}
\begin{align}
& \min_{\vec{\eta} \in \Vhpartial}
\mathcal{J}^h(\vec\eta)\,,\label{eq:minJ} \\ 
& \mathcal{J}^h(\vec\eta) :=
 \tfrac12 \left(|\vec\eta_\rho|^2, |\vec X^m_\rho|^{-1}\right) 
+ \ttau_m\,\left( \lambda^{-1}\, \Phi 
\left(\frac{\vec\eta\,.\,\vec\omega^m}{\ttau_m} + g_1^m \right) ,
|\vec X^m_\rho|\right)^h - \ell^m(\vec\eta)\,,
\label{eq:calJ}
\end{align}
\end{subequations}
where $\Phi \in C^1(\bR)$ 
denotes an antiderivative of $f^{-1}$. We note that $\Phi:
\bR\to\bR$ is strictly convex with $\Phi'(f(0))=f^{-1}(f(0))=0$ and 
hence we obtain that $\Phi$ is bounded from below and is coercive.

In the following we establish that the continuous functional 
$\mathcal{J}^h : \Vhpartial\to\bR$
is coercive, i.e.\ that $\mathcal{J}^h(\vec\eta) \to \infty$ as 
$\|\vec\eta\| \to \infty$,
where $\|\cdot\|$ is a fixed norm on $\Vh$.
The main task is to bound the growth of the
linear term $\ell^m(\vec\eta)$ in terms of the first two terms in 
(\ref{eq:calJ}). 

If $b = -a = \infty$, it is possible to show that for all $N \in \bN$ there
exists a positive constant $C_0(N)$ such that
\begin{equation} \label{eq:Phiassum}
\Phi(r) \geq N\,|r| - C_0(N) \qquad \forall\ r \in \bR\,.
\end{equation}
To see this, for $N\in\bN$ choose an $R\in\bR$ such that
$\min\{f(R),-f(-R)\} \geq N$, and define 
$\|f\|_{\infty,R} := \max_{q\in[-R,R]} |f(q)|$. On assuming
without loss of generality that $\Phi(r) = \int_0^r f(q)\; {\rm d}q$, it
holds for $r > R$ that
\begin{equation} \label{eq:HG1}
\Phi(r) = \int_0^{R} f(q) \; {\rm d}q + \int_{R}^r f(q) \; {\rm d}q
\geq - R\,\|f\|_{\infty,R} + (r - R)\,N 
= r\,N - (\|f\|_{\infty,R} + N)\,R\,.
\end{equation}
In addition, for $r \in [0,R]$ it trivially holds that
\begin{equation} \label{eq:HG2}
\Phi(r) \geq - R\,\|f\|_{\infty,R} + r\,N - r\,N
\geq r\,N - (\|f\|_{\infty,R} + N)\,R\,.
\end{equation}
Combining (\ref{eq:HG1}) and (\ref{eq:HG2}) yields (\ref{eq:Phiassum}) 
for $r \geq 0$. The case $r\leq0$ can be treated analogously.

Given $\vec\eta \in \Vhpartial$, we define 
$\vec\zeta = \vec\eta + \vec{\mathfrak f}^m \in \Vh$ with
$\vec{\mathfrak f}^m = \ttau_m\,g_1^m\,\vec\pi^h[
|\vec\omega^m|^{-2}\,\vec\omega^m] \in \Vh$.
Then it holds for every $N\in\bN$ that
\begin{align} 
\mathcal{J}^h(\vec\eta) & = \mathcal{J}^h(\vec\zeta - \vec{\mathfrak f}^m) 
\nonumber \\ & =
\tfrac12 \left(|\vec\zeta_\rho - \vec{\mathfrak f}^m_\rho|^2, 
|\vec X^m_\rho|^{-1}\right) 
+ \ttau_m\,\left( \lambda^{-1}\, \Phi 
\left(\frac{\vec\zeta\,.\,\vec\omega^m}{\ttau_m} \right) ,
|\vec X^m_\rho|\right)^h - \ell^m(\vec\zeta - \vec{\mathfrak f}^m)
\nonumber \\ & \geq
\tfrac14 \left(|\vec\zeta_\rho|^2, |\vec X^m_\rho|^{-1}\right) 
+  N \left( \lambda^{-1}\, |\vec\zeta\,.\,\vec\omega^m| ,
|\vec X^m_\rho|\right)^h  - \ell^m(\vec\zeta) 
- C_1(N) \,,
\label{eq:Jbound}
\end{align}
where, here and throughout, constants of the form $C_i$ are independent of
$\vec\zeta$, but may depend on the data $\vec X^m$, $\vec{\mathfrak f}^m$ etc.
Similarly, constants of the form $C_i(N)$ may also depend on $N$,
recall (\ref{eq:Phiassum}), but are independent of $\vec\zeta$. On defining 
$\mints\eta = \frac{(\eta , |\vec X^m_\rho|)}{(1, |\vec X^m_\rho|)}$, 
and extending the definition to vector valued functions, it
follows from (\ref{eq:Jbound}) that
\begin{align}
\mathcal{J}^h(\vec\eta) & \geq
\tfrac14 \left(|\vec\zeta_\rho|^2, |\vec X^m_\rho|^{-1}\right) 
+  N \left( \lambda^{-1}\, |\mint\vec\zeta\,.\,\vec\omega^m| ,
|\vec X^m_\rho|\right)^h  
\nonumber \\ & \qquad 
- N \left( \lambda^{-1}\, |(\vec\zeta - \mint\vec\zeta)\,.\,\vec\omega^m| ,
|\vec X^m_\rho|\right)^h 
- C_2\,\|\vec\zeta - \mint\vec\zeta\| - C_2\,\|\mint\vec\zeta\| - C_1(N)
\nonumber \\ & =
I + II - III - IV - V - C_1(N)\,.
\label{eq:Jbound2}
\end{align}
It remains to bound $-III-IV-V$ from below.
We have from the assumption $(\mathfrak B)^h$ that 
$II - V \geq N\,C_3\,|\mint\vec\zeta| -
C_4\,|\mint\vec\zeta|$. Choosing $N \geq 2\,C_4 / C_3$ implies that
\begin{equation} \label{eq:bound25}
II-V \geq C_4\,|\mint\vec\zeta|\,.
\end{equation}
In addition, it holds  that
\begin{align}
III + IV & \leq (N+1)\,C_5\,\|\vec\zeta - \mint\vec\zeta\| 
\leq (N+1)\,C_6\,\|\vec\zeta_\rho\| \nonumber \\ &
\leq (N+1)\,\delta\,\left(|\vec\zeta_\rho|^2, |\vec X^m_\rho|^{-1}\right) 
+ C_7(N,\delta) \qquad \forall\ \delta \in (0,\infty)\,.
\label{eq:bound34}
\end{align}
Choosing $(N+1)\,\delta \leq \tfrac18$ in (\ref{eq:bound34}) and combining with
(\ref{eq:Jbound2}) and (\ref{eq:bound25}) implies that
\begin{align*} 
\mathcal{J}^h(\vec\eta) & \geq \tfrac18 
\left(|\vec\zeta_\rho|^2, |\vec X^m_\rho|^{-1}\right) 
+ C_4\,|\mint\vec\zeta| - C_8(N) \nonumber \\ &
\geq C_9\,\|\vec\zeta\| - C_{10}(N) \geq C_9\,\|\vec\eta\| - C_{11}(N)
\,,
\end{align*}
which proves the coercivity of $\mathcal{J}^h(\vec\eta)$.

We now consider the uniqueness of a solution to (\ref{eq:Xfinv}). Let
$\delta\vec X^{(i)} \in \Vhpartial$, $i=1,2$ 
be two solutions to (\ref{eq:Xfinv}). Then they satisfy
\begin{align}
& \left( f^{-1} \left(\frac{\delta\vec X^{(1)}\,.\,\vec\omega^m}{\ttau_m} 
+ g_1^m \right) 
- 
f^{-1} \left(\frac{\delta\vec X^{(2)}\,.\,\vec\omega^m}{\ttau_m} 
+ g_1^m \right), \lambda^{-1}\,\vec\omega^m\,.\,[\delta\vec X^{(1)} -\delta\vec X^{(2)}]
\,|\vec X^m_\rho|\right)^h
\nonumber \\ & \quad
+ \left(|[\delta\vec X^{(1)} -\delta\vec X^{(2)}]_\rho|^2, 
|\vec X^m_\rho|^{-1}\right) = 0\,.
\label{eq:X12finv}
\end{align}
As $f^{-1}$ is strictly monotonically increasing it immediately follows from
(\ref{eq:X12finv}) that $\delta\vec X^{(1)} -\delta\vec X^{(2)} = \vec X^c \in
\bR^2$, and hence, on recalling (\ref{eq:ip0}), that
\begin{align*}
& \left(f^{-1} \left[\left(\frac{\delta\vec X^{(1)}\,.\,\vec\omega^m}{\ttau_m}
\right)(q_j) + g_1^m \right]
-  f^{-1} \left[ \left(\frac{\delta\vec X^{(2)}\,.\,\vec\omega^m}{\ttau_m} 
\right)(q_j) + g_1^m \right] \right)
\vec X^c\,.\,\vec\omega^m(q_j) = 0 \nonumber \\ &
\hspace{10cm} \qquad \forall\ j = 0,\ldots,J\,. 
\end{align*}
Now the strict monotonicity of $f^{-1}$ implies that 
$\vec X^c\,.\,\vec\omega^m(q_j) = 0$ for all $j=0,\ldots,J$,
and so the assumption $(\mathfrak B)^h$ yields that $\vec X^c=\vec0$.
This shows the uniqueness of a solution to 
$(\BGNmckappa_m^f)^h$ and $(\BGNmckappa_m^{f,V})^h$. 
\end{proof}

Theorem~\ref{thm:B1} yields existence of a unique solution for the schemes
$(\BGNmckappa_m^f)^h$ and $(\BGNmckappa_m^{f,V})^h$ in the case
(\ref{eq:fbeta}). For the case (\ref{eq:fimcf}) we only obtain uniqueness of a
solution.

\end{appendix}

\section*{Acknowledgements}
The authors gratefully acknowledge the support 
of the Regensburger Universit\"atsstiftung Hans Vielberth.

\begin{spacing}{0.94}
\small

\end{spacing}

\end{document}